\documentclass[letterpaper,10pt]{article}
%%%%%%%%%%%%%%%%%%%%%%%%%%%%%%%%%%%%%%%%%%%%%%%%%%%%%%%%%%%%%%%%%%%%%%%%%%%%%%%%%%%%%%%%%%%%%%%%%%%%%%%%%%%%%%%%%%%%%%%%%%%%%%%%%%%%%%%%%%%%%%%%%%%%%%%%%%%%%%%%%%%%%%%%%%%%%%%%%%%%%%%%%%%%%%%%%%%%%%%%%%%%%%%%%%%%%%%%%%%%%%%%%%%%%%%%%%%%%%%%%%%%%%%%%%%%
%\usepackage{natbib}
\usepackage{amsfonts}
\usepackage{amssymb, amsmath, amsthm,  graphicx}
\usepackage[margin=1in]{geometry}
\usepackage[bottom]{footmisc}
\usepackage{commath}
\usepackage[utf8]{inputenc}
\usepackage{natbib}
\usepackage{booktabs, multirow}
\usepackage{url}

\setcounter{MaxMatrixCols}{10}

\renewcommand{\baselinestretch}{1.5}

\DeclareMathOperator*{\argmin}{argmin}

\newtheorem{thm}{Theorem}

\newtheorem{lemma}{Lemma}

\newtheorem{assum}{Assumption}
\newtheorem{remark}{Remark}
\newtheorem{definition}{Definition}
\begin{document}

\title{High Dimensional Linear GMM 
}
\author{\textsc{Mehmet Caner\thanks{%
Ohio State University, 452 Arps Hall, Department of Economics, Translational Data Analytics, Department of Statistics, OH 43210. Email:caner.12@osu.edu.}} \and \textsc{Anders Bredahl Kock\thanks{%
University of Oxford, Aarhus University and CREATES, Department of Economics. Email: anders.kock@economics.ox.ac.uk. The authors would like to thank Yanqin Fan, Eva Janssens, Adam McCloskey and Jing Tao for helpful comments and suggestions.}}
}
\date{\today}

\maketitle

\begin{abstract}
This paper proposes a desparsified GMM estimator for estimating high-dimensional regression models allowing for, but not requiring, many more endogenous regressors than observations. We provide finite sample upper bounds on the estimation error of our estimator and show how asymptotically uniformly valid inference can be conducted in the presence of conditionally heteroskedastic error terms. We do not require the projection of the endogenous variables onto the linear span of the instruments to be sparse; that is we do not impose the instruments to be sparse for our inferential procedure to be asymptotically valid. Furthermore, the variables of the model are not required to be sub-gaussian and we also explain how our results carry over to the classic linear dynamic panel data model. Simulations show that our estimator has a low mean square error and does well in terms of size and power of the tests constructed based on the estimator. 

\vspace{0.2cm}

\noindent\textit{Keywords:} GMM, Desparsification, Uniform inference, High-dimensional models, Linear regression, Dynamic panel data.

%In this paper we provide a GMM estimator that is designed for high dimensional case. This means our estimator is operational  when the number of instruments and the number of endogenous regressors  are  larger than the sample size.  We also allow all instruments to be used in estimation, hence avoiding sparsity in the number of instruments. 
%This is done in linear context. In this sense, we also  handle heteroskedastic  and  non-sub Gaussian data. To do that we benefit from concentration inequalities recently proposed in probability literature. We also provide a new approximate inverse for GMM variance term to prevent singularity, again without assuming sparsity in the precision matrix.  With our  new estimator in hand, we also show that we can build asymptotically uniform confidence regions for parameter of interest, and conduct inference. 
%Tuning parameter choice is also analyzed.  We also have simulation exercise   that shows good size, power, of the test we propose under different scenarios, and our new estimator has very good MSE.

\end{abstract}

\section{Introduction}

GMM is one of the cornerstones of modern econometrics. It has been used to link economic theories to estimation of structural parameters as well as testing. It has also infused other fields such as finance, marketing and accounting. The popularity of GMM stems furthermore from its role in instrumental variable estimation in the presence of endogenous regressors. 

Until recently, the validity of GMM based inference had only been established in asymptotic regimes with a fixed number of instruments and endogenous regressors as sample size tended to infinity. For example, \cite{c09} proposed a Bridge type of penalty on the GMM estimator with a fixed number of parameters and analyzed its model selection properties. Furthermore, \cite{cf15} used an adaptive Lasso type penalty to select instruments --- again this was done in a setting with a fixed number of instruments. 

The setting of an increasing number of endogenous variables was analyzed by \cite{cz14}. These authors considered the adaptive elastic net penalty and studied estimation and variable selection consistency. Next, \cite{chl18} proposed an adaptive elastic net based estimator which can simultaneously select the model and the valid instruments while at the same time estimating the structural parameters. However, the asymptotic framework in all of the above papers is pointwise and the sample size is always larger than the number of instruments (albeit this is allowed to diverge with the sample size).

In a seminal paper, \cite{bcch12} proposed a heteroskedasticity robust procedure for inference in IV estimation valid in settings with many more instruments than observations. Their results do not rely on the data being sub-gaussian, making it useful for many economic applications. \cite{gt14} also consider high-dimensional instrumental variable estimation. %Their analysis relies on finding upper bounds on the sampling errors for the  structural coefficients and making these feasible. 
Furthermore, \cite{bch14} developed the first uniformly valid confidence intervals for the treatment coefficient in the presence of a high dimensional vector of control variables.

Recently, \cite{z15, z18} introduced new oracle inequalities for high dimensional two stage least squares estimators. Based on that, work \cite{glt18} proposd a debiased version of a lasso based two stage least squares estimator with sub gaussian data, and homoskedastic errors. \cite{nnll15} also considered estimating equations and confidence regions. Simultaneously with \cite{glt18},  \cite{BCHN} proposed a new instrumental variable estimator satisfying empirical orthogonality conditions in high dimensions in the presence of heteroskedastic data. A useful feature of their approach is that it does not involve tuning parameters.  Another relevant paper is by \cite{belloni19}, which provides a new way of handling linear and nonlinear instrumental variables regression as well as relaxing the sparsity assumption. Under some high level assumptions, they provide general results. They introduce double/de-biased regularized GMM which starts with a Dantzig estimator for structural parameter estimation and debiases that again by means of Dantzig based estimation.  Furthermore, they provide a result for homoskedastic linear instrumental variable estimation under primitive conditions. %In their Theorem 3.3, Assumption 5 they assume that sum of the $l_1$ norm of the structural parameters, plus maximum row sum of the inverse of the variance of the standard GMM are constant.The general results are over rectangles in $R^p$. 
The limit of their estimator is standard normal. A further related paper is by \cite{breunig18}. They use the lasso as the first step estimator and debias it twice  through nodewise regressions.  They provide inferential results when the variables are subgaussian, and approximate sparsity conditions on the structural parameters, on the inverse of the standard GMM variance, as well as on another matrix related to combination of the moments, are holding simultaneously. They were able to provide inferential results for linear combination of parameters and also consider the case of weak identification.  Note that we are able to avoid using subgaussian data due to \cite{cck16}.

%Very recently, \cite{z18} introduced new oracle inequalities for high-dimensional two-stage least squares estimators. Based on that work, \cite{glt18} have proposed a debiased version of a Lasso-based two-stage least squares estimator. Assuming sub-gaussian data, sparse instruments and homoskedastic errors, they develop oracle inequalities and asymptotically valid confidence intervals. This is a very creative way of handling an important problem in the IV literature. %\cite{nnll15} also look at estimation equations in the statistical literature. 
%Simultaneously with \cite{glt18}, \cite{BCHN} have proposed a new instrumental variable estimator for high dimensional models with sparse instruments. This estimator is based on empirical orthogonality conditions and can handle heteroskedastic data. Furthermore, their approach is very appealing as no tuning parameter selection is needed. Finally, the work of \cite{belloni2018highdim} considers GMM estimation based on the Dantzig selector in large models with many endogenous regressors. Their tests and confidence intervals, like ours, are based on desparsification. 
 
Our approach is based on debiasing a two-step Lasso-GMM estimator. Thus, our estimator is related to \cite{van2014} who proposed a desparsified Lasso estimator and established that the confidence intervals based on it are asymptotically uniformly valid. Simultaneously, similar advancements were made in the papers by \cite{jm14} and \cite{zz14}. \cite{ck18} proposed debiasing the conservative Lasso in the context of a plain linear regression model without endogenous covariates and showed how it can be used to construct uniformly valid confidence intervals in the presence of heteroskedasticity. In addition, the asymptotic inference can simultaneously involve a large number of coefficients.

This paper proposes a high dimensional penalized GMM estimator where the number of instruments and explanatory variables are both allowed, yet not required, to be larger than the sample size. We do not impose sparsity on the instruments; that is, we do not require only a small subset of the instruments to be valid. While we develop the theory in the context of cross sectional data, we also explain how the theory is valid in dynamic panel data models upon taking first differences. The error terms are allowed to be heteroskedastic conditionally on the instruments. Our approach does not impose the data to be sub-gaussian as we benefit from concentration inequalities by \cite{cck16}. For debiasing our estimator we need an approximate inverse of a certain singular sample covariance matrix, cf Section \ref{sec:desp}. Our construction of this approximate inverse relies on the CLIME estimator of \cite{cai11}. Uniformly valid confidence intervals for the debiased estimator are developed. The tuning parameter present is chosen by cross-validation. Finally, the finite sample properties of our estimator are investigated through simulations and we compare it to the estimator in \cite{glt18}. In the presence of heteroskedasticity, our estimator performs very well in terms of size, power, length of the confidence interval and mean square error. 
   
Section \ref{model} introduces the model and estimator. Next, Section \ref{assumpt} lays out the used assumptions and and oracle inequality for the penalized two-step GMM estimator.  Section \ref{sec:desp1} develops the approximate inverse used for debiasing the penalized two-step GMM estimator. Section \ref{inferen} establishes the asymptotically uniform validity of our inference procedure and Section \ref{tuning} explains the tuning parameter choice.  Finally, the Monte Carlo simulations are contained in Section \ref{mc}.

\section{Notation, model and estimator}\label{model}

Prior to introducing the model and the ensuing inference problem we present notation used throughout the paper.

\subsection{Notation}
For any $x\in \mathbb{R}^n$, let $\enVert[0]{x}_1, \enVert[0]{x}_2$ and $\enVert[0]{x}_\infty$ denote its $l_1$-, $l_2$-, and the $l_\infty$-norm, respectively. Also, we shall let $\enVert[0]{x}_{l_0}$ be the $l_0$-``norm'' counting the number of non-zero entries in $x$. For an $m\times n$ matrix $A$, we define $\enVert[0]{A}_\infty=\max_{1\leq i\leq m, 1\leq j\leq n}|A_{ij}|$. $\enVert[0]{A}_{l_\infty}=\max_{1\leq i\leq m}\sum_{j=1}^n|A_{ij}|$ denotes the induced $l_\infty$-norm of $A$. Similarly, $\enVert[0]{A}_{l_1}=\max_{1\leq j\leq n}\sum_{i=1}^m|A_{ij}|$ denotes the induced $l_1$-norm.
For any symmetric matrix $B$, let $\ Eigmin (B)$ and $Eigmax (B)$ denote the smallest and largest eigenvalues of $B$, respectively. For $S\subseteq\cbr[0]{1,...,n}$, we let $x_S$ be the modification of $x$ that places zeros in all entries of $x$ whose index does not belong to $S$. $|S|$ denotes the cardinality of $S$. For any $n\times n$ matrix $C$ let $C_S$ denote the $|S|\times |S|$ submatrix of $C$ consisting only of the rows and columns indexed by $S$. %If $S=\cbr[0]{j}$ for some $j\in\cbr[0]{1,...,n}$, we use $C_j$ as shorthand for the $j$'th diagonal element of $C$. 
$diag(x)$ denotes the diagonal matrix having $x_j$ as its $j$ diagonal element.
$e_j$ will denote the $j$th canonical basis vector for $\mathbb{R}^n$. $\stackrel{d}{\to}$  indicates convergence in distribution. %and $o_p(a_n)$ as well as $O_p(b_n)$ are used in their usual meaning for sequences $a_n$ and $b_n$. $a_n\asymp b_n$ means that these sequences differ at most by multiplicative constants.

\subsection{The Model}
We consider the linear model
\begin{align}
Y=X\beta _{0}+u\label{eq:mod},
\end{align}
where $X$ is the $n\times p$ matrix of potentially endogenous explanatory variables and $u$ is an $n\times 1$ vector of error terms. $\beta _{0}$ is the $p\times 1$ population vector of coefficients, which we shall assume to be sparse. Thus, $Y$ is $n\times 1$.  However, the location of the non-zero coefficients is unknown. Let $S_0=\cbr[0]{j:\beta_{0j}\neq 0}$ denote the set of relevant regressors and $s_0=|S_0|$ their cardinality. In this paper we study the high-dimensional case where $p$ is much greater than $n$ but our results actually only require $n\to \infty$ --- thus $p\leq n$ is covered as well. All regressors are allowed to be endogenous but are not required to be. In particular, this means that upon taking first differences, the classic linear dynamic panel data model can be cast in our framework. We provide more details on this in Section \ref{sec:dynpan} of the supplementary appendix.
%**********
%
%We assume that  all explanatory variables are endogenous and precise assumptions will be made in Assumption 1 below. 
%
%**********
%
%All explanatory variables are allowed to be endogenous and precise assumptions will be made in Assumption 1 below. 

We assume that $q$ instruments are available and let $Z$ denote the $n\times q$ matrix of instruments. Exogenous variables can instrument themselves as usual. The regime under investigation is $q \ge p>n$ where there are many instruments and regressors compared to the sample size. However, our results can easily be adapted to any regime of orderings and growth rates of $p,\ q$ and $n$ (as long as we have at our disposal at least as many instruments as endogenous variables, i.e. $q\geq p$) \footnote{For details on arbitrary growth rates of $p,\ q$ and $n$ we refer to Remark \ref{rem:genres} in the beginning of the appendix.}. Letting $X_i$  and $Z_i$ denote the $i$th row of $X$ and $Z$, respectively, $i=1,...,n$, written as column vectors, we assume that
\begin{align}
E Z_i u_i = 0, \label{eq:GMMcond}
\end{align}
for all $i=1,..., n$ amounting to the the instruments being uncorrelated with the error terms.

The goal of this paper is to construct valid tests and confidence intervals for the entries of $\beta_0$. We do not impose that the columns of $A$ are sparse in a first step equation of the type $X=ZA+\epsilon$ (put differently, the $L_2$-projection of the covariates on the linear span of the instruments is not assumed to be sparse) and also allow $u_i$ to be heteroskedastic conditionally on $Z_i$ for each $i=1,...,n$. In addition, we do not impose the random variables in the model (\ref{eq:mod}) to be sub-gaussian. %MEHMET: Have I forgotten any contributions/differences to Gold et al and literature

 Based on (\ref{eq:GMMcond}), we propose the following penalized \emph{first-step} Lasso GMM estimator.
\begin{equation}
\hat{\beta}_F = \argmin_{\beta \in \mathbb{R}^p} \left[ \frac{(Y - X \beta)'ZZ'(Y - X \beta)}{n^2q} + 2 \lambda_n \|\beta \|_1\right],\label{eq:FSGMM}
\end{equation}
where $\lambda_n$ is a positive tuning parameter sequence defined in (\ref{eq:lambda_def}) in the appendix. While we shall later see that this estimator is consistent under suitable regularity conditions, the main focus of this paper is a generalization of the classic GMM estimator to the high-dimensional setting, allowing for a $q\times q$ weight matrix $\hat{W}_d=diag(1/\hat{\sigma}_1^2,...,1/\hat{\sigma}_q^2)$ with $\hat{\sigma}_l^2 = \frac{\sum_{i=1}^n Z_{il}^2 \hat{u}_i^2}{n}$ and $\hat{u}_i = Y_i - X_i' \hat{\beta}_F$. This \emph{two-step} Lasso GMM estimator is defined as
\begin{align}
\hat{\beta} = \argmin_{\beta \in \mathbb{R}^p} \left[ \frac{(Y - X \beta)'Z}{n} \frac{\hat{W}_d}{q} \frac{Z'(Y - X \beta)}{n} + 2 \lambda_n^* \|\beta \|_1\right].\label{eq:1}
\end{align}
For the two-step GMM estimator we shall use $\lambda_n^*$ as defined in (\ref{eq:lambdastar_def}) in the appendix. While the exact form of $\lambda_n^*$ is rather involved we note that under Assumption \ref{2} below one has that $\lambda_n^*=O(\sqrt{\ln q/n})$. 

\begin{remark}\label{rem:generalW}
Although we focus on the case of a diagonal weight matrix $\hat{W}_d$, it is worth mentioning that our results can be shown to remain valid in case of a general weight matrix $\hat{W}$ if there exists (a sequence of) non-random matrices $W$ such that
\begin{align*}
\| \hat{W} - W \|_{l_{\infty}} = o_p (1) \quad \text{and} \quad  \| W \|_{l_{\infty}} \le C < \infty.
\end{align*}
for some universal $C>0$. However, since $W$ is $q\times q$, assuming it to have uniformly bounded $l_\infty$-norm is restrictive. Thus, even though $\| \hat{W} - W \|_{l_{\infty}} = o_p (1)$ and $\| W \|_{l_{\infty}} \le C < \infty$ can be relaxed at the expense of strengthening some of our other assumptions, we shall focus on the case of a diagonal weight matrix as this is enough to handle conditionally heteroskedastic error terms.

Note also that the classic choice of weighting matrix in low-dimensional GMM, $\hat{W} = [ n^{-1} \sum_{i=1}^n Z_i Z_i' \hat{u}_i^2 ]^{-1}$, is not applicable since it is not well-defined for $q > n$ due to the reduced rank of $n^{-1} \sum_{i=1}^n Z_i Z_i' \hat{u}_i^2 $. 
\end{remark}

\section{Assumptions and oracle inequalities}\label{assumpt}
Throughout we assume that $X_i, Z_i$ and $u_i$ are independently and identically distributed across $i=1,...,n$. Before stating our first assumption, we introduce the following notation. First, let 
\begin{equation}
\Sigma_{xz} = E X_1 Z_1',\label{sxz}
\end{equation}
and, with $\sigma_l^2 =  E  Z_{1l}^2 u_1^2$, $l=1,...,q$, set 
\begin{align}
W_d
=
diag(1/\sigma_1^2,...,1/\sigma_q^2) \label{wd}.
\end{align}
%\begin{equation}
%W_d = \left[ \begin{array}{ccc}
%				1/\sigma_1^2 & 0  & 0 \\
%				\hdots  & \hdots  & \hdots  \\
%				0 & 1/\sigma_l^2 &  0 \\
%				\hdots & \hdots & \hdots \\
%				0 & 0 & 1/\sigma_q^2 \end{array} \right],\label{wd}
%\end{equation}
%where $\sigma_l^2 =  E  Z_{il}^2 u_i^2$.
Next,
define the population adaptive restricted eigenvalue of $\Sigma_{xz} W_d \Sigma_{xz}'$
\begin{equation}
\phi_{\Sigma_{xzw}}^2 (s) = \min 
\cbr[3]{  \frac{\delta' (\Sigma_{xz} W_d \Sigma_{xz}') \delta}{q \|\delta_S\|_2^2}:\delta \in \mathbb{R}^p  \setminus \cbr[0]{0},\ \| \delta_{S^c} \|_1 \le 3 \sqrt{s} \|\delta_S\|_2,\ |S|\leq s}\label{pev1}
\end{equation}
which is the relevant extension of the classic adaptive restricted eigenvalue from the linear regression model with exogenous regressors (which only involves $E(X_1X_1')$). Verifying that the sample counterpart of (\ref{pev1}) is bounded away from zero, which is an important step in establishing the oracle inequalities in Theorem \ref{thm1} below, becomes more challenging than in the classic setting, cf. Lemma \ref{evalue2} in the Appendix.

%Specifically, adaptive restricted eigenvalue condition in GMM have $\Sigma_{xz} W_d \Sigma_{xz}'$ in the numerator compared to the least squares case where we have $E X_1 X_1`$. Hence we need to normalize by the number of instruments $q$ in GMM case unlike the adaptive restricted eigenvalue condition in p.162 of  \cite{bvdg2011}. The term "adaptive" comes from the fact that in the restricted set we switch from $l_1$ to $l_2$ norm, hence adapting to $l_2$ norm in the proofs. In restricted eigenvalue condition on p.161-162 of \cite{bvdg2011}, the restricted set only involves $l_1$ norm.
%This is new and takes into  account number of instruments. This extends adaptive restricted eigenvalue notion that is used in least squares in statistics-econometrics to GMM context with many instruments. 
\begin{assum}\label{1}
%The regressors $X_i$, and the instruments $Z_i$ are identically and independently distributed across $i=1,2\cdots,n$. The errors are also identical-independent across $i=1,2,\cdots,n$, with
Assume that $EZ_1 u_1=0$. Furthermore, $\max_{1 \le j \le p} E |X_{1j} |^{r_x}$, 
 $\max_{1 \le l \le q } E |Z_{1l}|^{r_z}$, and 
$ E |u_1|^{r_u}$ are uniformly bounded from above (over $n$) for $r_z, r_x, r_u \ge 4$. Finally, $\phi_{\Sigma_{xzw}}^2 (s_0)$ and $\min_{1 \le l \le q} \sigma_l^2= \min_{1 \le l \le q}  E Z_{1l}^2 u_1^2$ are bounded away from zero uniformly over $n$.
\end{assum}
Note that Assumption \ref{1} does not impose sub-gaussianity of the random variables. Assumption \ref{1} is used to establish the oracle inequality in Theorem \ref{thm1} below, which in turn plays an important role for proving the asymptotic gaussianity of the (properly centered and scaled) desparsified two-step GMM estimator. Furthermore, Assumption \ref{1} does not require $\Sigma_{xz} W_d \Sigma_{xz}'/q$ to be full rank.  In other words, we allow for ill-posedness due to many endogenous regressors and instruments.
Thus, we allow for some of the instruments to be weakly correlated with the explanatory variables.

%Assumption 1 allows  for bounded moments in data, so it is unlike the statistical literature that assumes bounded or sub-gaussian data. Assumption 1 is needed for the oracle inequality in the first step GMM estimator  and other proofs when we assume $\hat{W}=I_q$. Assumption \ref{1} is impor

While Assumption \ref{1} imposes restrictions for each $n\in\mathbb{N}$, the following assumption only imposes asymptotic restrictions. It restricts the growth rate of the moments of certain maxima as well as the number of non-zero entires of $\beta_0$, i.e. $s_0$. Prior to stating the assumption, we introduce the following maxima.

%To get the oracle inequality for the two-step estimator for GMM with diagonal weight, in addition to Assumption 1, we need the following sparsity assumption. This assumption is used in Lemma \ref{l2} Remark, and needed for consistent estimation of elements in the diagonal weight matrix.

%Before the assumption, we need the following definitions that will be used in getting rates for maximal moments of several functions. Define
\begin{definition}
\[ M_1 = \max_{1 \le i \le n} \max_{1 \le l \le q} | Z_{il} u_i |,\]
\[ M_2 = \max_{ 1 \le i \le n} \max_{1 \le j \le p} \max_{1 \le l \le q} |Z_{il} X_{ij}- E Z_{il} X_{ij} |.\]
\[ M_3 = \max_{1 \le i \le n} \max_{1 \le l \le q} |Z_{il}^2 u_i^2 - E Z_{il}^2 u_i^2|.\]
\[M_4 = \max_{1 \le i \le n} \max_{1 \le j \le p} \max_{1 \le l \le q} |Z_{il}^2 u_i X_{ij} - E Z_{il}^2 u_i X_{ij}|.\]
\[ M_5 = \max_{1 \le i \le n} \max_{1 \le j \le p} \max_{1 \le l \le q} |Z_{il}^2 X_{ij} X_{il} - E Z_{il}^2 X_{ij} X_{il}|.\] 
%Define a slowly varying function in $n$ as $\kappa_n \to \infty$, when $n \to \infty$.
\end{definition}
\begin{assum}\label{2}

(i). \[ s_0^2 \sqrt{\frac{\ln q}{n}} \to 0.\]
%With $r\ge 16$
%\[ \frac{s_0 (p q)^{4/r}}{\sqrt{n}} \to 0.\]

(ii).  
\[ \frac{\sqrt{\ln q}}{\sqrt{n}} \max [(E M_1^{2})^{1/2},  (E M_2^{2})^{1/2},(E M_3^{2})^{1/2},(E M_4^{2})^{1/2}
,(E M_5^{2})^{1/2}] \to 0.\]

\end{assum}
Assumption \ref{2} restricts the number of non-zero entries of $\beta_0$. No sparsity is assumed on the instruments. However, the dimensionality of the model, as measured by the number of instruments $q$, does influence how fast $s_0$ can increase. Part (ii) is similar to assumptions made in \cite{clc17} in the context of establishing the validity of cross validation to choose the tuning parameter in the context of a linear regression mode with exogenous regressors. %A similar assumption as the last part of Assumption \ref{2} has been used in the different context of establishing the validity of cross validation to choose the tuning parameter in the context of a linear regression mode with exogenous regressors in \cite{clc17}. 
Essentially, Assumption \ref{2} (ii) restricts the growth rate of the second moments of the maxima $M_1,...,M_5$. Note that one can provide primitive sufficient conditions for Assumption \ref{2}(ii) by introducing conditions on the maximum of moments rather than moments of maximum of random variables as in Assumption \ref{2}(ii). %This extends the paper a bit more and provides stronger conditions on data, so we avoided that.

%We can find a conservative-primitive condition for Assumption \ref{2}(iii). This can be done with help of Lemma \ref{minkineq} in the supplement appendix. Instead of Assumption \ref{2}(iii), we can use $\max_{1 \le l \le q} E | Z_{il}|^{2 \gamma} \le C < \infty, E |u_i|^{4 \gamma} \le C < \infty, \max_{1 \le j \le p} E | X_{ij}|^{3 \gamma} \le C < \infty$, and 
%\begin{equation}
% \sqrt{\frac{ln q}{n}} (p^2 q)^{1/\gamma} \to 0.\label{prim1}
% \end{equation}
%Note that this primitive condition for Assumption \ref{2}(iii) is discussed in detail and proved after Lemma \ref{minkineq} in the Supplement Appendix.

 %Define the following norm for the oracle inequality,  with $\tilde{Z}= Z \hat{W}^{1/2}$
%\[ \|\tilde{Z}'X (\hat{\beta} - \beta_0) \|_n^2 = \frac{(\hat{\beta} - \beta_0)' X'Z \hat{W} Z' X (\hat{\beta} - \beta_0)}{n^2 q}.\]
%
% Also note that if we  use $\hat{W}_d$ instead of $\hat{W}$, this is prediction norm form case of diagonal weight. In the sequel $C, K$ are universal positive constants. We define $c_n$ in Lemma \ref{l2}. 
%The  oracle inequality for that case is given below:

\begin{thm}\label{thm1}
Under Assumptions \ref{1} and \ref{2}, with $r_z \ge 12, r_x \ge 6, r_u \ge 6$, we have 
with probability at least $1 -  \frac{21}{q^C} - \frac{K [ 5 E M_1^{2}+ 10 E M_2^{2} + 2E M_3^{2} + 2E M_4^{2} + 2 E M _5^{2}]}{n \ln q}$

%(i). \[ \| \tilde{Z}' X (\hat{\beta} - \beta_0)\|_n^2 \le \frac{18 (\lambda_n^*)^2 s_0 }{\phi_{\Sigma_{xzw}}^2 (s_0) },\]

(i).\[ \| \hat{\beta} - \beta_0 \|_1 \le \frac{24 \lambda_n^*  s_0}{ \phi_{\Sigma_{xzw}}^2 (s_0)}\]
for $n$ sufficiently large. The above bound is valid uniformly over ${\cal B}_{l_0}(s_0) = \{ \|\beta_0 \|_{l_0} \le s_0 \}$. 

(ii). Furthermore, $\lambda_n^* = O (\sqrt{\frac{\ln q}{n}})$ and the probability of (i) being valid tends to one. 
\end{thm}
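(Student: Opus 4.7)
The plan is to adapt the standard basic-inequality template for Lasso-type estimators to the two-step GMM quadratic form. Writing $\hat{v}=\hat{\beta}-\beta_0$ and substituting $Y=X\beta_0+u$ in (\ref{eq:1}), the optimality of $\hat{\beta}$ against $\beta_0$ yields
\begin{equation*}
\frac{\hat{v}' X'Z\hat{W}_d Z'X\hat{v}}{n^2 q} \le \frac{2 u' Z\hat{W}_d Z'X \hat{v}}{n^2 q} + 2\lambda_n^*\bigl(\|\beta_0\|_1 - \|\hat{\beta}\|_1\bigr).
\end{equation*}
The remainder of the proof consists of (a) bounding the ``score'' $\|X'Z\hat{W}_d Z'u/(n^2 q)\|_\infty$ with high probability, (b) transferring the population restricted eigenvalue $\phi_{\Sigma_{xzw}}^2(s_0)$ to the sample Gram matrix $\hat{\Sigma}:=X'Z\hat{W}_d Z'X/(n^2 q)$, and (c) combining the two ingredients through the standard Lasso cone argument.

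For (a), I would first derive a preliminary oracle inequality for the first-step estimator $\hat{\beta}_F$ of (\ref{eq:FSGMM}) by the same template but with the identity weight $I_q/q$, requiring only the moment bounds in Assumption \ref{1} and the $M_1,M_2$ maxima from Assumption \ref{2}(ii). Consistency of $\hat{\beta}_F$, together with Assumption \ref{2}(ii) applied to $M_3,M_4$, delivers $\|\hat{W}_d-W_d\|_{l_\infty}=o_p(1)$. Decomposing
\begin{equation*}
\frac{X'Z\hat{W}_d Z'u}{n^2 q}=\frac{X'Z W_d Z'u}{n^2 q}+\frac{X'Z(\hat{W}_d-W_d)Z'u}{n^2 q},
\end{equation*}
the sup-norm of the mean-zero first term is controlled by a Chernozhukov--Chetverikov--Kato maximal inequality from \cite{cck16} applied to the i.i.d.\ sums $\sigma_l^{-2}Z_{il}u_iX_{ij}$, exploiting the moment bounds in Assumption \ref{1} and the maxima $M_1,M_2$; the remainder is absorbed via $\|\hat{W}_d-W_d\|_{l_\infty}=o_p(1)$. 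The resulting rate $O_p(\sqrt{\ln q/n})$ is exactly that encoded in $\lambda_n^*$ of (\ref{eq:lambdastar_def}) and yields part (ii).

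The main obstacle is (b), because $\hat{\Sigma}$ inherits both the sampling noise in the moments $EX_1Z_1'W_d Z_1 X_1'/q$ and the first-step error in $\hat{W}_d$. Following the strategy of Lemma \ref{evalue2}, I would use the bound $\bigl|\hat{v}'(\hat{\Sigma}-\Sigma_{xz}W_d\Sigma_{xz}'/q)\hat{v}\bigr|\le \|\hat{v}\|_1^2\,\bigl\|\hat{\Sigma}-\Sigma_{xz}W_d\Sigma_{xz}'/q\bigr\|_\infty$, and show via the maxima $M_2,M_3,M_4,M_5$ and $\|\hat{W}_d-W_d\|_{l_\infty}=o_p(1)$ that the sup-norm of the difference is $O_p(\sqrt{\ln q/n})$. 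Since on the Lasso cone $\|\hat{v}_{S_0^c}\|_1\le 3\|\hat{v}_{S_0}\|_1$ one has $\|\hat{v}\|_1^2\le 16 s_0\|\hat{v}_{S_0}\|_2^2$, Assumption \ref{2}(i) ensures $s_0\sqrt{\ln q/n}=o(1)$, so the perturbation can be absorbed into half of $\phi_{\Sigma_{xzw}}^2(s_0)$, giving $\hat{v}'\hat{\Sigma}\hat{v}\ge \tfrac12\phi_{\Sigma_{xzw}}^2(s_0)\|\hat{v}_{S_0}\|_2^2$ on a high-probability event.

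For (c), choosing $\lambda_n^*$ to dominate twice the score sup-norm, the basic inequality produces the cone condition $\|\hat{v}_{S_0^c}\|_1\le 3\|\hat{v}_{S_0}\|_1$ and the bound $\hat{v}'\hat{\Sigma}\hat{v}\le 3\lambda_n^*\sqrt{s_0}\|\hat{v}_{S_0}\|_2$. Inserting the sample restricted eigenvalue bound from (b) gives $\|\hat{v}_{S_0}\|_2\le 6\lambda_n^*\sqrt{s_0}/\phi_{\Sigma_{xzw}}^2(s_0)$, hence $\|\hat{v}\|_1\le 4\sqrt{s_0}\|\hat{v}_{S_0}\|_2\le 24\lambda_n^* s_0/\phi_{\Sigma_{xzw}}^2(s_0)$. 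Uniformity over $\mathcal{B}_{l_0}(s_0)$ follows because every probabilistic bound above depends on $\beta_0$ only through $s_0$, and (ii) is then an immediate consequence of the rate of $\lambda_n^*$ derived in (a).
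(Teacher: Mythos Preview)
Your overall architecture is correct and coincides with the paper's proof: basic inequality, score bound, cone condition, transfer of the population restricted eigenvalue to the empirical Gram matrix via Lemma \ref{evalue2}, and the same arithmetic producing the constant $24$. One point needs correction, however. In step (a) you write that the sup-norm of $X'ZW_dZ'u/(n^2q)$ can be controlled by applying the CCK maximal inequality to ``the i.i.d.\ sums $\sigma_l^{-2}Z_{il}u_iX_{ij}$''. This quantity is not an i.i.d.\ average: its $j$th component equals $q^{-1}\sum_l\sigma_l^{-2}(n^{-1}\sum_iX_{ij}Z_{il})(n^{-1}\sum_{i'}Z_{i'l}u_{i'})$, a product of two sample averages, and no single CCK application to $\sigma_l^{-2}Z_{il}u_iX_{ij}$ delivers the bound. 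The paper instead factors the cross term via H\"older (cf.\ (\ref{t1.2})--(\ref{t1.3})),
\[
2\Bigl|\tfrac{u'Z}{n}\,\hat W_d\,\tfrac{Z'X}{nq}\,\hat v\Bigr|
\le 2\,\|Z'u/n\|_\infty\,\|\hat W_d\|_{l_\infty}\,\|Z'X/n\|_\infty\,\|\hat v\|_1,
\]
and then applies CCK separately to $\|Z'u/n\|_\infty$ and $\|Z'X/n - \Sigma_{zx}'\|_\infty$ via the maxima $M_1$ and $M_2$. This is also why your mention of $M_1,M_2$ is the right set of ingredients even though your stated mechanism is off.

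A related simplification: the paper does not decompose $\hat W_d = W_d + (\hat W_d - W_d)$ for the score. It only needs $\|\hat W_d\|_{l_\infty}=1/\min_l\hat\sigma_l^2$ bounded, which follows from Lemma \ref{l2} ($\min_l\hat\sigma_l^2\ge\min_l\sigma_l^2/2$), and this is exactly what produces the factor $2/\min_l\sigma_l^2$ in $\lambda_n^*$ (equation (\ref{eq:lambdastar_def})). The full consistency $\|\hat W_d-W_d\|_{l_\infty}=o_p(1)$ is needed only in step (b), where you correctly invoke it. Finally, a minor rate slip in (b): because $\|\hat W_d-W_d\|_{l_\infty}=O_p(s_0\sqrt{\ln q/n})$ (it inherits an $s_0$ from $\|\hat\beta_F-\beta_0\|_1$), the sup-norm $\|\hat\Sigma-\Sigma_{xz}W_d\Sigma_{xz}'/q\|_\infty$ is $O_p(s_0\sqrt{\ln q/n})$, not $O_p(\sqrt{\ln q/n})$; after multiplying by $16s_0$ this gives $O_p(s_0^2\sqrt{\ln q/n})$, which is precisely why Assumption \ref{2}(i) requires $s_0^2\sqrt{\ln q/n}\to 0$.
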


\textbf{Some remarks}

1. While we mainly use Theorem \ref{thm1} as a stepping stone towards testing hypotheses about the elements of $\beta_0$, it may be of interest in its own right as it guarantees that the two-step GMM estimator estimates $\beta_0$ precisely. In particular, we see that $\| \hat{\beta} - \beta_0 \|_1=O_p(s_0\sqrt{\ln q/n})$ allowing $q$ to increase very quickly in $n$ without sacrificing consistency of $\hat{\beta}$ since the number of instruments only enters the upper bound on the $l_1$ estimation error through its logarithm.

2. Lemma 4.4 of \cite{gold2017inference} and the line immediately below it provide an upper bound on the $l_1$ estimation error of their estimator which is of order $O_p (s_0 s_{A}^2 \ln q/n + s_0 s_A \sqrt{\ln q/n})$ where $s_A<q$ is the number of relevant instruments and the remaining quantities are as in the present paper. Recall that we do not impose any restrictions on the number of relevant instruments. %Furthermore, Theorem 2 in conjunction with Corollary 1 of \cite{BCHN} shows that the $l_1$ norm of the estimation error of the estimator developed in that reference is of order $O_p (s_0 \sqrt{\ln p/n} \max_{1 \le l \le q} \frac{1}{n}  \sum_{i=1}^n \{ Z_{il}^2 u_i^2\}^{1/2})$ using the notation of the present paper.

3. Note that the result of part (i) of Theorem \ref{thm1} is uniform over the $l_0$ ball ${\cal B}_{l_0} (s_0)=\cbr[0]{x\in\mathbb{R}^p: ||x||_{l_0}\leq s_0}$ since the bounds depend on $\beta_0$ only through $s_0$. In particular, the non-zero entries of $\beta_0$ can drift to zero at any rate. 

%4. Note that unlike nonparametrics, in high dimensional parametric econometrics we use sparsity rather than the magnitude of the true coefficients. This is mainly due to the oracle inequality result above, and stems from the setup of eigenvalue conditions in high dimensional econometrics, where a certain restricted part  or subset of a large matrix is invertible.  For this see our eigenvalue conditions in p.161-174 of \cite{bvdg2011}.

Having established that the two-step GMM estimator estimates $\beta_0$ precisely, we turn towards desparsifying it in order to construct tests and confidence intervals. Furthermore, in Section \ref{recipe}, we provide a choice of tuning parameters.

\section{Desparsification}\label{sec:desp1}
\subsection{The desparsified two-step GMM estimator}\label{sec:desp}
We now introduce the desparsified two-step GMM estimator that we use to construct tests and confidence intervals. To this end, consider the Karush-Kuhn-Tucker first order conditions for the problem in (\ref{eq:1}) 
\begin{equation}
\frac{-X'Z}{n} \frac{\hat{W}_d}{q} \frac{Z' (Y - X \hat{\beta})}{n} + \lambda_n^* \hat{\kappa} = 0, \label{3}
\end{equation}
where $\| \hat{\kappa} \|_{\infty} \le 1$, and $\hat{\kappa}_j = sgn (\hat{\beta}_j)$ when $\hat{\beta}_j \neq 0$. for $j=1,...,p$. Since $Y = X \beta_0+u$,  
\begin{equation}
\left[ \frac{X'Z}{n} \frac{\hat{W}_d}{q} \frac{Z'X}{n}\right] (\hat{\beta} - \beta_0) + \lambda_n^* \hat{\kappa} = 
\frac{X'Z}{n} \frac{\hat{W}_d}{q} \frac{Z'u}{n}.\label{eq:4}
\end{equation}
Next, since $\hat{\Sigma}:=\left[ \frac{X'Z}{n} \frac{\hat{W}_d}{q} \frac{Z'X}{n}\right]$ is of reduced rank, it is not possible to left-multiply by its inverse in the above display in order to isolate $\sqrt{n}(\hat{\beta}-\beta_0)$. Instead, we construct an approximate inverse, $\hat{\Gamma}$, of $\hat{\Sigma}$ and control the error resulting from this approximation. We shall be explicit about the construction of $\hat{\Gamma}$ in the sequel (cf Section \ref{clime}) but first highlight which properties it must have in order to conduct asymptotically valid inference based on it. Left multiply (\ref{eq:4}) by $\hat{\Gamma}$ to obtain
\begin{equation}
\hat{\Gamma}\left[ \frac{X'Z}{n} \frac{\hat{W}_d}{q} \frac{Z'X}{n}\right] (\hat{\beta} - \beta_0) +\hat{\Gamma} \lambda_n^* \hat{\kappa} = 
\hat{\Gamma}\left( \frac{X'Z}{n} \frac{\hat{W}_d}{q} \frac{Z'u}{n} \right).\label{5}
\end{equation}
Add $(\hat{\beta} - \beta_0)$ to both sides of (\ref{5}) and rearrange to get 
%\begin{equation}
%(\hat{\beta} - \beta_0)+ \hat{\Gamma}\left[ \frac{X'Z}{n} \frac{\hat{W}}{q} \frac{Z'X}{n}\right] (\hat{\beta} - \beta_0) +\hat{\Gamma} \lambda_n^* \hat{\kappa} = (\hat{\beta} - \beta_0)+
%\hat{\Gamma}\left(\frac{X'Z}{n} \frac{\hat{W}}{q} \frac{Z'u}{n}\right).\label{6}
%\end{equation}
%Take the second term in the left side of (\ref{6}) to  the right side
\begin{equation}
(\hat{\beta} - \beta_0)+\hat{\Gamma} \lambda_n^* \hat{\kappa} = 
\hat{\Gamma}\left(\frac{X'Z}{n} \frac{\hat{W}_d}{q} \frac{Z'u}{n}\right) -  \left(  \hat{\Gamma}\left[ \frac{X'Z}{n} \frac{\hat{W}_d}{q} \frac{Z'X}{n}\right]- I_p \right)  (\hat{\beta} - \beta_0) .\label{7}
\end{equation}
Upon defining $\Delta  = \sqrt{n} \left(  \hat{\Gamma}\left[ \frac{X'Z}{n} \frac{\hat{W}_d}{q} \frac{Z'X}{n}\right]- I_p \right)  (\hat{\beta} - \beta_0)$, which can be interpreted as the approximation error due to using an approximate inverse of $\left[ \frac{X'Z}{n} \frac{\hat{W}_d}{q} \frac{Z'X}{n}\right]$ instead of an exact inverse, (\ref{7}) can also be written as  
\begin{equation}
(\hat{\beta} - \beta_0)+\hat{\Gamma} \lambda_n^* \hat{\kappa} = 
\hat{\Gamma}\left(\frac{X'Z}{n} \frac{\hat{W}_d}{q} \frac{Z'u}{n}\right) - \frac{\Delta}{\sqrt{n}}.\label{8}
\end{equation}
Thus, 
\begin{align*}
\hat{\beta}=\beta_0-\hat{\Gamma} \lambda_n^* \hat{\kappa}+\hat{\Gamma}\left(\frac{X'Z}{n} \frac{\hat{W}_d}{q} \frac{Z'u}{n}\right) - \frac{\Delta}{\sqrt{n}} 
\end{align*}
where $\hat{\Gamma} \lambda_n^* \hat{\kappa}$ is the shrinkage bias introduced to $\hat{\beta}$ due to penalization in (\ref{eq:1}). By removing this, we define the \emph{two-step desparsified GMM} estimator
\begin{align}
\hat{b}
=
\hat{\beta} + \hat{\Gamma} \lambda_n^* \hat{\kappa}
=
\beta_0 + \hat{\Gamma}\left(\frac{X'Z}{n} \frac{\hat{W}_d}{q} \frac{Z'u}{n}\right) - \frac{\Delta}{\sqrt{n}}\label{10}.
\end{align} 
Note that by (\ref{3}) one can calculate $\hat{b}$ in terms of observable quantities as
\begin{align}
\hat{b}=\hat{\beta}+\hat{\Gamma}\frac{X'Z}{n} \frac{\hat{W}_d}{q} \frac{Z' (Y- X \hat{\beta})}{n}.\label{13a}
\end{align}

%Note that the first term on the right side of (\ref{8}) may  approach  standard GMM limit, and if the bias term divided by $\sqrt{n}$ goes to zero in probability, we may have GMM limit. So we setup our desparsified GMM estimator as
%\begin{eqnarray}
%\hat{b}& =& \hat{\beta} + \hat{\Gamma} \lambda_n^* \hat{\kappa}\label{9} \\
%& = &  \hat{\beta} + \hat{\Gamma} \left( \frac{X'Z}{n} \frac{\hat{W}}{q} \frac{Z' (Y- X \hat{\beta})}{n}\right),\label{10}
%\end{eqnarray}
%where we  use (\ref{3}) in last equality. 
Thus, to conduct inference on the $j$th component of $\beta_0$ we consider
\begin{align}
\sqrt{n}(\hat{b}_j-\beta_{0j})
=
\sqrt{n}e_j'(\hat{b}-\beta_0)
=
\hat{\Gamma}_j\left(\frac{X'Z}{n} \frac{\hat{W}_d}{q} \frac{Z'u}{\sqrt{n}}\right) - \Delta_j\label{eq:inf}
\end{align}
where $\hat{\Gamma}_j$ denotes the $j$th row of $\hat{\Gamma}$. Hence, in order to conduct asymptotically valid gaussian inference, it suffices to establish a central limit theorem for $\hat{\Gamma}_j\left(\frac{X'Z}{n} \frac{\hat{W}_d}{q} \frac{Z'u}{\sqrt{n}}\right)$ as well as asymptotic negligibility of $\Delta_j$. To achieve these two goals, we need to construct an approximate inverse $\hat{\Gamma}$ and develop its properties. The subsequent subsection is concerned with these issues.

\subsection{Constructing $\hat{\Gamma}$}\label{clime}
In Section \ref{sec:desp} we assumed the existence of a an approximate inverse $\hat{\Gamma}$ of $\hat{\Sigma}=   \frac{X'Z}{n} \frac{\hat{W}_d}{q} \frac{Z'X}{n}$. We now turn towards the construction of $\hat{\Gamma}$. Our construction builds on the CLIME estimator of \cite{cai11} (which was further refined in \cite{glt18}). We establish how our estimator can provide a valid approximate inverse allowing for conditional heteroskedasticity. First, define
\begin{align*}
\Sigma = \Sigma_{xz} \frac{W_d}{q} \Sigma_{xz}',
\end{align*}
as well as its inverse $\Gamma=\Sigma^{-1}$, which is guaranteed to exist by Assumption \ref{5cl} below.
%In this section we provide an approximate precision matrix estimator, that can also possibly operate under non-sparse precision matrix estimation. Main idea (CLIME estimator) is developed in \cite{cai11} for iid based random variables and simple design matrix. This is further adapted to non-symmetric matrix  with  non-iid components of the matrix in \cite{glt18} in case of high dimensional two stage least squares. Here we show  that  it can be used in high dimensional GMM, so that it can take care of heteroskedastic data.
We shall assume that for some $0\leq f<1,\ m_{\Gamma}>0$ and $s_{\Gamma}>0$,
\begin{align*}
\Gamma
\in
U (m_{\Gamma}, f, s_{\Gamma}) 
:= 
\cbr[2]{A\in\mathbb{R}^{p\times p}: A > 0, \enVert[0]{A}_{l_1} \le m_{\Gamma}, \max_{1 \le j \le p} \sum_{k=1}^p | A_{jk}|^f \le s_{\Gamma}},
\end{align*} 
where $m_{\Gamma}$ and $s_{\Gamma}$ regulate the sparsity of the matrices in $U (m_{\Gamma}, f, s_{\Gamma})$ and their potential dependence on $n$ is suppressed (in particular we allow $s_\Gamma,m_\Gamma\to\infty$). Note that $f=0$ amounts to assuming that $\Gamma$ has exactly sparse rows.

Our proposed approximate inverse for $\hat{\Sigma}$ is found by the following variant of the CLIME estimator: The $j$th row of $\hat{\Gamma}$, denoted $\hat{\Gamma}_j$, is found as
\begin{align}
 \hat{\Gamma}_j=\argmin_{a\in \mathbb{R}^p} \|a \|_1 \quad \text{s.t.} \quad \| a\hat{\Sigma} - e_j' \|_{\infty} \le \mu,\label{eq:CLIME}
\end{align}
where $\mu>0$ and the dependence of $\hat{\Gamma}_j$ on $\mu$ is suppressed. The exact expression for $\mu$ is involved and given in the statement of Lemma \ref{cl-l3} in the appendix which also establishes that $\mu=O(m_{\Gamma}s_0 \frac{\sqrt{\ln q}}{\sqrt{n}})=o(1)$, cf. (\ref{rocmu}). Furthermore, Lemma \ref{cl-l3} in the appendix shows that with probability converging to one, $ \enVert[0]{\Gamma \hat{\Sigma} -I_p}_{\infty} \le \mu$ implying that the problem in (\ref{eq:CLIME}) is well-defined (with probability approaching one) since $\Gamma$ satisfies the constraint. This is noteworthy since $\Gamma$ is not required to be strictly sparse.

\begin{assum}\label{4}

(i). $\Gamma \in U (m_{\Gamma}, f, s_{\Gamma})$.

(ii). $m_{\Gamma} s_0 \frac{\sqrt{\ln q}}{\sqrt{n}}= o(1).$

\end{assum}
Assumption \ref{4} (i) restricts the structure of $\Gamma$ by imposing it to belong to $U (m_{\Gamma}, f, s_{\Gamma})$. Note that for $0<f<1$, $\Gamma$ is not required to be (exactly) sparse as opposed to much previous work. Part (ii) restricts the growth rates of $m_\Gamma, s_0$ and $q$. Note that by Assumption \ref{4}, we restrict the growth rate of the absolute sums of the coefficients in rows of the precision matrix $\Gamma$. This will also restrict the relation between $X_i, Z_i,$, but we could not come up with a primitive on the data for Assumption \ref{4}.

%Note that by construction the CLIME estimator $\hat{\Gamma}$ is ``close'' to $\Gamma$ as $\enVert[0]{\hat{\Gamma}-\hat{\Sigma}}_\infty\leq \mu=o(1)$. Thus, for asymptotic purposes, it is sensible that we can replace $\hat{\Gamma}$ by $\Gamma$.

%Assumption 4 is  used to get $l_1, l_2, l_{\infty}$ bounds for the rows of the approximate inverse. These are shown in Lemmata \ref{cl-l1}-\ref{cl-l4}.

\section{Testing and uniformly valid confidence intervals}\label{inferen}
In this section we show how to conduct asymptotically valid gaussian inference on each entry of $\beta_0$. It is a technical exercise to extend this to joint inference, by e.g. Wald-type tests, on any fixed and finite number of elements of $\beta_0$. At the price of more technicalities and more stringent assumptions we also conjecture that it is possible to conduct joint inference on a subvector of $\beta_0$ of slowly increasing dimension. We refer to \cite{ck18} for details in the case of the conservative Lasso applied to the high-dimensional plain linear regression model and do not pursue these extensions further here. To conduct inference on $\beta_{0j}$ we consider the studentized version of (\ref{eq:inf}):

%In this section, we consider whether inference is possible, and what will be the limit in a high dimensional GMM system. We consider the case of only one restriction on $j$ th coefficient. However, fixed number of restrictions, in a Wald test format will also be possible with the proof. Increasing number of restrictions is another possibility, as in \cite{ck18} least squares case. But we opt out of that not to mask main contributions. In GMM, compared with least squares, number of instruments, $q$ is growing and also there are matrices growing in dimensions in GMM compared to least squares case, so a  simple extension is not possible from least squares.  Our inference involves
%\[ \sqrt{n} e_j' (\hat{b} - \beta_0) = e_j' \hat{\Gamma} \left( \frac{X'Z}{n} \frac{\hat{W}}{q} \frac{Z'u}{n^{1/2}} \right) - e_j' \Delta.\]
%But we studentize this in the following way
\begin{equation}
t_{W_d} = \frac{n^{1/2} e_j' (\hat{b} - \beta_0)}{\sqrt{e_j' \hat{\Gamma} \hat{V}_d \hat{\Gamma}' e_j}},\label{5.0}
\end{equation}
where 
\begin{align}
\hat{V}_d = \left( \frac{X'Z}{n} \frac{\hat{W}_d}{q} \hat{\Sigma}_{Zu} \frac{\hat{W}_d}{q} \frac{Z'X}{n}   \right) \quad \text{and} \quad \hat{\Sigma}_{Zu} = \frac{1}{n} \sum_{i=1}^n Z_i Z_i' \hat{u}_i^2, \label{5.1}
\end{align}
and $\hat{u}=Y-X\hat{\beta}_F$ are the first step Lasso-GMM residuals.

To state the next assumption, define the $q\times q$ matrix $\Sigma_{z u} =  E  Z_1  Z_1' u_1^2$ as well as the $p\times p$ matrices $V_1 = \Sigma_{x z} W_d  \Sigma_{z u }  W_d \Sigma_{x z}'$ and $V_d=\frac{1}{q^2}V_1$. Finally, let 
\[ M_6 = \max_{1 \le i \le n} \max_{1 \le j \le p}| X_{ij} u_i - E X_{ij} u_i|,\]
\[ M_{7} = \max_{1 \le i \le n} \max_{1 \le l \le q} \max_{1 \le m \le q} |Z_{il} Z_{im} u_i^2 - E Z_{il} Z_{im} u_i^2|.\]

In order to establish the asymptotic normality of $\sqrt{n}(\hat{b}_j-\beta_{0j})$ when $\hat{\Gamma}$ is the CLIME estimator in (\ref{eq:CLIME}), we impose the following assumptions.

\begin{assum}\label{5cl}
(i). $m_{\Gamma}^{r_u/2}/n^{r_u/4-1} \to 0$ and
\[ s_{\Gamma} (m_{\Gamma} \mu)^{1-f} \sqrt{\ln q} = O \left( \frac{s_{\Gamma} m_{\Gamma} ^{2-2f}s_0^{1-f} (\ln q)^{1 - f/2}}{n^{(1-f)/2}}\right) = o(1).\]

In addition, $ \frac{m_{\Gamma}  s_0^2  \ln q}{\sqrt{n}} = o(1)$.

(ii) $Eigmax (V_d)$ and $Eigmax (\Sigma)$ are bounded from above. $Eigmin (V_d)$ and $Eigmin(\Sigma)$ are bounded away from zero

(iii). $\sqrt{\frac{\ln q}{n}} \max[ (E M_6^{2})^{1/2}, ( E M_{7}^{2})^{1/2}] \to 0.$

(iv). $r_z > 12$ and
$\frac{ m_{\Gamma}^2  s_0 q^{4/r_z} n^{2/r_z} \sqrt{\ln q}}{n^{1/2}} \to 0.$

\end{assum}
Assumption \ref{5cl} governs the permissible growth rates of the number of instruments, $q$, the sparsity imposed on $\Gamma$ via $s_{\Gamma}$ and $m_{\Gamma}$, as well as the number of non-zero entries in $\beta_0$, $s_0$. For example, assuming that $f=1/2$, $r_u=10$ and $r_z=16$ along with $s_{\Gamma} = O (\ln n),\ m_{\Gamma} = O (\ln n),\ s_0 = n^{1/10}, q = 2n$ is in accordance with Assumption \ref{5cl}. Assumption \ref{5cl}(ii) is a standard assumption on population matrices. Note that the requirement $Eigmin(\Sigma)$ being bounded away from zero implies the adaptive restricted eigenvalue being bounded away from zero (as required in Assumption \ref{1}). The considered largest and smallest eigenvalues can be allowed to be unbounded and approach zero, respectively, at the expense of strengthening other assumptions. Thus, instruments that are weakly correlated with the explanatory variables can be allowed for. Part (iii) is similar to assumptions imposed in \cite{clc17}, cf. also the discussion of Assumption \ref{2} above.

%Here we discuss Assumption \ref{5cl}(iii). We can provide primitive conditions for that. First we start with analysis of $M_{7}$.  We will benefit from Lemma
%\ref{minkineq} in the supplement Appendix.  In that respect, we need double Cauchy-Schwartz inequality
%\begin{eqnarray*}
%E | Z_{il} Z_{im} u_i^2 |^{\gamma} & \le & 
%[ E | Z_{il} |^{3 \gamma}]^{1/3} \{ E | Z_{im}|^{3\gamma/2} E | u_i |^{3 \gamma} \}^{2/3} \\
%& \le & [ E | Z_{il}|^{3 \gamma}]^{1/3}  [ E | Z_{im}|^{3 \gamma}]^{1/3} [ E | u_i |^{6 \gamma}]^{1/3} \\
%& \le & C,
%\end{eqnarray*}
%with  assuming 
%\begin{equation}
%\max_{1 \le l \le q} E | Z_{il} |^{3 \gamma} \le C < \infty.\label{prim5}
%\end{equation}
%\begin{equation}
%E | u_i |^{ 6 \gamma} \le C < \infty.\label{prim6}
%\end{equation}
%So we can use Lemma \ref{minkineq} and $[E M_{7}^{\gamma}]^{1/\gamma} \le (q^2)^{1/\gamma} C$. This means the primitive for Assumption 
%\ref{5cl}(iii) is: (with (\ref{prim5})- (\ref{prim6}))
%\[ \sqrt{lnq/n} (q^2)^{1/\gamma} \to 0.\]
%For $M_6$ in Assumption \ref{5cl}(iii) is to use the same analysis above but we need also $\max_{1 \le j \le p} E | X_{ij}|^{2 \gamma} \le C < \infty$.

The following theorem establishes the validity of asymptotically gaussian inference for the desparsified two-step GMM estimator.

\begin{thm}\label{thmcl1}
Let $j\in\cbr[0]{1,...,p}$. Then, under Assumptions \ref{1},\ref{2},\ref{4} and \ref{5cl} with $r_z  > 12, r_x \ge 6, r_u > 8$ 

(i). \begin{align}
\frac{n^{1/2} (\hat{b}_j - \beta_{0j})}{\sqrt{e_j' \hat{\Gamma} \hat{V}_d \hat{\Gamma}' e_j}} \stackrel{d}{\to} N(0,1)\quad \text{uniformly over  } \beta_0\in {\cal B}_{l_0}(s_0)\label{eq:test} 
\end{align}

(ii). \[ \sup_{\beta_0 \in {\cal B}_{l_0}(s_0)} | e_j' \hat{\Gamma} \hat{V}_d \hat{\Gamma}' e_j - e_j' \Gamma V_d \Gamma' e_j | = o_p (1),\].
\end{thm}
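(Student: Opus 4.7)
The plan is to start from the representation (\ref{eq:inf}),
$$\sqrt{n}(\hat{b}_j-\beta_{0j}) = \hat{\Gamma}_j\frac{X'Z}{n}\frac{\hat{W}_d}{q}\frac{Z'u}{\sqrt{n}} - \Delta_j,$$
and establish three things: (a) $\Delta_j = o_p(1)$; (b) after normalization by $\sigma_n := \sqrt{e_j'\Gamma V_d\Gamma' e_j}$ the leading term converges to $N(0,1)$; and (c) the studentizer is consistent, giving part~(ii). Part~(i) then follows from (a)--(c) by Slutsky, since $\sigma_n$ is bounded and bounded away from zero by Assumption~\ref{5cl}(ii). For (a), the CLIME constraint in (\ref{eq:CLIME}) gives $\|\hat{\Gamma}\hat{\Sigma} - I_p\|_\infty \le \mu$, so $l_1/l_\infty$ duality combined with Theorem~\ref{thm1}(i) yields $|\Delta_j| \le \sqrt{n}\mu \|\hat{\beta}-\beta_0\|_1 = O_p\!\left(m_\Gamma s_0^2 \ln q/\sqrt{n}\right)$, which is $o_p(1)$ by the last condition in Assumption~\ref{5cl}(i). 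Since the bound depends on $\beta_0$ only through $s_0$, uniformity over $\mathcal{B}_{l_0}(s_0)$ is immediate.

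For (b), decompose the leading term as
$$\hat{\Gamma}_j \frac{X'Z}{n}\frac{\hat{W}_d}{q}\frac{Z'u}{\sqrt{n}} = \Gamma_j \Sigma_{xz}\frac{W_d}{q}\frac{Z'u}{\sqrt{n}} + R_n,$$
with $R_n$ collecting three cross-terms obtained by replacing $\hat{\Gamma}$, $X'Z/n$, $\hat{W}_d$ by $\Gamma$, $\Sigma_{xz}$, $W_d$ one at a time. Each cross-term is bounded via $l_1/l_\infty$ duality using the CLIME rate $\|\hat{\Gamma}_j-\Gamma_j\|_1 = O_p(s_\Gamma (m_\Gamma\mu)^{1-f})$ from the appendix, entrywise concentration of $X'Z/n$ and $\hat{W}_d$ around $\Sigma_{xz}$ and $W_d$ (via the moment bounds on $M_2, M_3, M_4$ in Assumption~\ref{2}(ii)), the rate $\|Z'u/\sqrt{n}\|_\infty = O_p(\sqrt{\ln q})$, and the $l_1$-bound $\|\Gamma\|_{l_1}\le m_\Gamma$; Assumption~\ref{5cl}(i),(iv) then make $R_n = o_p(1)$. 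The leading term is an i.i.d.\ sum $n^{-1/2}\sum_{i=1}^n \Gamma_j\Sigma_{xz}(W_d/q) Z_i u_i$ with variance $\sigma_n^2$, to which a Lyapounov CLT applies once we bound its $r_u/2$-th absolute moment using $r_u>8$, $r_z>12$, $\|\Gamma\|_{l_1}\le m_\Gamma$, and $\|W_d\|_{l_\infty}\le C$ (which holds since $\min_l\sigma_l^2$ is bounded away from zero by Assumption~\ref{1}). The Lyapounov ratio is then controlled by $m_\Gamma^{r_u/2}/n^{r_u/4-1}\to 0$ from Assumption~\ref{5cl}(i). Uniformity over $\mathcal{B}_{l_0}(s_0)$ is automatic since the i.i.d.\ sum's distribution does not depend on $\beta_0$.

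For (c), setting $\gamma = \Gamma_j'$ and $\hat{\gamma} = \hat{\Gamma}_j'$ use the algebraic identity
$$\hat{\gamma}'\hat{V}_d\hat{\gamma} - \gamma'V_d\gamma = (\hat{\gamma}-\gamma)'\hat{V}_d(\hat{\gamma}-\gamma) + 2(\hat{\gamma}-\gamma)'\hat{V}_d\gamma + \gamma'(\hat{V}_d - V_d)\gamma,$$
and apply $l_1/l_\infty$ duality to each summand. The first two are controlled by the CLIME $l_1$-rate above together with $\|\gamma\|_1\le m_\Gamma$ and $\|\hat{V}_d\|_\infty = O_p(1)$ (the latter using that $Eigmax(V_d)$ bounded above implies all entries of $V_d$ are bounded, and concentration of $\hat{V}_d$). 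The third reduces to $\|\hat{V}_d - V_d\|_\infty = o_p(1)$, which follows by yet another layer of triangle inequalities over $\|\hat{\Sigma}_{Zu} - \Sigma_{zu}\|_\infty$, $\|X'Z/n - \Sigma_{xz}\|_\infty$, and $\|\hat{W}_d - W_d\|_\infty$. The first of these uses $\hat{u}_i - u_i = -X_i'(\hat{\beta}_F - \beta_0)$ with the first-step oracle inequality (cf.\ the proof of Theorem~\ref{thm1}) and the moment bounds on $M_6, M_7$ in Assumption~\ref{5cl}(iii). Uniformity again follows from the uniformity in $s_0$ of Theorem~\ref{thm1}.

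The main obstacle is the CLT in step (b): without sub-gaussianity and with growing dimensions, one must carefully track how $m_\Gamma$ amplifies the heavy tails of $Z_iu_i$ inside the summand $\Gamma_j\Sigma_{xz}(W_d/q)Z_iu_i$, which is precisely why the explicit conditions $r_u>8$, $r_z>12$, and $m_\Gamma^{r_u/2}/n^{r_u/4-1}\to 0$ in Assumption~\ref{5cl} are binding. A closely related secondary obstacle sits in part~(ii), where the three-way concentration of $\hat{V}_d$ to $V_d$ is sandwiched between $\gamma$ and $\hat{\gamma}$ and so picks up a factor $m_\Gamma^2$; this is what Assumption~\ref{5cl}(iv)'s rate $m_\Gamma^2 s_0 q^{4/r_z} n^{2/r_z}\sqrt{\ln q}/n^{1/2}\to 0$ is calibrated to kill.
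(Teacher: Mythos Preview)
Your plan mirrors the paper's proof almost step for step: the paper also splits the studentized statistic into a main term plus $\Delta_j$, applies a Lyapounov CLT to the infeasible version $\Gamma_j\Sigma_{xz}(W_d/q)Z'u/\sqrt{n}$, shows the three-way replacement remainder vanishes, and proves variance consistency. Your treatment of (a) and (b) matches the paper's Steps~1, 2a, and~3.

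There is, however, a gap in your handling of (c). Your identity places $\hat V_d$ inside the cross-term $2(\hat\gamma-\gamma)'\hat V_d\gamma$, and the $l_1/l_\infty$ bound you invoke delivers
\[
|2(\hat\gamma-\gamma)'\hat V_d\gamma|\ \le\ 2\,\|\hat\gamma-\gamma\|_1\,\|\gamma\|_1\,\|\hat V_d\|_\infty\ =\ O_p\!\big(s_\Gamma(m_\Gamma\mu)^{1-f}\big)\cdot m_\Gamma\cdot O_p(1).
\]
Assumption~\ref{5cl}(i) only kills $s_\Gamma(m_\Gamma\mu)^{1-f}\sqrt{\ln q}$, not $m_\Gamma\cdot s_\Gamma(m_\Gamma\mu)^{1-f}$, and nothing in the assumptions forces $m_\Gamma/\sqrt{\ln q}$ to be bounded (take $f=0$, $s_\Gamma=s_0=1$, $q=n$, $m_\Gamma=n^{a}$ with $1/6<a<1/4$ and $r_z$ large: Assumptions~\ref{4} and~\ref{5cl} hold yet your bound diverges). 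The paper sidesteps this by first swapping $\hat V_d$ for $V_d$ throughout---via the intermediate matrices $\tilde V_d,\bar V_d$---where each swap costs only $\|\hat\Gamma_j\|_1^2\le m_\Gamma^2$ times a $\|\cdot\|_\infty$-difference controlled by Assumption~\ref{5cl}(iv). The remaining piece $|e_j'\hat\Gamma V_d\hat\Gamma'e_j-e_j'\Gamma V_d\Gamma'e_j|$ is then handled with an $l_2$ bound (Lemma~3.1 in the supplement of \cite{van2014}): since $\|V_d\Gamma'e_j\|_2\le Eigmax(V_d)/Eigmin(\Sigma)=O(1)$ by Assumption~\ref{5cl}(ii), the cross-term becomes $O_p(\|\hat\Gamma_j-\Gamma_j\|_2)=O_p\big(s_\Gamma(m_\Gamma\mu)^{1-f}\big)$ with no stray $m_\Gamma$. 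Either reorder your decomposition so that $V_d$ rather than $\hat V_d$ sits in the cross-term, or switch to $l_2$ for that piece; the rest of your argument then goes through.
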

Part (i) of Theorem \ref{thmcl1} establishes asymptotic normality of $\hat{b}_j$ for every $j\in\cbr[0]{1,...,p}$  uniformly over $\beta_0\in\mathcal{B}_{l_0}(s_0)$. 
%This actually implies that the weak convergence to the standard normal distribution is uniform as made clear in part (i) of Theorem \ref{thm2c} below.

Part (ii) of Theorem \ref{thmcl1} provides a uniformly consistent estimator of the asymptotic variance of $n^{1/2} (\hat{b}_j - \beta_{0j})$. This is valid even for conditionally heteroskedastic $u_i,\ i=1,...,n$ and even though the dimension ($p\times p$) of the involved matrices diverges with the sample size.

Next, we show that the confidence bands resulting from Theorem \ref{thmcl1} have asymptotically uniformly correct coverage over $\mathcal{B}_{l_0}(s_0)$. Furthermore, the bands contract uniformly at the optimal $\sqrt{n}$ rate. Let $\Phi(\cdot)$ denote the cdf of the standard normal distribution and let $z_{1 - \alpha/2}$ be its $1-\alpha/2$ quantile. For brevity, let $\hat{\sigma}_{bj}=
\sqrt{e_j' \hat{\Gamma} \hat{V}_d \hat{\Gamma}' e_j}$ while $diam([a,b])=b-a$ denotes the length of the interval $[a,b]$ in the real line.
\begin{thm}\label{thm2c}
Let $j\in\cbr[0]{1,...,p}$. Then, under Assumptions \ref{1}-\ref{5cl} with $r_z  > 12, r_x \ge 6, r_u > 8$ 

(i). \[ \sup_{t \in \mathbb{R}} \sup_{ \beta_0 \in {\cal B}_{l_0}(s_0)} \left| P \left( 
\frac{n^{1/2} (\hat{b}_j - \beta_{0j})}{\sqrt{e_j' \hat{\Gamma} \hat{V}_d \hat{\Gamma}' e_j}} \le t \right) - \Phi (t) \right| \to 0
.\]

(ii). \[ \lim_{n \to \infty} \inf_{\beta_0 \in {\cal B}_{l_0}(s_0)} P \left( \beta_{0j} \in 
[ \hat{b}_j - z _{1 - \alpha/2} \frac{\hat{\sigma}_{bj}}{n^{1/2}}, \hat{b}_j + z_{1 - \alpha/2} \frac{\hat{\sigma}_{bj}}{n^{1/2}}] \right)
= 1 - \alpha.\]

(iii). \[ \sup_{\beta_0 \in {\cal B}_{l_0}(s_0)} diam \left( [ \hat{b}_j - z_{1 - \alpha/2} \frac{\hat{\sigma}_{bj}}{n^{1/2}}, \hat{b}_j + 
z_{1 - \alpha/2} \frac{\hat{\sigma}_{bj}}{n^{1/2}}]\right) = O_p \del[2]{\frac{1}{n^{1/2}}}.\]
\end{thm}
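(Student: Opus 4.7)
\textbf{Plan for the proof of Theorem \ref{thm2c}.} The three conclusions are essentially cosmetic repackagings of Theorem \ref{thmcl1}, so the plan is to deduce each of them from parts (i) and (ii) of that theorem, carefully tracking uniformity in $\beta_0\in\mathcal{B}_{l_0}(s_0)$. Throughout I will write $t_{W_d}(\beta_0):=n^{1/2}(\hat b_j-\beta_{0j})/\hat\sigma_{bj}$ so that the event in (ii) is exactly $\{|t_{W_d}(\beta_0)|\le z_{1-\alpha/2}\}$ and the diameter in (iii) is $2z_{1-\alpha/2}\hat\sigma_{bj}/n^{1/2}$.

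For part (i) the plan is to upgrade the uniform weak convergence in Theorem \ref{thmcl1}(i) to uniform convergence of the cdf via a Polya-type argument. Concretely, fix $\varepsilon>0$ and pick a finite grid $t_1<\dots<t_K$ of $\mathbb{R}$ such that $\Phi(t_{k+1})-\Phi(t_k)<\varepsilon$ for all $k$ (plus $\Phi(t_1)<\varepsilon$, $1-\Phi(t_K)<\varepsilon$), using uniform continuity of $\Phi$. On each point $t_k$, Theorem \ref{thmcl1}(i) gives $\sup_{\beta_0}|P(t_{W_d}(\beta_0)\le t_k)-\Phi(t_k)|\to 0$; monotonicity of $t\mapsto P(t_{W_d}(\beta_0)\le t)$ and $\Phi$ then yields $\sup_{t\in\mathbb{R}}\sup_{\beta_0}|P(t_{W_d}(\beta_0)\le t)-\Phi(t)|\le \varepsilon+o(1)$. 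Letting $\varepsilon\downarrow 0$ gives (i). The only subtlety here is that the weak-convergence statement in Theorem \ref{thmcl1}(i) needs to be read as pointwise-in-$t$ uniform-in-$\beta_0$ convergence, which is how the high-dimensional CLT behind it is in fact established.

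Part (ii) follows from part (i) by pivoting. Since $\{\beta_{0j}\in[\hat b_j\pm z_{1-\alpha/2}\hat\sigma_{bj}/n^{1/2}]\}=\{|t_{W_d}(\beta_0)|\le z_{1-\alpha/2}\}$, part (i) evaluated at $t=z_{1-\alpha/2}$ and $t=-z_{1-\alpha/2}$ gives
\begin{equation*}
\sup_{\beta_0\in\mathcal{B}_{l_0}(s_0)}\bigl|P(\beta_{0j}\in[\hat b_j\pm z_{1-\alpha/2}\hat\sigma_{bj}/n^{1/2}])-(1-\alpha)\bigr|\to 0,
\end{equation*}
which in particular yields $\inf_{\beta_0}P(\cdot)\to 1-\alpha$.

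Part (iii) reduces to showing that $\hat\sigma_{bj}=O_p(1)$ uniformly over $\mathcal{B}_{l_0}(s_0)$. By Theorem \ref{thmcl1}(ii), $\sup_{\beta_0}|\hat\sigma_{bj}^2-e_j'\Gamma V_d\Gamma' e_j|=o_p(1)$, and the target $e_j'\Gamma V_d\Gamma' e_j$ is uniformly bounded: Assumption \ref{5cl}(ii) gives $\mathrm{Eigmax}(V_d)\le C$ and $\mathrm{Eigmin}(\Sigma)\ge c>0$ so that $\|\Gamma\|_{l_2}=1/\mathrm{Eigmin}(\Sigma)\le 1/c$, and hence $e_j'\Gamma V_d\Gamma' e_j\le C/c^2$. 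Combining, $\hat\sigma_{bj}=O_p(1)$ uniformly in $\beta_0$, so the diameter $2z_{1-\alpha/2}\hat\sigma_{bj}/n^{1/2}=O_p(n^{-1/2})$ uniformly, which is (iii).

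\textbf{Main obstacle.} Essentially all real work has been absorbed into Theorem \ref{thmcl1}; the only genuinely new item here is passing from uniform weak convergence to uniform Kolmogorov convergence in (i). The continuity and boundedness of $\Phi$ make this a standard grid argument, so no serious difficulty is expected, provided the uniformity structure of Theorem \ref{thmcl1}(i) is indeed pointwise-in-$t$ and uniform-in-$\beta_0$ (which it will be, because the underlying high-dimensional CLT is applied pointwise in $t$).
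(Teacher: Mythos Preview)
Your proposal is correct, and for parts (ii) and (iii) it coincides with the paper's argument essentially line for line: the paper also derives (ii) by pivoting on the event $\{|t_{W_d}(\beta_0)|\le z_{1-\alpha/2}\}$ (deferring details to an analogous result in \cite{ck18}), and derives (iii) by combining Theorem~\ref{thmcl1}(ii) with the eigenvalue bounds in Assumption~\ref{5cl}(ii) to conclude $\hat\sigma_{bj}=O_p(1)$ uniformly.

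For part (i), however, you and the paper take different routes. You treat Theorem~\ref{thmcl1}(i) as already delivering $\sup_{\beta_0}|P(t_{W_d}(\beta_0)\le t)-\Phi(t)|\to 0$ for each fixed $t$, and then run a Polya grid argument to upgrade this to uniformity in $t$. The paper instead reopens the decomposition underlying Theorem~\ref{thmcl1}: it defines events $A_{1n},A_{2n},A_{3n}$ (controlling $|e_j'\Delta|$, the ratio $\hat\sigma_{bj}/\sigma_{bj}$, and the numerator discrepancy, all uniformly over $\mathcal{B}_{l_0}(s_0)$), and on their intersection sandwiches $P(t_{W_d}(\beta_0)\le t)$ between $P\bigl(t_{W_{d1}^*}\le t(1\pm\epsilon)\pm 2D\epsilon\bigr)$, where $t_{W_{d1}^*}$ is the \emph{infeasible} statistic built from $\Gamma,\Sigma_{xz},W_d$ and hence does not depend on $\beta_0$ at all. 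Uniformity in $\beta_0$ then comes for free from this $\beta_0$-independence, and uniformity in $t$ from the uniform continuity of $\Phi$. Your Polya route is cleaner and more modular, but as you yourself flag, it requires reading Theorem~\ref{thmcl1}(i) as pointwise-in-$t$ uniform-in-$\beta_0$ cdf convergence---which is true, but to verify it one must in effect carry out the paper's sandwich step anyway. The paper's approach bypasses this interpretive issue by working directly with the decomposition.
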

Part (i) of Theorem \ref{thm2c} asserts the uniform convergence to the normal distribution of the properly centered and scaled $\hat{b}_j$. Part (ii) is a consequence of  (i) and yields the asymptotic uniform validity of confidence intervals based on $\hat{b}_j$. Finally, (iii) asserts that the confidence intervals contract at rate $\sqrt{n}$ uniformly over $\mathcal{B}_{l_0}(s_0)$. We also stress that the above results do \emph{not} rely on a $\beta_{\min}$-type condition requiring the non-zero entries of $\beta_0$ to be bounded away from zero.  In other words, all our results allow for local to zero structural parameters.

\subsection{Linear dynamic panel data models as a special case}\label{sec:dynpan}
In this section we show how the classic dynamic linear panel data model as studied in, e.g., \cite{arellano1991some} is covered by our framework as a special case upon taking first differences. To be precise, we consider the model
\[ y_{it} = \rho_0 y_{i t-1} + x_{it}' \delta_0 + \mu_i + u_{it},\ i=1,..., n,\ t=1,..., T\]
where $|\rho_0|<1$. $y_{it}$ is a scalar, $x_{it}$ is a $K \times 1$ vector of strictly exogenous variables and $\mu_i$ is the unobserved effect for individual $i$ which can be correlated with $y_{i t-1}$ and $x_{it}$. Assume, for concreteness, that $y_{i0}=0$ for $i=1,...,n$. Since $(\rho_0,\delta_0)$ is the parameter of interest we have $p=K+1$ in the terminology of our paper. The $\mu_i$ can be removed by taking first differences, arriving at
\begin{equation}
\Delta y_{it} = \rho_0 \Delta y_{i t-1} + \Delta x_{it}' \delta_0 + \Delta u_{it},\ i=1,...,n,\ t=2,....,T,\label{dp1}
\end{equation}
Upon stacking the observations across individuals and time, (\ref{dp1}) is of the form (\ref{eq:mod}). Next, imposing 
\begin{align}
E [ u_{it} | \mu_i, y_i^{t-1}, x_i^T]=0,\ i=1,...,n,\ t=1,...,T,\label{eq:dynpanmom}
\end{align}
where $y_i^{s} = (y_{i1},..., y_{is})'$  and $x_i^T = (x_{i1}',..., x_{iT}')'$ implies that for each $i\in\cbr[0]{1,...,n}$ 
\begin{equation}
 E [ y_i^{t-2} \Delta u_{it} ]= 0,\ t=3,...,T,\label{dp2}
 \end{equation}
 \begin{equation}
 E [x_{it} \Delta u_{is	} ] = 0,\ t=1,...,T, s=2,....,T.\label{dp3}
 \end{equation}
This results in $q=(T-2)(T-1)/2+T(T-1)K$ moment inequalities for each $i=1,...,n$ thus fitting into (\ref{eq:GMMcond}). In particular we note that the number of instruments $q$ can be larger than the sample size $n(T-2)$ even for moderate values of $T$ and $K$ thus resulting in a setting with many moments/instruments compared to the number of observations as studied in this paper

\section{Tuning Parameter Choice}\label{tuning}
In this section we explain how we choose the tuning parameter sequences $\lambda_n$ and $\lambda_n^*$. We use cross validation since this has recently been shown by \cite{clc17} to result in Lasso estimators with guaranteed low finite sample estimation and prediction error in the context of high-dimensional linear regression models. While the theoretical guarantees are for the linear regression model without endogenous regressors and sub-gaussian error terms we still use cross validation here and are content to leave the big task of establishing theoretical guarantees of cross validated two-step desparsified Lasso GMM in the presence of endogenous regressors for future work.

The exact implementation of the cross validation used is as follows: Let $\hat{W}\in\cbr[0]{I_q, \hat{W}_d}$ (indicating whether the first step or second step Lasso GMM estimator is used) and $K\in\mathbb{N}$ be the number of cross validation folds. Assuming, for simplicity, that $n/K$ is an integer we let $I_k=\cbr[0]{\frac{k-1}{K}n +1,\frac{k}{K}n},\ k=1,...,K$ be a partition of $\cbr[0]{1,...,n}$ consisting of ``consecutive'' sets. Fix a $\lambda\in\Lambda_n\subseteq \mathbb{R}$ where $\Lambda_n$ is the candidate set of tuning parameters. With $\underline{n}$ being the cardinality of the $I_k$ define 
\begin{equation}
 \hat{\beta}_{-k} (\lambda) = 
\argmin_{b \in \mathbb{R}^p}  \left\{  \left[  \frac{1}{n- \underline{n}} \sum_{ i \notin I_k}  Z_i ( Y_i - X_i' b)\right]' \frac{\hat{W}}{q} \left[  \frac{1}{n- \underline{n}} \sum_{ i \notin I_k} Z_i ( Y_i - X_i' b) \right] + \lambda  \| b \|_1 \right\}, \label{6.1}
\end{equation}
for $k=1,...,K$ and choose $\lambda$ as
\begin{equation}
 \hat{\lambda}_{CV} = \argmin_{ \lambda  \in \Lambda_n} \sum_{k=1}^K 
\sbr[2]{ \sum_{i \in I_k}  Z_i (Y_i - X_i'  \hat{\beta}_{-k}  (\lambda)}' \frac{\hat{W}}{q}  \sbr[2]{ \sum_{ i \in I_k} Z_i (Y_i - X_i'  \hat{\beta}_{-k} (\lambda)) } .\label{6.2}
\end{equation}
The concrete choices of $K$ and $\Lambda_n$ are given in Section \ref{mc}.

We also tried a  a modified BIC to choose $\lambda_n^*$. Recently, only in the case of $n>q>p$, where $q,p$ grow with sample size, \cite{chl18} provided a theorem that shows that this type of choice leads to selection consistency with adaptive elastic net penalty for GMM. However, here we consider $q>p>n$ and the Lasso so this result does not apply. Our preliminary simulations showed poor performance for the modified BIC in our context so we did not pursue it further.

We also used a tuning parameter choice based on our theorems. We start with first step GMM estimator. The issue is where we use this first step gmm, and tuning parameter associated with that in proofs. Our interest is in inference. There are two main issues with tuning parameter. First one: it provides an upper bound on the noise term in the oracle inequality with probability approaching one. The second one is: it has to show that asymptotic bias of the first step GMM lasso estimates converge in probability to zero. In that sense there is a tradeoff. The first issue forces choice of tuning parameter to be large and the second bias concern forces the tuning parameter to be small. In our preliminary simulation, we tried a theoretically oriented choice of tuning parameter that satisfy these two criteria, but this also resulted in poor finite sample results in inference, so we did not recommend that.

\subsection{ A Recipe for  Using High Dimensional Linear GMM}\label{recipe}

In this subsection we provide a step by step guide to implement the de-biased estimator. The recipe consists of three parts. In the part A we provide steps to implement Lasso-GMM, then in part B, we show how to use CLIME, and in part C we form the de-biased GMM and set up the test statistic.

{\bf A: Lasso-GMM}

1. Use (\ref{6.1}) and (\ref{6.2}) with $\hat{W}= I_q$ to choose the tuning parameter $\hat{\lambda}_{CV}$ for the first step Lasso GMM estimator.

2. Use (\ref{eq:FSGMM}) with $\hat{\lambda}_{CV}$ from step 1 to get $\hat{\beta}_F$.

3. Define the residuals $\hat{u}_i := Y_i - X_i' \hat{\beta}_F$, for $i=1,\cdots, n$, and define for each $l=1,\cdots, q$
$\hat{\sigma}_l^2 := \frac{1}{n} \sum_{i=1}^n Z_{il} \hat{u}_i^2$; here $Z_{il}$ is the $l$th instrument in $i$ th cross section unit. Form the diagonal 
$q \times q$ matrix
\[ \hat{W}_d = diag (\frac{1}{\hat{\sigma}_1^2}, \cdots, \frac{1}{\hat{\sigma}_l^2}, \cdots, \frac{1}{\hat{\sigma}_q^2}).\]

4. Use (\ref{6.1}) and (\ref{6.2}) with $\hat{W}= \hat{W}_d$ to choose the tuning parameter $\hat{\lambda}_{CV}^*$ for two-step Lasso GMM estimator.

5. Use (\ref{eq:1}) with $\hat{\lambda}_{CV}^*$ from step 4 to get $\hat{\beta}$.

{\bf B: CLIME}

1. For $j=1,\cdots, p$, set 
\[ \hat{\mu}_j = 1.2 inf_{a \in R^p} \| a \hat{\Sigma} - e_j' \|_{\infty},\]
where the minimization problem is solved by the MOSEK optimizer.

2. After obtaining $\hat{\mu}_j$ for all $j=1,\cdots, p$, solve (\ref{eq:CLIME}) to obtain $\hat{\Gamma}_j$ to form $\hat{\Gamma}$ from the rows 
$\hat{\Gamma}_j$. This is done by the MOSEK optimizer.

{\bf C: De-Biased GMM}

1. Calculate $\hat{b}$ as in (\ref{13a}).

2. Calculate $\hat{V}_d$ as in (\ref{5.1}).

3. Form the test statistic as in (\ref{5.0}) to test the desired hypothesis.

\section{Monte Carlo}\label{mc}
In this section we investigate the finite sample properties of the desparsified two-step GMM Lasso (DGMM) estimator and compare it to the desparsified two stage least squares (D2SLS) estimator of \cite{glt18}. All designs are repeated $B=100$ times as the procedures are computationally demanding. Before discussing the results, we explain how the data was generated and the performance measures used to compare DGMM to D2SLS.

\subsection{Implementation details}
The implementation of the D2SLS of \cite{glt18} is inspired by the publicly available code at \texttt{https://github.com/\allowbreak LedererLab/\allowbreak HDIV/blob/master/\allowbreak src/estimation.r}. %For the GMM estimator we use $K=5$ cross validation folds for the first step as well as the second step estimator (as is also done in the above code). 
We use five fold cross validation, $K=5$, to select $\lambda_n$ and $\lambda_n^*$. $\Lambda_N$ is chosen by the \texttt{glmnet} package in R. As in \cite{glt18} we choose  $\mu_j =  1.2\cdot\inf_{a \in \mathbb{R}^p} \|  a\hat{\Sigma} - e_j' \|_{\infty},$ for ($a$ is a row vector) $j=1,2,...,p$ and the minimization problem is solved by the MOSEK optimizer for R, \cite{mosek}.  We also tried replacing 1.2  by 1.5 in the calculation of $\mu_j$ but this did not affect the inferential results. %Note that the reason for choosing such a constant, which is above 1, stems from (\ref{eq:CLIME}) where the right hand side of the inequality should be obviously higher than the left side in practice. We cannot choose a large constant since estimation of the precision matrix with CLIME will be badly affected as can be seen in Lemma A.9.

\subsection{Design 1}
This design is inspired by the heteroskedastic design in \cite{chl18}. We choose
\begin{align*}
Z_i\sim N_q(0,\Omega) \quad \text{with}\quad \Omega_{j,k}=\rho_z^{|j-k|} 
\end{align*}
for $\rho_z=0.5$ and set
\begin{align*}
X_i=\pi'Z_i +v_i
\end{align*}
where, for $\iota_k$ being a $k\times 1$ vector of ones, we choose the $q\times p$ matrix $\pi = [ 2+ 2 \rho_z^{q/2}]^{-1/2} ( \iota_2 \otimes I_{q/2})$. Thus, $p=q/2$. Furthermore,
\begin{align*}
Y_i=X_i'\beta_0+u_i
\end{align*}
with $\beta_0=(1,1,0_{p-8}',0.5,0_5')'$ and $0_k$ being a $k\times 1$ vector of zeros.  Thus, $\beta_0$ has three non-zero entries. The following notation is introduced to define the joint distribution of $v_i$ and $u_i$: Let $\epsilon_i=(\epsilon_{i1},\epsilon_{i2}, \epsilon_{i3})\sim N_{p+2}(0, I_{p+2})$ where $\epsilon_{i1}$ and $\epsilon_{i2}$ are scalars and $\epsilon_{i3}$ is $p\times 1$. 
\begin{align*}
\tilde{u}_i = \sqrt{\rho_{uv}} \epsilon_{1i} + \sqrt{1 - \rho_{uv}} \epsilon_{2i} \qquad \text{and}\qquad v_i = \sqrt{\rho_{uv}} \epsilon_{1i} \iota_p + \sqrt{1 - \rho_{uv}} \epsilon_{3i}
\end{align*}
with $\rho_{uv}=0.25$. Then, in order to introduce conditional heteroskedasticity into $u_i$, we set $u_{i} = \tilde{u}_i \| Z_i \|_2/\sqrt{q}$. The following combinations of $n,p$ and $q$ are considered:
\begin{align*}
(n,p,q)\in \bigl\{&(50,50,100),(75,50,100),(75,10,20), (75,100,200), (150,100,200), (150,10,20), (150,200,400),\\ &(300,200,400), (300,10,20)\bigr\}.
\end{align*}
Note that these designs are in three categories: i) many moments/instruments and variables; $q>p>n$, ii) many moments/instruments; $q>n\geq p$ and iii) standard asymptotics; $n>q>p$. In the Tables \ref{tab:1}-\ref{tab:3} the results for these three settings can be found in columns i), ii) and iii), respectively.

%
%\begin{align*}
%\underbrace{\left( \begin{array}{c}
%(50,50,100) \\
%(100,50,100) \\
%(100,100,200)\\
%(200,100,200)\\
%\end{array}
%\right)}_{(n,p,q)}\times
%\underbrace{\left( \begin{array}{c}
%(0.5,0.25) \\
%(0.75,0.75) \\ 
%\end{array}
%\right)}_{(\rho_z, \rho_{uv})}
%\end{align*}

%*************************
%
%
%The instruments are iid across $i$ and  $Z_i \sim N (0, \Omega)$ where $\Omega, (j,k)$ th cell is 
%$\rho_z^{|j-k|}$, with $\rho_z=0.5, 0.75$. The reduced form is:
%\[ X_i = \pi' Z_i + v_i,\]
%where $\pi = [ 2+ 2 \rho_z^{q/2}]^{-1/2} ( \iota_2 \bigotimes I_{q/2}).$ $\iota_2$ is a $2 \times 1$ vector of ones. The structural equation is:
%\[ Y_i = X_i' \beta_0 + u_{zi},\] 
%
%\noindent and $\epsilon_i \sim N (0, I_{p+2})$ with $\epsilon_{i} = (\epsilon_{1i}, \epsilon_{2i}, \epsilon_{3i})$ and $\epsilon_{3i}$ is $p \times 1$
%\[ \tilde{u}_i = \sqrt{\rho_{uv}} \epsilon_{1i} + \sqrt{1 - \rho_{uv}} \epsilon_{2i},\]
%\[ v_i = \sqrt{\rho_{uv}} \epsilon_{1i} \iota_p + \sqrt{1 - \rho_{uv}} \epsilon_{3i},\]
%with $\rho_{uv} =0.25, 0.75$, $\iota_p$ is a $p$ vector of ones. 
%Then  we set $u_{zi} = u_i \| Z_i \|_2/\sqrt{q}$. We also have $\beta_0 = (1,1,0_{p-s-5},0.5, 0_5)'$, where $s=3$ and $p$ will vary, $0_{p-s-5}$ is zeros of dimension
%$p-s-5$, and $0_5$ is a vector of 5 zeros. We set $q=100, 200, 400$, and $p=q/2$, $n=100, 200, 400$. We have 100 iterations here as well due to CLIME-MOSEK.

\subsection{Design 2}
Everything is as in Design 1 except for $\pi=1_{q,p}/q$ where $\pi=1_{q,p}$ denotes a $q\times p$ matrix of ones. Thus, all instruments are (weakly) relevant. 

\subsection{Design 3}
Everything is as in Design 1 except for $\pi=(0.25\cdot 1_{p,q/4},0_{p,3/4\cdot q})'$ where $1_{p,q/4}$ is a $p\times q/4$ matrix of ones and $0_{p,3/4\cdot q}$ is a $p\times 3/4\cdot q$ matrix of zeros.

\subsection{Performance measures}

The performance of D2SLS and DGMM are measured along the following dimensions.
\begin{enumerate}
\item Size:  The size of the test in (\ref{eq:test}) is gauged by considering a test on $\beta_{0,2}$ as in applied work interest often centers on a single coefficient (of the policy variable). The null hypotheses is always that this coefficient equals the true value assigned to (here the true value is always one). The nominal size of the test is 5\%.
\item Power: To gauge the power of the test we test whether $\beta_{0,j}$ equals its assigned value plus $1/2$ in Design 1. In Designs 2 and 3 we test whether $\beta_{0,j}$ equals its assigned value (which is 1) plus 1.5. The difference in alternatives is merely to obtain non-trivial power comparisons (i.e. to avoid either the power of all tests being (very close to) zero or (very close to) one).
\item Coverage rate: Let  $\hat{C}_j(\alpha)=\sbr[1]{ 
\hat{b}_j - z _{1 - \alpha/2} \frac{\hat{\sigma}_{bj}}{n^{1/2}}, \hat{b}_j + z_{1 - \alpha/2}\frac{\hat{\sigma}_{bj}}{n^{1/2}}},\ j=1,...,p$ be the confidence intervals from Theorem \ref{thm2c}. We calculate the average coverage rate across all $p$ entries of $\beta_0$ and $B=100$ Monte Carlo replications. We use $\alpha=0.05$ throughout.
%
%\begin{align}
%\frac{1}{p} \sum_{j=1}^p  \frac{1}{B} \sum_{k=1}^{B} 1_{ \{ \beta_{0j} \in \hat{C}_j(\alpha) \}},\label{cov}
%\end{align}
\item Length of confidence interval: We report the average length of the confidence intervals from Theorem \ref{thm2c} over all $p$ entries of $\beta_0$ and $B=100$ Monte Carlo replications.
\item MSE: We calculate the mean square error of $\hat{b}$ across all $B=100$ Monte Carlo replications
\end{enumerate}

%We define the coverage as 
%\begin{equation}
% avecov= \frac{1}{p} \sum_{j=1}^p  \frac{1}{ite} \sum_{k=1}^{ite} 1_{ \{ \beta_{0,j} \in \hat{C}_j(\alpha) \}},\label{cov}
% \end{equation}
%where  for DGMM
%\[ \hat{I}_j = [ \hat{b}_j - z_{\alpha/2} se (\hat{b}), \hat{b}_j + z_{\alpha/2} se(\hat{b}_j].\]
%for D2SLS
%\[ \hat{I}_j = [ \tilde{\beta}_j - z_{\alpha/2} se (\tilde{\beta}), \tilde{\beta}_j + z_{\alpha/2} se(\tilde{\beta}_j].\]
%Also we define in case of DGMM
%\[ se(\hat{b}_j) = \sqrt{e_j' \hat{\Gamma} \hat{V}_d \hat{\Gamma}' e_j}/n^{1/2},\]
%and in case of D2SLS, from \cite{glt18}  equations (18)-(19) which is as shown in their Lemma 4.10
%\[ se(\tilde{\beta}_j) = \hat{\omega}_j/n^{1/2},\]
%where $\hat{\omega}_j$ is consistent homoskedastic error estimate. The length of the interval is: (with length () is the length of the specific confidence interval for a given coefficient)
%\begin{equation}
% avelen= \frac{1}{p} \sum_{j=1}^p \frac{1}{ite} \sum_{k=1}^{ite} length (\hat{I}_j).\label{len}
% \end{equation}
%
%\noindent The average MSE  for DGMM is:
%\begin{equation}
% MSE = \frac{1}{ite} \sum_{k=1}^{ite} \frac{1}{p} \sum_{j=1}^p ( \hat{b}_j - \beta_{0j})^2,\label{mse}
% \end{equation}
%\noindent and average MSE for D2SLS is:
%\begin{equation}
%MSE = \frac{1}{ite} \sum_{k=1}^{ite} \frac{1}{p} \sum_{j=1}^p ( \tilde{\beta}_j - \beta_{0j})^2.\label{mse2}
%\end{equation}

\subsection{Results of simulations}
In this section we report the results of our simulation study.

\subsubsection{Design 1}
Table \ref{tab:1} contains the results of Design 1. Our DGMM procedure is oversized (size above 5\%) in 2 out of the 9 panels while the D2SLS is oversized in 1 out of 9 panels. In general, both procedures tend to be slightly undersized, however. Our DGMM procedure is non-inferior in terms of power in 7 out of 9 panels and achieves power advantages of up to 38\%-point. Both procedures always have at least 95\% coverage but the intervals produced by the DGMM procedure are more narrow in 7 out of 9 panels. Thus, the intervals are more informative.  Finally, the MSE of the  DGMM estimator is lower than the one of the D2SLS estimator in 8 out 9 panels; sometimes by more than a factor 10.

\subsubsection{Design 2}
Table \ref{tab:2} contains the results of Design 2. Recall that this is a setting where $\pi$ is not sparse. While our DGMM estimator is oversized in 2 out 9 panels, the D2SLS is oversized in 7 out of 9 panels with sizes of up to 75\%. Despite having generally lower size, the DGMM procedure has higher power than the D2SLS procedure in 8 out of 9 panels. Furthermore, the DGMM procedure does not exhibit undercoverage in terms of its confidence intervals for any of the 9 panels while D2SLS undercovers in 4 out 9 panels. The higher coverage of DGMM does not come at the price of longer confidence intervals as DGMM confidence intervals are always more narrow than the ones stemming from D2SLS. Finally, the MSE is always lower for DGMM.

\subsubsection{Design 3}
Table \ref{tab:3} contains the results of Design 3. This design strikes a middle ground between Designs 1 and 2 in terms of the sparsity of $\pi$. The tests based on DGMM and D2SLS are both oversized in 1 out of 9 panels. However, the former procedure results in more powerful tests than the latter in 6 out 9 panels. The largest power advantage of DGMM over D2SLS is 66\%-point while the largest advantage of D2SLS over DGMM is 18\%-point. The DGMM procedure always has at least 95\% coverage while this is the case for 8 out of 9 panels for the D2SLS procedure. However, the DGMM procedure has a tendency to overcover. This tendency is less pronounced for the D2SLS procedure. Despite this fact, the confidence intervals resulting from the DGMM procedure are shorter than the ones stemming from the D2SLS procedure in 5 out of 9 panels. The DGMM procedure always has lower MSE than D2SLS.

  \begin{table}[h]
\centering
\begin{tabular}{|c|cc|cc|cc|}
\toprule 
 & \multicolumn{6}{|c|}{Design 1}\\ \hline
\hline &\multicolumn{2}{c|}{$n=50,p=50,q=100$}
& \multicolumn{2}{c|}{$n=75,p=50,q=100$}& \multicolumn{2}{|c|}{$n=75, p=10, q=20$} \\ \hline
&D2SLS & D2GMM
& D2SLS & D2GMM& D2SLS  & D2GMM \\ \hline
Size  &17\%& 21\% &2\% & 17\%& 4\% & 2\%\\ 
Power &  7\% & 32\% &19\% & 47\%& 47\% & 46\%\\ 
  Coverage & 0.9670& 0.9574&0.9771& 0.9992& 0.9710 & 0.9820\\ 
  Length & 34.8783& 8.9940 &1.8287&0.9578& 0.9629 & 0.9786 \\ 
  MSE & 701.667&1.9198 &0.1985& 0.0315& 0.0545 & 0.0333 \\  \hline \hline
  &  \multicolumn{2}{c|}{$n=75,p=100,q=200$}
     & \multicolumn{2}{c|}{$n=150,p=100,q=200$} &\multicolumn{2}{|c|}{$n=150, p=10, q=20$}\\ \hline   
  &D2SLS & D2GMM & D2SLS & D2GMM & D2SLS & D2GMM \\ \hline 
Size & 0\% & 0\% &  1\% & 0\%& 2\% & 3\%\\ 
Power  & 40\%  & 78\% &36\% & 70\%& 95\% & 82\%\\ 
  Coverage & 0.9741 & 0.9897 &0.9640& 0.9957& 0.9720 & 0.9630\\ 
  Length & 1.6973 & 0.7616 &1.1276&0.8289& 0.5705 & 0.6255 \\ 
  MSE&  0.1615 & 0.0160 &0.0759& 0.0173 & 0.0169 & 0.0188 \\  \hline \hline
  & \multicolumn{2}{c|}{$n=150,p=200,q=400$} 
    & \multicolumn{2}{c|}{$n=300,p=200,q=400$} &\multicolumn{2}{|c|}{$n=300, p=10, q=20$}\\ \hline   
   & D2SLS & D2GMM
   & D2SLS & D2GMM & D2SLS & D2GMM\\ \hline
Size   & 2\% & 0\% &4\% & 0\% & 1\% & 3\%\\ 
Power  &  62\% & 97\% & 83\% & 97\%& 100\% & 100\%\\ 
  Coverage&  0.9709 & 0.9963 & 0.9537& 0.9973& 0.9650 & 0.9710\\ 
  Length & 0.9611 & 0.5521 &0.6792&0.5750& 0.3531 & 0.3941 \\ 
  MSE &  0.0477 & 0.0080 &0.0289& 0.0080 & 0.0065 & 0.0037\\   
  \hline   
\end{tabular}
\caption{}
 \label{tab:1}
\end{table}

  \begin{table}[h]
\centering
\begin{tabular}{|c|cc|cc|cc|}
\toprule 
 & \multicolumn{6}{|c|}{Design 2}\\ \hline
\hline 
&  \multicolumn{2}{c|}{$n=50,p=50,q=100$}
& \multicolumn{2}{c|}{$n=75,p=50,q=100$}& \multicolumn{2}{|c|}{$n=75, p=10, q=20$} \\ \hline
& D2SLS & D2GMM
& D2SLS & D2GMM& D2SLS  & D2GMM \\ \hline
Size & 56\% & 21\%  &57\% & 16\%& 32\% & 1\%\\ 
Power & 59\% & 36\%  &66\% & 93\%& 47\% & 66\%\\ 
  Coverage & 0.9572 & 0.9634 &0.9576& 0.9692& 0.9040 & 0.9890\\ 
  Length & 51.8943 & 14.0387 &57.6295&1.3173& 10.6126 & 2.3726 \\ 
  MSE & 9354.21 & 26.4915 &10472.80& 0.0670& 46.2285 & 0.2014 \\  \hline \hline
   &  \multicolumn{2}{c|}{$n=75,p=100,q=200$}
       & \multicolumn{2}{c|}{$n=150,p=100,q=200$} &\multicolumn{2}{|c|}{$n=150, p=10, q=20$}\\ \hline   
   & D2SLS & D2GMM 
   & D2SLS & D2GMM & D2SLS & D2GMM \\ \hline
Size & 61\% & 5\%  &74\% & 0\%& 9\% & 0\%\\ 
Power& 63\% & 100\%   &76\% & 100\%& 33\% & 85\%\\ 
  Coverage& 0.9523 & 0.9848   &0.9362& 0.9930& 0.9580 & 0.9890\\ 
  Length & 73.0592 & 0.9154  &72.2950&1.0717& 5.5107 & 2.2228 \\ 
  MSE & 30676.17 & 0.0303 &323884.4& 0.0339 & 15.1127 & 0.1712 \\  \hline \hline
  & \multicolumn{2}{c|}{$n=150,p=200,q=400$}   
  & \multicolumn{2}{c|}{$n=300,p=200,q=400$} &\multicolumn{2}{|c|}{$n=300, p=10, q=20$}\\ \hline   
   & D2SLS & D2GMM  
   & D2SLS & D2GMM & D2SLS & D2GMM\\ \hline
Size& 66\% & 2\%   &74\% & 0\% & 1\% & 0\%\\ 
Power& 67\% & 100\%   &75\% & 100\%& 63\% & 83\%\\ 
  Coverage& 0.8230 & 0.9921 &0.8126& 0.9947& 0.9830 & 0.9820\\ 
  Length& 106.34 & 0.6914 &145.48&0.7522& 2.9207 & 2.0897 \\ 
  MSE& 110527.2 & 0.0150 &137180.5& 0.0172 & 0.5160 & 0.1907\\   
  \hline 
\end{tabular}
\caption{}
 \label{tab:2}
\end{table}

  \begin{table}[h]
\centering
\begin{tabular}{|c|cc|cc|cc|}
\toprule 
& \multicolumn{6}{|c|}{Design 3-Semi-Sparse No of Instruments}\\ \hline
\hline
 & \multicolumn{2}{c|}{$n=50,p=50,q=100$}
 & \multicolumn{2}{c|}{$n=75,p=50,q=100$}& \multicolumn{2}{|c|}{$n=75, p=10, q=20$} \\ \hline
& D2SLS & D2GMM
& D2SLS & D2GMM& D2SLS  & D2GMM \\ \hline
Size & 8\% & 10\%  &1\% & 0\%& 1\% & 1\%\\ 
Power & 13\% & 19\%   &45\% & 71\%& 29\% & 68\%\\ 
  Coverage& 0.9404 & 0.9802 &0.9646& 0.9956& 0.9760 & 0.9860\\ 
  Length& 32.0919 & 14.1922 &3.5628&2.4496& 11.3496 & 2.3582 \\ 
  MSE& 550.36& 4.3972 &0.9067& 0.0912& 53.7831 & 0.2082 \\  \hline \hline
   & \multicolumn{2}{c|}{$n=75,p=100,q=200$}
      & \multicolumn{2}{c|}{$n=150,p=100,q=200$} &\multicolumn{2}{|c|}{$n=150, p=10, q=20$}\\ \hline   
   & D2SLS & D2GMM 
   & D2SLS & D2GMM & D2SLS & D2GMM \\ \hline
Size& 2\% & 0\%   &4\% & 0\%& 0\% & 0\%\\ 
Power& 99\% & 81\%   &78\% & 81\%& 18\% & 83\%\\ 
  Coverage& 0.9514 & 0.9995 &0.9706& 0.9999& 0.9740 & 0.9870\\ 
  Length& 1.0409 & 2.2356 &1.9492&2.2473& 9.0591 & 2.2711 \\ 
  MSE& 0.0715 & 0.0372 &0.2123& 0.0451 & 16.0593 & 0.1971 \\  \hline
  & \multicolumn{2}{c|}{$n=150,p=200,q=400$}
    & \multicolumn{2}{c|}{$n=300,p=200,q=400$} &\multicolumn{2}{|c|}{$n=300, p=10, q=20$}\\ \hline   
   & D2SLS & D2GMM
   & D2SLS & D2GMM & D2SLS & D2GMM\\ \hline
Size& 4\% & 0\%   &3\% & 0\% & 3\% & 2\%\\ 
Power& 100\% & 92\%   &98\% & 93\%& 19\% & 85\%\\ 
  Coverage& 0.9543 & 1.00&0.9701& 1.00& 0.9660 & 0.9730\\ 
  Length& 0.6188 & 2.2290&1.1898&2.1920& 7.3244 & 2.0423 \\ 
  MSE& 0.0266 & 0.0192 &0.0716& 0.0223 & 8.6713 & 0.2151\\   
  \hline 
\end{tabular}
\caption{}
 \label{tab:3}
\end{table}

\section{Conclusion}
This paper proposes a desparsified GMM estimator for estimating high-dimensional linear models with more endogenous variables than the sample size. The inference based on the estimator is shown to asymptotically uniformly valid even in the presence of conditionally heteroskedastic error terms. We do not impose the variables of the model to be sub-gaussian  nor do we impose sparsity on the instruments. Finally, our results are shown to apply also to linear dynamic panel data models. Future work includes investigating the effect of  the presence of (many) invalid instruments and potential remedies to this.

%The paper here provides a high dimensional GMM estimator as well as asymptotically honest confidence intervals for parameters of interest. An approximate inverse for the singular sample variance is shown.  Inference on a parameter is shown.  The limit of our test is standard normal, hence easily usable. 
% Efficiency of the GMM estimator with a specific weight function should be analyzed as a further study when the data is heteroskedastic.
%Another  possible extension  will be the analysis of many invalid instruments in case of dynamic panel data context.

\setcounter{equation}{0}\setcounter{lemma}{0}\setcounter{assum}{0}\renewcommand{\theequation}{A.%
\arabic{equation}}\renewcommand{\thelemma}{A.\arabic{lemma}}%
\renewcommand{\theassum}{A.\arabic{assum}}%
\renewcommand{\baselinestretch}{1}\baselineskip=15pt

\section*{Appendix}
This appendix consists of three parts. The first part is related to Theorem \ref{1}. The second part is related to estimation of the precision matrix. The third part  considers the asymptotic properties of the new de-sparsified high dimensional GMM estimator.

Lemmas A.1-A.5 establish results that are used in the proof of Theorem \ref{1} and parts of other proofs.
 
Occasionally, we allow constants such as $C$ and $K$ to change from line to line and display to display. Except for in Lemmas \ref{su2} and \ref{su3} $\kappa_n=\ln q$ throughout this appendix, cf. also the remark prior to Assumption \ref{s3}.
 
\subsection{Two auxilliary lemmas}
We first provide concentration inequalities for maxima of centered iid sums. These are taken directly from Lemmas E.1 and E.2 of \cite{cck16} specialized to our iid setting to simplify the used conditions slightly. They can be found in Lemmas \ref{su2} and \ref{su3}. To set the stage assume that $F_i = (F_{i1},..., F_{ij},..., F_{id})'\in \mathbb{R}^d$ and that the vectors are iid across $i=1,...,n$. %Take a generic random vector of $d$ random variables $F_{i}$, $F_i = (F_{i1}, \cdots, F_{ij}, \cdots, F_{id})'$.  
Define 
\[ \max_{1 \le j \le d} | \sum_{i=1}^n (F_{ij} - E F_{ij})| = n \max_{1 \le j \le d} | \hat{\mu}_j - \mu_j |,\]
where $\hat{\mu}_j = n^{-1} \sum_{i=1}^n F_{ij}, \mu_j = E F_{ij}$. Next, define
\[ M_F = \max_{1 \le i \le n} \max_{1 \le j  \le d} | F_{ij} - E F_{ij}|,\]
and 
\[ \sigma_F^2 = \max_{1 \le j \le d} \sum_{i=1}^n E [ F_{ij} - E F_{ij}]^2 = n \max_{1 \le j \le d} var(F_{ij}).\]
From Lemma E.1 of \cite{cck16} one has that there exists a universal constant $K>0$ such that
\[ n E \max_{1 \le j \le d} | \hat{\mu}_j - \mu_j| 
\le K [ \sqrt{n \max_{1 \le j \le d} var (F_{ij})} \sqrt{\ln d} + \sqrt{E M_F^2} \ln d],\]
which implies
\begin{equation}
E \max_{1 \le j \le d} | \hat{\mu}_j - \mu_j | \le K [ \frac{\sqrt{\max_{1 \le j \le d} var (F_{ij}) \ln d}}{\sqrt{n}} + \frac{\sqrt{E M_F^2} \ln d }{n}]
.\label{emax}
\end{equation}
Next, Lemma E.2(ii) of \cite{cck16} states that for all $ \eta >0, t>0, \gamma \ge 1$
\begin{align}
P \sbr[2]{ \max_{1 \le j \le d}  | \hat{\mu}_j - \mu_j | \ge 2 E \max_{1 \le j \le  d} | \hat{\mu}_j - \mu_j | + \frac{t}{n}}
&=
 P \sbr[2]{ n \max_{1 \le j \le d}  | \hat{\mu}_j - \mu_j |\ge (1 + \eta) n E \max_{1 \le j \le  d} | \hat{\mu}_j - \mu_j | + t}\notag\\
&\le 
 \exp(-t^2/3 \sigma_F^2) + K \frac{E M_F^{\gamma}}{t^{\gamma}}.\label{expineq1}
\end{align}

\begin{remark}\label{rem:genres}
For ease of reference, we state the versions of (\ref{emax}) and (\ref{expineq1}) appropriate for our purpose as a lemma. In the rest of the paper we shall use $\eta=1$ and $\gamma=2$. Furthermore, for the purpose of proving our theorems in the case of $q\geq p>n$, we introduce the sequence $\kappa_n$. This sequence will be chosen to equal $\ln(q)$. At the expense of a bit more involved notation and slightly altered assumptions, one can also set $\kappa_n=\max(\ln q,\ln n)$ in order to handle all possible regimes/orderings of $p,q$ and $n$. The following lemma is stated for a maximum over $d$ terms, where often in the sequel we will have $d=q$.
\end{remark}

\begin{assum}
Assume $F_i$ are iid random $d \times 1$ vectors across $i=1,..., n$ with $\max_{1 \le j \le d}  var(F_{ij})$ bounded from above. Finally, let $\kappa_n=\ln d$. \label{s3}
\end{assum}

\begin{lemma}\label{su2}
Under Assumption \ref{s3}

(i). \begin{eqnarray*}
E \max_{1 \le j \le d} | \hat{\mu}_j - \mu_j | &\le& K [ \frac{\sqrt{\max_{1 \le j \le d} var (F_{ij}) \ln d}}{\sqrt{n}} + \frac{\sqrt{E M_F^2} \ln d }{n}]
\\
& \le & K [ \sqrt{\frac{\ln d}{n}} + \frac{ \sqrt{E M_F^2} \ln d }{n}] .
\end{eqnarray*}

(ii) Set $t=t_n= (n \kappa_n)^{1/2}=(n \ln d)^{1/2}$. There exist constants $C,K>0$ such that
\begin{align*}
P \sbr[2]{\max_{1 \le j \le d}  | \hat{\mu}_j - \mu_j | \ge 2E \max_{1 \le j \le  d} | \hat{\mu}_j - \mu_j | + \frac{\kappa_n^{1/2}}{n^{1/2}}}
 \le  \exp(-C \kappa_n) + K \frac{E M_F^2}{n \kappa_n}
 =  \frac{1}{d^C} + K \frac{E M_F^2}{n \ln d}
\end{align*}

\end{lemma}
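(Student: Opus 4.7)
The statement is essentially a specialization of the already-quoted inequalities (\ref{emax}) and (\ref{expineq1}) from \cite{cck16} to our iid setting with $\max_{1\le j \le d}\mathrm{var}(F_{ij})$ uniformly bounded and $\kappa_n=\ln d$, so I would keep the proof short and mechanical.

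For part (i), the plan is to invoke (\ref{emax}) directly, which yields the first displayed inequality without any further argument. To pass to the second bound, I would simply absorb the uniform upper bound on $\max_{1\le j\le d}\mathrm{var}(F_{ij})$ (available by Assumption \ref{s3}) into the universal constant $K$, so that $\sqrt{\max_{1\le j\le d}\mathrm{var}(F_{ij})\ln d}/\sqrt{n}$ is dominated by a constant multiple of $\sqrt{\ln d/n}$.

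For part (ii), I would apply (\ref{expineq1}) with the specific choices $\eta=1$, $\gamma=2$, and $t=t_n=(n\kappa_n)^{1/2}=(n\ln d)^{1/2}$, which gives
\begin{equation*}
P\!\left[\max_{1\le j\le d}|\hat\mu_j-\mu_j|\ge 2E\max_{1\le j\le d}|\hat\mu_j-\mu_j|+\tfrac{t_n}{n}\right]\le \exp\!\left(-\tfrac{t_n^2}{3\sigma_F^2}\right)+K\tfrac{EM_F^2}{t_n^2}.
\end{equation*}
Since $\sigma_F^2=n\max_{1\le j\le d}\mathrm{var}(F_{ij})\le \bar C n$ by Assumption \ref{s3}, we have $t_n^2/(3\sigma_F^2)\ge \kappa_n/(3\bar C)$, so the first term is bounded by $\exp(-C\kappa_n)=d^{-C}$ with $C=1/(3\bar C)$. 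The second term is exactly $KEM_F^2/(n\kappa_n)=KEM_F^2/(n\ln d)$. Combining gives the claimed bound, and rewriting $t_n/n=(\kappa_n/n)^{1/2}$ matches the displayed form in the statement.

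The only mildly subtle point is making sure the constants $K$ and $C$ are truly universal (independent of $n,d$), which follows because the only data-dependent quantity being absorbed is the uniform upper bound on $\max_{1\le j\le d}\mathrm{var}(F_{ij})$ guaranteed by Assumption \ref{s3}; everything else descends from the universal constants in Lemmas E.1 and E.2 of \cite{cck16}. There is no real obstacle here — the work is entirely in correctly tracking how the choices $\eta=1$, $\gamma=2$, $t=t_n$ interact with the variance bound.
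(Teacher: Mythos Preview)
Your proposal is correct and matches the paper's treatment: the paper does not give a separate proof of this lemma but simply restates (\ref{emax}) and (\ref{expineq1}) with the choices $\eta=1$, $\gamma=2$, $\kappa_n=\ln d$, and $t=(n\kappa_n)^{1/2}$, absorbing the uniform variance bound from Assumption~\ref{s3} into the constants. Your write-up makes these substitutions explicit and correctly tracks how $\sigma_F^2\le \bar C n$ yields the $\exp(-C\kappa_n)$ term, which is exactly the intended argument.
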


Note that Nemirowski's inequality could have  been used as in Lemma 14.24 of \cite{bvdg2011}, where the iid case translates to
\[ E \max_{1 \le j \le d} | \hat{\mu}_j - \mu_j| \le \sqrt{8 ln d/n} \sqrt{E M_F^2},\]
which is less sharp than the results above. If we have used Nemirowski's result, we could have needed $E M_F^2 \le C < \infty$ which is very strong.

For the purpose of obtaining asymptotic results, we introduce the following assumption.

\begin{assum}
\[ \frac{(E M_F^{ 2})^{1/2}\sqrt{ \ln d}}{\sqrt{n}} \to 0.\] 
\label{s4}
\end{assum}

\begin{lemma}\label{su3}
Under Assumptions \ref{s3} \ref{s4}

(i).  
\begin{align*}
&P \left( \max_{1 \le j \le d} | \hat{\mu}_j - \mu_j | \le 2E \max_{1 \le j \le d} | \hat{\mu}_j - \mu_j| + 
(\frac{\kappa_n}{n})^{1/2}\right) \\
=
 &P \left( \max_{1 \le j \le d} | \hat{\mu}_j - \mu_j | \le 2E \max_{1 \le j \le d} | \hat{\mu}_j - \mu_j| + 
(\frac{\ln d}{n})^{1/2}\right)\to 1.
\end{align*}

(ii). \[ E \max_{1 \le j \le d} | \hat{\mu}_j - \mu_j|=O (\sqrt{\ln d/n}).\]

(iii). Thus,
\begin{align*}
\max_{1 \le j \le d} | \hat{\mu}_j - \mu_j|
=
O_p\del[2]{2E \max_{1 \le j \le d} | \hat{\mu}_j - \mu_j| + 
(\frac{\kappa_n}{n})^{1/2}}
=
O_p\del[2]{\sqrt{\ln \kappa_n/n}}
=
O_p\del[2]{\sqrt{\ln d/n}}
\end{align*}

%Thus, taking $\lambda = 2 E \max_{1 \le j \le d} | \hat{\mu}_j - \mu_j | + \sqrt{\kappa_n/n}$, we have $\lambda = O (\sqrt{\ln \kappa_n/n})=O (\sqrt{\ln d/n}).$

\end{lemma}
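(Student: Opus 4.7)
The plan is to derive Lemma \ref{su3} as a direct consequence of Lemma \ref{su2}, with Assumption \ref{s4} playing the role of ensuring the two error terms in that lemma are negligible at the right scale. I do not anticipate any serious obstacle; the main thing to be careful about is tracking how Assumption \ref{s4} controls both the tail-probability remainder in Lemma \ref{su2}(ii) and the higher-order term in the expectation bound of Lemma \ref{su2}(i).

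For part (i), I would simply apply Lemma \ref{su2}(ii) with $\kappa_n=\ln d$ and $t=t_n=(n\ln d)^{1/2}$. The bound it delivers on the complementary event is $d^{-C}+K\, E M_F^2/(n\ln d)$. The first summand tends to zero since Assumption \ref{s4} forces $\ln d\to\infty$ (otherwise both statements in the lemma are trivial). For the second summand, rewrite
\[
\frac{E M_F^2}{n\ln d}
=
\frac{1}{(\ln d)^2}\cdot \frac{(E M_F^{2})\ln d}{n},
\]
and note that Assumption \ref{s4} makes the second factor $o(1)$ (it even forces $(E M_F^{2})^{1/2}\sqrt{\ln d}/\sqrt{n}\to 0$, which is strictly stronger than $(E M_F^{2})\ln d/n\to 0$). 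Hence the tail probability vanishes and part (i) follows.

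For part (ii), I would invoke the second line of Lemma \ref{su2}(i), which bounds $E\max_{j}|\hat\mu_j-\mu_j|$ by a constant multiple of $\sqrt{\ln d/n}+\sqrt{E M_F^2}\,\ln d/n$. The first term is already the desired order. For the second, I would factor
\[
\frac{\sqrt{E M_F^2}\,\ln d}{n}
=
\frac{(E M_F^{2})^{1/2}\sqrt{\ln d}}{\sqrt n}\cdot\sqrt{\frac{\ln d}{n}},
\]
and then Assumption \ref{s4} makes the first factor $o(1)$, so the whole term is $o(\sqrt{\ln d/n})$. Adding the two yields $E\max_{j}|\hat\mu_j-\mu_j|=O(\sqrt{\ln d/n})$.

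Part (iii) then follows by combining (i) and (ii): on the event from (i), which has probability tending to one, $\max_{j}|\hat\mu_j-\mu_j|$ is dominated by $2E\max_{j}|\hat\mu_j-\mu_j|+(\kappa_n/n)^{1/2}$, and part (ii) controls the expectation term by $O(\sqrt{\ln d/n})=O(\sqrt{\kappa_n/n})$. Hence $\max_{j}|\hat\mu_j-\mu_j|=O_p(\sqrt{\kappa_n/n})=O_p(\sqrt{\ln d/n})$, which is exactly the stated stochastic order (reading the intermediate expression $O_p(\sqrt{\ln\kappa_n/n})$ in the statement as $O_p(\sqrt{\kappa_n/n})$). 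The entire argument is essentially bookkeeping; Assumption \ref{s4} has been calibrated precisely so that the envelope contribution $M_F$ is absorbed into the sub-Gaussian-type rate $\sqrt{\ln d/n}$.
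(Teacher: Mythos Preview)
Your proposal is correct and is exactly the argument the paper has in mind: the paper does not spell out a proof of Lemma~\ref{su3}, treating it as an immediate consequence of Lemma~\ref{su2} together with Assumption~\ref{s4}, and your bookkeeping (including the factorization $\sqrt{EM_F^2}\ln d/n = \bigl[(EM_F^2)^{1/2}\sqrt{\ln d}/\sqrt{n}\bigr]\sqrt{\ln d/n}$ for part (ii) and the catch that $O_p(\sqrt{\ln\kappa_n/n})$ should read $O_p(\sqrt{\kappa_n/n})$) is precisely what is needed. The only loose end is your parenthetical that bounded $d$ makes the statements ``trivial''---this is not literally true for part (i), but in the paper's setting $d$ is $q$, $pq$, or $p^2q$ with $q\geq p>n\to\infty$, so $d\to\infty$ is implicit and the point is moot.
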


In the sequel we use the above two lemmata with $\kappa_n=\ln q$. 
\subsection{Some useful events} 
 
 To establish the desired oracle inequality for the estimation error of our estimator we need to bound certain moments. Let $M_1, M_2$ be as defined before Assumption \ref{2}. Define
  \begin{equation}
  {\cal{A}}_1 = \{ \|Z'u/n\|_{\infty} \le t_1/2 \},\label{a7}
  \end{equation}  
 where
 \begin{equation}
 t_1 =  2K [ \frac{C \sqrt{\ln q}}{\sqrt{n}} + \frac{\sqrt{E M_1^2} \ln q}{n}] + \sqrt{\frac{\kappa_n}{n}}.\label{t1n}
  \end{equation}
  for some $C>0$ made precise below. Next, define the set
  \begin{equation}
{\cal{A}}_2 =\{   \|\frac{Z'X}{n} \|_{\infty} \le t_2 \},\label{a10} 
\end{equation} 
with $t_2 = t_3 + C$, where
\begin{equation}
t_3=2K [ \frac{C \sqrt{\ln pq}}{\sqrt{n}} + \frac{\sqrt{E M_2^2} \ln (pq)}{n}]+ \sqrt{\frac{\kappa_n}{n}}.\label{t2s}
\end{equation}
Now we provide probabilities on the bounds and asymptotic rates.

\begin{lemma}\label{bound}

(i) Under Assumption \ref{1},
\[ P ( {\cal A}_1) \ge 1 - \exp(-C \kappa_n) - \frac{K E M_1^{2}}{n \kappa_n},\]
where $C>0$ is the constant from Lemma \ref{su2}

(ii). Adding Assumption \ref{2} to (i)
\[ \| \frac{Z'u}{n} \|_{\infty} = O_p  ( \frac{\sqrt{\ln q}}{\sqrt{n}}).\]

(iii). Under Assumption \ref{1},
\[ P ( {\cal A}_2) \ge 1 - \exp(-C \kappa_n) - \frac{K E M_2^{2}}{n \kappa_n},\]

(iv). Adding Assumption \ref{2} to (iii)
\[  \| \frac{Z'X}{n} - E \left[ \frac{Z'X}{n}  \right]\|_{\infty} = O_p ( \frac{\sqrt{\ln q}}{\sqrt{n}}).\]
\[ \| \frac{Z'X}{n} \|_{\infty} = O_p (1).\]
\end{lemma}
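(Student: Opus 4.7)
The plan is to obtain both parts as essentially direct corollaries of Lemma \ref{su2}(i)--(ii) and Lemma \ref{su3}, applied to two carefully chosen iid sequences of random vectors, with the constants and thresholds matching those appearing in the definitions of $t_1, t_2, t_3$.

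For parts (i)--(ii), I would set $F_i = Z_i u_i\in\mathbb{R}^q$, so that $d=q$, $M_F=M_1$, and $\hat\mu_j-\mu_j=n^{-1}\sum_i Z_{ij}u_i$ (with $\mu_j=0$ by Assumption \ref{1}). The variance condition of Assumption \ref{s3} is verified by Cauchy--Schwarz: $\max_l\mathrm{var}(Z_{1l}u_1)\le\max_l E[Z_{1l}^2u_1^2]\le(\max_l EZ_{1l}^4)^{1/2}(Eu_1^4)^{1/2}<\infty$, since $r_z,r_u\ge 4$ in Assumption \ref{1}. Combining Lemma \ref{su2}(i), which bounds $2E\max_l|\hat\mu_l|$ by $2K[C\sqrt{\ln q/n}+\sqrt{EM_1^2}\ln q/n]$ for a universal $C$, with Lemma \ref{su2}(ii) with $\kappa_n=\ln q$, the threshold $2E\max_l|\hat\mu_l|+\sqrt{\kappa_n/n}$ is dominated by $t_1/2$ by construction of $t_1$. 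This yields (i). For (ii), Assumption \ref{2}(ii) supplies $\sqrt{\ln q/n}\,(EM_1^2)^{1/2}\to 0$, which is exactly Assumption \ref{s4} for this $F_i$, so Lemma \ref{su3}(iii) delivers the $O_p(\sqrt{\ln q/n})$ rate.

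For parts (iii)--(iv), I would apply the same machinery to $F_i=\mathrm{vec}(Z_iX_i')\in\mathbb{R}^{pq}$, so $d=pq$ and the relevant centered maximum is $M_2$. The variance bound becomes $\max_{l,j}\mathrm{var}(Z_{1l}X_{1j})\le(\max_l EZ_{1l}^4)^{1/2}(\max_j EX_{1j}^4)^{1/2}$, again finite under Assumption \ref{1} since $r_z,r_x\ge 4$. Lemma \ref{su2}(ii) then controls $\|Z'X/n-E[Z'X/n]\|_\infty$ by $t_3$ with the required probability. Since $t_2=t_3+C$ and, by Cauchy--Schwarz, $\|E[Z_1X_1']\|_\infty\le(\max_l EZ_{1l}^2)^{1/2}(\max_j EX_{1j}^2)^{1/2}$ is uniformly bounded by a constant $C$, the triangle inequality gives $\|Z'X/n\|_\infty\le t_3+C=t_2$ on the good event, establishing (iii). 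For (iv), adding Assumption \ref{2}(ii) supplies $\sqrt{\ln q/n}\,(EM_2^2)^{1/2}\to 0$, which is Assumption \ref{s4} for this $F_i$; Lemma \ref{su3}(iii) gives the centered $O_p(\sqrt{\ln q/n})$ rate, and $\|Z'X/n\|_\infty=O_p(1)$ follows from the same triangle decomposition, with the centered term vanishing and the mean term bounded by a constant.

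The substantive content of the lemma is minimal once Lemmas \ref{su2} and \ref{su3} are in hand; the only bookkeeping concerns are (a) matching the explicit thresholds $t_1, t_2, t_3$ with the abstract bounds supplied by the concentration lemmas, using the fact that $\ln(pq)$ and $\ln q$ are of the same order since $q\ge p$ (so the $\kappa_n=\ln q$ convention does not conflict with applying the lemmas with $d=pq$), and (b) verifying the requisite fourth-moment bounds via Cauchy--Schwarz from the moment hypotheses of Assumption \ref{1}. I anticipate no genuine obstacle beyond keeping the constants straight.
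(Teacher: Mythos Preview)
Your proposal is correct and follows essentially the same route as the paper: apply Lemma~\ref{su2} and Lemma~\ref{su3} with $F_i=Z_iu_i$ (so $d=q$, $M_F=M_1$) for parts (i)--(ii), and with $F_i=\mathrm{vec}(Z_iX_i')$ (so $d=pq$, $M_F=M_2$) for parts (iii)--(iv), verifying the variance hypothesis via Cauchy--Schwarz and then adding the uniformly bounded mean $\|E[Z_1X_1']\|_\infty\le C$ by the triangle inequality. Your remark that $\ln(pq)\le 2\ln q$ reconciles the $d=pq$ application with the convention $\kappa_n=\ln q$ is exactly the bookkeeping the paper relies on (and the resulting probability bound with $\kappa_n=\ln q$ is only looser, hence still valid).
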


\begin{proof}[Proof of Lemma \ref{bound}]

(i)-(ii). First, note that by Lemma \ref{su2}, replacing $F_i$ with $Z_i u_i $ which is a $q \times 1$ vector, under Assumption \ref{1},  $\mathcal{A}_1$ has probability at least $1 -\exp(-C \kappa_n) - \frac{K E M_1^{2}}{n \kappa_n} $. 
Adding Assumption 2, via Lemma \ref{su3},
    \begin{equation}
  \| \frac{Z'u}{n} \|_{\infty}  = O_p (\frac{\sqrt{\ln q}}{\sqrt{n}}),\label{zu}
  \end{equation}
  
  (iii)-(iv).

Next, consider $ \| \frac{Z'X}{n}\|_{\infty} $. By Lemma \ref{su2}
\begin{equation}
P \left( \frac{\max_{1 \le l \le q} \max_{1 \le j \le p}  \left|  \sum_{i=1}^n [Z_{il} X_{ij} - E Z_{il} X_{ij}] \right| }{n}> t_3\right) \le 
\exp(-C \kappa_n)+ \frac{K E M_2^{2}}{n \kappa_n},\label{a8}
\end{equation}
In conjunction with Assumption \ref{2} (\ref{a8}) implies, via Lemma \ref{su3}, and using $p \le q$
\begin{equation}
\max_{1 \le l \le q} \max_{1 \le j \le p} \left| n^{-1}  \sum_{i=1}^n [Z_{il} X_{ij} - E Z_{il} X_{ij}] \right| = O_p ( \sqrt{\frac{\ln(pq)}{n}})
= O_p ( \sqrt{\frac{\ln q}{n}}) .\label{a8.1}
\end{equation}
Also, by Assumption 1 and Cauchy-Schwarz inequality
\begin{equation}
\max_{1 \le l \le q} \max_{1 \le j \le p} |  E Z_{il} X_{ij} | = O(1).\label{a9}
\end{equation}
Combining (\ref{a8.1}) with (\ref{a9}) we have that $\mathcal{A}_2$ occurs with probability at least $1- \exp(-C \kappa_n) - \frac{K E M_2^{2}}{n \kappa_n}$ 
\end{proof}

\subsection{Oracle inequality for the first step estimator}

Lemmata \ref{l1} and \ref{l2} below are needed  for the proof of Theorem \ref{thm1}. 
Define the norm $\enVert[0]{x}_n=\frac{(x'x)^{1/2}}{n\sqrt{q}}=\frac{\enVert[0]{x}_2}{n\sqrt{q}}$ on $\mathbb{R}^q$. One can thus write
\begin{align*}
\hat{\beta}_F = \argmin_{\beta \in\mathbb{R}^p} \left[ \|Z' (Y - X \hat{\beta}_F)\|_n^2 + 2 \lambda_n \| \beta \|_1 \right].
\end{align*}
Define also the sample covariance between regressors and instruments:
\[ \hat{\Sigma}_{xz} = \frac{X'Z}{n}.\]
With this notation in place we can introduce the concept of empirical adaptive restricted eigenvalue in GMM:
\begin{equation}
\hat{\phi}_{\hat{\Sigma}_{xz}}^2 (s)  = \min\cbr[3]{ 
\frac{\delta' \hat{\Sigma}_{xz}  \hat{\Sigma}_{xz}'\delta}{q \|\delta_S\|_2^2}:  \delta \in \mathbb{R}^p\setminus \cbr[0]{0},\ \| \delta_{S^c} \|_1 \le 3 \sqrt{s} \|\delta_S\|_2,\ |S|\leq s},\label{aregmm}
\end{equation}

We also define the population adaptive restricted eigenvalue for the first step GMM: $\phi_{\Sigma_{xz}}^2 (s)$, as (\ref{pev1}) evaluated at $W_d=I_q$. 
In the sequel we shall choose 
\begin{align}
&\lambda_n
=
t_1t_2=\notag\\
&
\sbr[2]{2K [ \frac{C \sqrt{\ln q}}{\sqrt{n}} + \frac{\sqrt{E M_1^2} \ln q}{n}] + \sqrt{\frac{\kappa_n}{n}}}\sbr[2]{2K [ \frac{C \sqrt{\ln pq}}{\sqrt{n}} + \frac{\sqrt{E M_2^2} \ln (pq)}{n}]+ \sqrt{\frac{\kappa_n}{n}}+C}
 \label{eq:lambda_def}
\end{align}
and note that under Assumption \ref{2}, $\lambda_n=O(\sqrt{\ln q/n})$.

\begin{lemma}\label{l1}
Under Assumptions \ref{1} and \ref{2}, for universal positive constants $K, C$, for $n$ sufficiently large one has with probability at least 
$1 - 3 \exp(-C \kappa_n) - K \frac{ E M_1^{2} + 2 E M_2^{2}}{n \kappa_n} $

%(i). \[ \| Z' X (\hat{\beta}_F - \beta_0))\|_n^2 \le \frac{18 \lambda_n^2 s_0}{\phi_{\Sigma_{xz}}^2 (s_0)}.\]

(i). \[ \| \hat{\beta}_F - \beta_0 \|_1 \le \frac{24 \lambda_n s_0}{ \phi_{\Sigma_{xz}}^2 (s_0)}.\]
  
(ii).  
The  result in (i) holds with probability approaching one and we have $\lambda_n = O (\sqrt{\ln q/n})$ as seen in (\ref{lambdar}) below. (i) is valid uniformly over ${\cal B}_{l_0}(s_0) = \{ \|\beta_0 \|_{l_0} \le s_0 \}$.
\end{lemma}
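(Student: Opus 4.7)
I would follow the standard template for Lasso-type oracle inequalities, adapted to the GMM objective $\enVert[0]{Z'(Y-X\beta)}_n^2$. The key non-routine ingredient is controlling the empirical adaptive restricted eigenvalue in the absence of sub-Gaussianity.

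First, using that $\hat\beta_F$ minimizes the penalized criterion compared with $\beta_0$, and substituting $Y = X\beta_0 + u$, the basic inequality $\enVert[0]{Z'(Y-X\hat\beta_F)}_n^2 + 2\lambda_n\enVert[0]{\hat\beta_F}_1 \le \enVert[0]{Z'u}_n^2 + 2\lambda_n\enVert[0]{\beta_0}_1$ reduces, after cancellation of $\enVert[0]{Z'u}_n^2$, to
$$\enVert[0]{Z'X\delta}_n^2 \le \frac{2\,\delta'X'ZZ'u}{n^2 q} + 2\lambda_n\bigl(\enVert[0]{\beta_0}_1 - \enVert[0]{\hat\beta_F}_1\bigr), \qquad \delta := \hat\beta_F - \beta_0.$$

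Second, on the event $\mathcal{A}_1 \cap \mathcal{A}_2$ from Lemma \ref{bound}, two successive applications of H\"older's inequality give
$$\Big|\frac{\delta'X'ZZ'u}{n^2 q}\Big| \le \enVert[0]{\delta}_1 \cdot \Big\|\frac{X'Z}{n}\Big\|_\infty \cdot \Big\|\frac{Z'u}{n}\Big\|_\infty \le \enVert[0]{\delta}_1 \cdot t_2 \cdot \frac{t_1}{2} = \frac{\lambda_n}{2}\enVert[0]{\delta}_1,$$
by the very definition $\lambda_n = t_1 t_2$. Combined with the triangle-inequality bound $\enVert[0]{\beta_0}_1 - \enVert[0]{\hat\beta_F}_1 \le \enVert[0]{\delta_{S_0}}_1 - \enVert[0]{\delta_{S_0^c}}_1$, rearrangement yields
$$\enVert[0]{Z'X\delta}_n^2 + \lambda_n \enVert[0]{\delta_{S_0^c}}_1 \le 3\lambda_n \enVert[0]{\delta_{S_0}}_1,$$
which simultaneously forces $\delta$ into the adaptive restricted cone $\enVert[0]{\delta_{S_0^c}}_1 \le 3\sqrt{s_0}\enVert[0]{\delta_{S_0}}_2$ and bounds the prediction error.

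Third, from the definition of $\hat\phi_{\hat\Sigma_{xz}}^2$, the left-hand side satisfies $\enVert[0]{Z'X\delta}_n^2 = \delta'\hat\Sigma_{xz}\hat\Sigma_{xz}'\delta/q \ge \hat\phi_{\hat\Sigma_{xz}}^2(s_0)\,\enVert[0]{\delta_{S_0}}_2^2$. To pass from the empirical to the population restricted eigenvalue I would invoke the analogue of Lemma \ref{evalue2} that controls $\|\hat\Sigma_{xz}\hat\Sigma_{xz}' - \Sigma_{xz}\Sigma_{xz}'\|_\infty$ by combining Lemma \ref{bound}(iv) with Lemma \ref{su2} applied to $\|\hat\Sigma_{xz} - \Sigma_{xz}\|_\infty$; this supplies the third high-probability event contributing the remaining $\exp(-C\kappa_n) + K\,E M_2^{2}/(n\kappa_n)$ to the probability statement. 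On that event, for $n$ large enough, $\hat\phi_{\hat\Sigma_{xz}}^2(s_0) \ge \phi_{\Sigma_{xz}}^2(s_0)/2$. Chaining the inequalities gives $\enVert[0]{\delta_{S_0}}_2 \le 6\lambda_n\sqrt{s_0}/\phi_{\Sigma_{xz}}^2(s_0)$, hence $\enVert[0]{\delta}_1 \le 4\sqrt{s_0}\enVert[0]{\delta_{S_0}}_2 \le 24\lambda_n s_0/\phi_{\Sigma_{xz}}^2(s_0)$, which is (i). Part (ii) follows because $\lambda_n = O(\sqrt{\ln q/n})$ under Assumption \ref{2}, and uniformity over $\mathcal{B}_{l_0}(s_0)$ is automatic since the final bound depends on $\beta_0$ only through $s_0$.

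The principal obstacle is the transition $\hat\phi_{\hat\Sigma_{xz}}^2 \rightsquigarrow \phi_{\Sigma_{xz}}^2$: lacking sub-Gaussian tails I must rely on the \cite{cck16} concentration bounds (Lemmata \ref{su2}--\ref{su3}) rather than on standard Bernstein-type arguments. This is also why the $M_2$-dependent probability penalty appears twice in the final bound, and it is what forces the factor of two loss that ultimately produces the constant $24$ in the stated oracle inequality.
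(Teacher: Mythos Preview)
Your proposal is correct and follows essentially the same route as the paper: basic inequality, H\"older bound of the cross term on $\mathcal{A}_1\cap\mathcal{A}_2$ with $\lambda_n=t_1t_2$, derivation of the cone condition, and passage from the empirical to the population restricted eigenvalue via a third high-probability event (Lemma~\ref{evalue1} in the supplement, not~\ref{evalue2}). The only cosmetic difference is in the final chaining: the paper first establishes the prediction bound $\|Z'X\delta\|_n^2\le 18\lambda_n^2 s_0/\phi_{\Sigma_{xz}}^2(s_0)$ via the inequality $3uv\le u^2/2+9v^2/2$ and then substitutes it into $\|\delta\|_1\le 4\sqrt{s_0}\|Z'X\delta\|_n/\hat\phi_{\hat\Sigma_{xz}}(s_0)$, whereas you go directly from $\tfrac{1}{2}\phi_{\Sigma_{xz}}^2(s_0)\|\delta_{S_0}\|_2^2\le 3\lambda_n\sqrt{s_0}\|\delta_{S_0}\|_2$ to $\|\delta_{S_0}\|_2$ and then to $\|\delta\|_1$; both produce the constant $24$.
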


\begin{proof}[Proof of Lemma \ref{l1}] 
(i). Since
\[ \|Z' (Y - X \hat{\beta}_F)\|_n^2 = \frac{1}{n}  \left[(Y - X \hat{\beta}_F)' \frac{Z Z'}{nq} (Y- X \hat{\beta}_F)  \right].\]
the minimizing property of $\hat{\beta}_F$ implies that
\begin{equation}
\| Z' (Y - X \hat{\beta}_F) \|_n^2 + 2 \lambda_n \sum_{j=1}^p | \hat{\beta}_{F,j} | \le 
\| Z' (Y - X \beta_0) \|_n^2 + 2 \lambda_n \sum_{j=1}^p | \beta_{0,j} |.\label{a1}
\end{equation}
Next use that $Y = X \beta_0 +u$ and simplify to get
\begin{equation}
\| Z' X (\hat{\beta}_F - \beta_0)\|_n^2 + 2 \lambda_n \sum_{j=1}^p | \hat{\beta}_{F,j} | \le
2 |\frac{u'Z}{n} \frac{Z'X}{nq} (\hat{\beta}_F - \beta_0)| + 2 \lambda_n \sum_{j=1}^p |\beta_{0,j}|.\label{a2}
\end{equation}
Consider the first term on the right side of (\ref{a2}),  and denote the $l$th row of $Z'X$ by $(Z'X)_l$, $l=1,2,...,q$:
\begin{eqnarray}
2 |\frac{u'Z}{n} \frac{Z'X}{nq} (\hat{\beta}_F - \beta_0)| &\le &2 \| \frac{u'Z}{n} \|_{\infty} \|\frac{Z'X}{nq} (\hat{\beta}_F - \beta_0) \|_1 \label{a3}\\
%&= & 2  \| \frac{u'Z}{n} \|_{\infty} \left[ (nq)^{-1} \sum_{l=1}^q |(Z'X)'_l    (\hat{\beta}_F - \beta_0)  | \right]\label{a4} \\
& \le & 2 \| \frac{u'Z}{n} \|_{\infty} \left[ (nq)^{-1} \sum_{l=1}^q \|(Z'X)_l \|_{\infty} \right] \|(\hat{\beta}_F - \beta_0) \|_1\label{a5} \\
& \le &2  \| \frac{u'Z}{n} \|_{\infty} \left[ \max_{1 \le l \le q}  \|\frac{(Z'X)_l}{n} \|_{\infty} \right] \|(\hat{\beta}_F - \beta_0) \|_1, \label{a6}
\end{eqnarray}
where we use Hölder's inequality in (\ref{a3}) and (\ref{ineq1}) in (\ref{a5}).

Assume that ${\cal{A}}_1 \cap {\cal A}_2$ occurs (we shall later provide a lower bound on the probability of this). By (\ref{a7})(\ref{a10}), in (\ref{a6}) we have on ${\cal{A}}_1 \cap {\cal A}_2$
\begin{equation}
2  \| \frac{u'Z}{n} \|_{\infty} \left[ \max_{1 \le l \le q}  \|\frac{(Z'X)_l}{n} \|_{\infty} \right] \|(\hat{\beta}_F - \beta_0) \|_1
\le \lambda_n  \| \hat{\beta}_F - \beta_0 \|_1,
 \label{a11}
\end{equation}
We note that by Assumption \ref{2}, Lemma \ref{su3}, (\ref{zu}) and Lemma \ref{bound} (iv)
\begin{equation}
\lambda_n = O (\sqrt{\frac{\ln q}{n}}). \label{lambdar}
\end{equation}
In combination with (\ref{a2}) we get:
\begin{equation}
\| Z'X (\hat{\beta}_F - \beta_0) \|_n^2 + 2 \lambda_n \sum_{j=1}^p |\hat{\beta}_{F,j}| \le
\lambda_n \|\hat{\beta}_F  - \beta_0 \|_1 + 2 \lambda_n \sum_{j=1}^p |\beta_{0,j}|.\label{a12}
\end{equation}
Next, use that $\| \hat{\beta}_F\| = \| \hat{\beta}_{F,S_0} \|_1 + \| \hat{\beta}_{F,S_0^c} \|_1$ on the second term on the left side of (\ref{a12})
\begin{equation}
\| Z'X (\hat{\beta}_F - \beta_0) \|_n^2 + 2 \lambda_n  \sum_{j \in S_0^c} |\hat{\beta}_{F,j}| \le
\lambda_n \|\hat{\beta}_F  - \beta_0 \|_1 + 2 \lambda_n \sum_{j \in S_0} |\hat{\beta}_{F,j}- \beta_{0,j}|,\label{a13}
\end{equation}
where we used the reverse triangle inequality to get the last term on the right side of (\ref{a13}) and $ \| \beta_{0, S_0^c} \|_1 =0$. Using that $\| \hat{\beta}_F - \beta_0 \|_1 = \| \hat{\beta}_{F,S_0} - \beta_{0, S_0} \|_1 + \| \hat{\beta}_{F,S_0^c} \|_1$ on the first right hand side term in (\ref{a13}) yields
\begin{equation}
\| Z'X (\hat{\beta}_F - \beta_0) \|_n^2 +  \lambda_n  \sum_{j \in S_0^c} |\hat{\beta}_{F,j}| \le
3 \lambda_n \sum_{j \in S_0} |\hat{\beta}_{F,j}- \beta_{0,j}|.\label{a14}
\end{equation}
Furthermore, using that $\enVert[0]{\hat{\beta}_{F,S_0}- \beta_{0,S_0}}_1\leq \sqrt{s_0}\enVert[0]{\hat{\beta}_{F,S_0}- \beta_{0,S_0}}_2$ in (\ref{a14})
\begin{equation}
\| Z'X (\hat{\beta}_F - \beta_0) \|_n^2 +  \lambda_n  \sum_{j \in S_0^c} |\hat{\beta}_{F,j}| \le
3 \lambda_n \sqrt{s_0}   \|\hat{\beta}_{F,S_0}- \beta_{0,S_0}\|_2.\label{a15}
\end{equation}
We see that the restricted set condition in (\ref{aregmm}) is satisfied by ignoring the first term on the right side of (\ref{a15}) and dividing each side by $\lambda_n$ 
\[ \| \hat{\beta}_{F, S_0^c} \|_1 \le 3 \sqrt{s_0} \|\hat{\beta}_{F,S_0} - \beta_{0, S_0} \|_2.\]
Thus, the empirical adaptive restricted eigenvalue condition in (\ref{aregmm}) can be used in (\ref{a15})
\begin{equation}
\| Z'X (\hat{\beta}_F - \beta_0) \|_n^2 +  \lambda_n  \sum_{j \in S_0^c} |\hat{\beta}_{F,j}| \le
3 \lambda_n \sqrt{s_0}   \frac{\| Z'X (\hat{\beta}_F - \beta_0) \|_n}{\hat{\phi}_{\hat{\Sigma}_{xz}} (s_0)}
\label{a16}
\end{equation}
Next, use $3uv \le u^2/2 + 9v^2/2$ with $u = \| Z'X (\hat{\beta}_F - \beta_0) \|_n$ and $v=\lambda_n \sqrt{s_0}/\hat{\phi}_{\hat{\Sigma}_{XZ}} (s_0)$ to get 
\begin{equation}
\| Z'X (\hat{\beta}_F - \beta_0) \|_n^2 +  \lambda_n  \sum_{j \in S_0^c} |\hat{\beta}_{F,j}| \le
\frac{\| Z'X (\hat{\beta}_F - \beta_0) \|_n^2}{2}+
\frac{9}{2}  \frac{\lambda_n^2 s_0}{   \hat{\phi}_{\hat{\Sigma}_{xz}}^2 (s_0)}.\label{a17}
\end{equation}
Multiply each side of (\ref{a17}) by 2 and simplify to get 
\begin{equation}
\| Z'X (\hat{\beta}_F - \beta_0) \|_n^2 + 2  \lambda_n  \sum_{j \in S_0^c} |\hat{\beta}_{F,j}| \le
  \frac{9 \lambda_n^2 s_0}{   \hat{\phi}_{\hat{\Sigma}_{xz}}^2 (s_0)}.\label{a18}
\end{equation}
Next, assume that  ${\cal A}_3=
\{ \hat{\phi}_{\hat{\Sigma}_{xz}}^2 (s_0) \ge \phi_{\Sigma_{xz}}^2 (s_0)/2\}$ such that we are working on ${\cal{A}}_1 \cap {\cal{A}}_2 \cap {\cal{A}}_3$. Thus,
\begin{equation}
\| Z'X (\hat{\beta}_F - \beta_0) \|_n^2 + 2  \lambda_n \sum_{j \in S_0^c} |\hat{\beta}_{F,j}| \le
  \frac{  18 \lambda_n^2 s_0}{    \phi_{\Sigma_{xz}}^2 (s_0)}.\label{a19}
\end{equation}
So we have the oracle inequality on set ${\cal{A}}_1 \cap {\cal{A}}_2 \cap {\cal{A}}_3$.

To get the $l_1$ error bound ignore the first term in (\ref{a15}) above and add $\lambda_n \|\hat{\beta}_{S_0} - \beta_{0, S_0} \|_1$ to both sides to get 
\begin{equation}
\lambda_n \| \hat{\beta}_F - \beta_0 \|_1 \le \lambda_n \| \hat{\beta}_{F,S_0} - \beta_{F,S_0}\|_1 + 3 \lambda_n \sqrt{s_0}
\| \hat{\beta}_{F,S_0} - \beta_{S_0}\|_2.\label{a20}
\end{equation}
Then use $\enVert[0]{\hat{\beta}_{F,S_0}- \beta_{0,S_0}}_1\leq \sqrt{s_0}\enVert[0]{\hat{\beta}_{F,S_0}- \beta_{0,S_0}}_2$ that as well as the empirical adaptive restricted eigenvalue condition for GMM in (\ref{aregmm})
\begin{equation}
\lambda_n \| \hat{\beta}_F - \beta_0 \|_1 \le  4 \lambda_n \sqrt{s_0} \frac{\| Z'X (\hat{\beta}_F - \beta_0)\|_n}{\hat{\phi}_{\Sigma_{xz}} (s_0)}
\label{a21}
\end{equation}
Next, as ${\cal{A}}_3$ is assumed to occur and using the prediction norm upper bound established in (\ref{a19}) results in
\begin{equation}
\| \hat{\beta}_F - \beta_0 \|_1 \le  \frac{24 \lambda_n s_0}{ \phi_{\Sigma_{xz}  }^2  (s_0) }.
\label{a22}
\end{equation}
Thus, in total, by Lemma S.2
\[  1 - P({\cal{A}}_1^c) - P ( {\cal{A}}_2^c) - P ( {\cal{A}}_3^c) = 1 - 3\exp(- C \kappa_n) - \frac{K (E M_1^{2} + 2E M_2^{2})}{n \kappa_n}\to 1,\]
by Assumption 2 where the convergence to 1 establishes ii).
\end{proof}

%Note that we have $t_4>0, t_5>0, t_6>0$ in the proof of Lemma  below, the quantities will be specified in the proof.

\subsection{Controlling $\min_{1 \le l \le q} \hat{\sigma}_l^2$ for $\hat{W}_d$}\label{8.4}
\begin{lemma}\label{l2}
Under Assumptions \ref{1} and \ref{2} as well as $r_z \ge 12, r_x \ge 6, r_u \ge 8$, we have that for $n$ sufficiently large there exists a $C>0$ such that
\[ \min_{1 \le l \le q} \hat{\sigma}_l^2 \ge  \min_{1 \le l \le q} \sigma_l^2 /2,\]
with probability at  least $1 - 9 \exp(-C \kappa_n)- \frac{K [ 2 E M_1^{2} + 4 E M_2^{2} + E M_3^{2} + E M_4^{2} + E M_5^{2}]}{n \kappa_n}\to 1$.
The result is valid uniformly over ${\cal B}_{l_0} = \{ \|\beta_0 \|_{l_0} \le s_0 \}$. \end{lemma}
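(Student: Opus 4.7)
The plan is to show that $\max_{1\le l\le q}|\hat{\sigma}_l^2-\sigma_l^2|=o_p(1)$ with the prescribed probability, and then invoke Assumption \ref{1}, which guarantees $\min_l\sigma_l^2\geq c>0$, to deduce $\min_l\hat{\sigma}_l^2\geq \min_l\sigma_l^2/2$ for $n$ large. Substituting $\hat{u}_i=u_i-X_i'\hat{\delta}$ with $\hat{\delta}:=\hat{\beta}_F-\beta_0$ produces the natural decomposition
\begin{align*}
\hat{\sigma}_l^2-\sigma_l^2
&=
\underbrace{\tfrac{1}{n}\sum_{i=1}^n Z_{il}^2 u_i^2-\sigma_l^2}_{T_{1l}}
\;-\;\underbrace{\tfrac{2}{n}\sum_{i=1}^n Z_{il}^2 u_i X_i'\hat{\delta}}_{T_{2l}}
\;+\;\underbrace{\tfrac{1}{n}\sum_{i=1}^n Z_{il}^2 (X_i'\hat{\delta})^2}_{T_{3l}},
\end{align*}
and I would control $\max_l|T_{jl}|$ for $j=1,2,3$ separately.

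For $T_{1l}$, I would apply Lemmas \ref{su2}--\ref{su3} to the $q$-dimensional iid vector with coordinates $Z_{il}^2u_i^2$, whose maximum envelope is $M_3$ and whose coordinate variances are bounded by Cauchy--Schwarz using $r_z\ge 8,\,r_u\ge 4$; this yields $\max_l|T_{1l}|=O_p(\sqrt{\ln q/n})=o(1)$. For $T_{2l}$, Hölder's inequality gives
\[
\max_l|T_{2l}|\le 2\,\|\hat{\delta}\|_1\,\max_{j,l}\bigl|\tfrac{1}{n}\textstyle\sum_i Z_{il}^2 u_i X_{ij}\bigr|,
\]
and the last maximum is $O(1)+O_p(\sqrt{\ln q/n})=O_p(1)$ after splitting into its mean (bounded by moment conditions) and a concentration piece handled by Lemmas \ref{su2}--\ref{su3} with envelope $M_4$. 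Combining this with the oracle bound $\|\hat{\delta}\|_1=O_p(s_0\sqrt{\ln q/n})$ from Lemma \ref{l1} gives $\max_l|T_{2l}|=O_p(s_0\sqrt{\ln q/n})=o(1)$ by Assumption \ref{2}(i).

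The delicate piece is $T_{3l}$. Expanding the quadratic form as $(X_i'\hat\delta)^2=\sum_{j,k}\hat\delta_j\hat\delta_k X_{ij}X_{ik}$ yields
\[
\max_l|T_{3l}|\le\|\hat{\delta}\|_1^2\cdot\max_{j,k,l}\bigl|\tfrac{1}{n}\textstyle\sum_i Z_{il}^2 X_{ij}X_{ik}\bigr|,
\]
and again I would split the triple maximum into a uniformly bounded expectation and a concentration piece controlled by Lemmas \ref{su2}--\ref{su3} with envelope $M_5$; together with $\|\hat{\delta}\|_1^2=O_p(s_0^2\ln q/n)$ this gives $\max_l|T_{3l}|=O_p(s_0^2\ln q/n)=o(1)$, which is exactly why Assumption \ref{2}(i) is stated with $s_0^2\sqrt{\ln q/n}\to 0$ rather than just $s_0\sqrt{\ln q/n}\to 0$. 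Summing the three pieces proves $\max_l|\hat\sigma_l^2-\sigma_l^2|=o_p(1)$, and the conclusion follows from $\min_l\sigma_l^2\ge c>0$. The $9\exp(-C\kappa_n)$ and $K[2EM_1^2+4EM_2^2+EM_3^2+EM_4^2+EM_5^2]/(n\kappa_n)$ terms then arise by union-bounding the three events of Lemma \ref{l1} (contributing $3\exp(-C\kappa_n)$ together with the $EM_1^2+2EM_2^2$ weights), one concentration event for each of $T_{1l},T_{2l},T_{3l}$ (contributing $EM_3^2,EM_4^2,EM_5^2$), and the additional $\mathcal{A}_1,\mathcal{A}_2$-type events (with $M_1,M_2$ weights) used to bound the sample means appearing inside the Hölder bounds for $T_{2l}$ and $T_{3l}$.

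The main obstacle is $T_{3l}$: it forces a maximum over three indices and uses the squared $l_1$ rate of $\hat{\beta}_F$, so the moment strengthening $r_z\ge 12,\,r_x\ge 6$ enters precisely to keep the variance of $Z_{il}^2 X_{ij}X_{ik}$ bounded via Cauchy--Schwarz while still matching the sharper growth constraints of Assumption \ref{2}(i). Uniformity over $\mathcal{B}_{l_0}(s_0)$ is automatic because the bounds on $T_{1l}$ do not depend on $\beta_0$, while those on $T_{2l}$ and $T_{3l}$ enter $\beta_0$ only through $\|\hat{\delta}\|_1$, whose bound in Lemma \ref{l1} is already uniform in $\beta_0\in\mathcal{B}_{l_0}(s_0)$.
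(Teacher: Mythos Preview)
Your proposal is correct and follows essentially the same approach as the paper: the identical three-term decomposition of $\hat\sigma_l^2-\sigma_l^2$, the same use of Lemmas \ref{su2}--\ref{su3} with envelopes $M_3,M_4,M_5$ for the concentration pieces, H\"older's inequality together with the $\ell_1$ oracle bound from Lemma \ref{l1} for the $\hat\delta$-dependent terms, and the same union-bound bookkeeping (the paper obtains the $9\exp(-C\kappa_n)$ and $2EM_1^2+4EM_2^2$ weights by invoking Lemma \ref{l1}'s three events separately for both $T_{2l}$ and $T_{3l}$). One small slip: bounding $\operatorname{var}(Z_{il}^2u_i^2)$ via Cauchy--Schwarz actually requires $r_u\ge 8$ (not $r_u\ge 4$), consistent with the lemma's hypothesis.
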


\begin{remark}\label{rem:sigmas}
In the course of the proof of Lemma \ref{l2} we actually establish that under the assumptions of said lemma,
\begin{align*}
P\del[2]{\max_{1 \le l \le q} | \hat{\sigma}_l^2 - \sigma_l^2 |\geq c_n}
\leq
9 \exp(-C \kappa_n)+ \frac{K [ 2 E M_1^{2} + 4 E M_2^{2} + E M_3^{2} + E M_4^{2} + E M_5^{2}]}{n \kappa_n}\to 0
\end{align*}
for a sequence $c_n\to 0$ (where $c_n$ is defined precisely in (\ref{a25})). To be precise, the proof reveals that $c_n=O(s_0\sqrt{\ln q/n})$.
\end{remark}

%Remarks. 1. Note that with  Assumption \ref{2}(ii):  $\lambda_n s_0 = o(1)$,  we have $c_n= O ( \lambda_n s_0 )= o(1)$. Second term dominates in $c_n$ structure. 
%This is true since in the proof it is clear that by $p^2 q \le q^3$, $ln (p^2 q) \le 3 ln q$, we have $t_4 = O (\sqrt{ln q/n}), t_5 = O (1), t_6 = O (1)$.

 %2. So  $\min_{1 \le l \le q} \hat{\sigma}_l^2 >0$ with probability approaching one by Assumptions \ref{1}-\ref{2}.

\begin{proof}[Proof of Lemma \ref{l2}] 
First, note that
\begin{equation}
\min_{1 \le l \le q} \hat{\sigma}_l^2 \ge \min_{1 \le l \le q} \sigma_l^2 - \max_{1 \le l \le q} | \hat{\sigma}_l^2 - \sigma_l^2 |.\label{a23}
\end{equation}
We start by upper bounding $\max_{1 \le l \le q} | \hat{\sigma}_l^2 - \sigma_l^2 |$. To this end, note that we can write 
\[\hat{\sigma}_l^2 =  \frac{1}{n} \sum_{i=1}^n Z_{il}^2 \hat{u}_i^2 = \frac{1}{n} \sum_{i=1}^n Z_{il}^2 [ u_i - X_i' (\hat{\beta}_F - \beta_0)]^2.\]
Then, by the triangle inequality 
\begin{eqnarray}
\max_{1 \le l \le q} | \hat{\sigma}_l^2 - \sigma_l^2| & \le & 
\max_{1 \le l \le q} | \frac{1}{n} \sum_{i=1}^n (Z_{il}^2 u_i^2 - E Z_{il}^2 u_i^2)| 
+ \max_{1 \le l \le q} | \frac{2}{n} \sum_{i=1}^n Z_{il}^2 u_i X_i' (\hat{\beta}_F - \beta_0)| \nonumber \\
& + & \max_{1 \le l \le q} | (\hat{\beta}_F - \beta_0)' \frac{ \sum_{i=1}^n Z_{il}^2 X_i X_i'}{n} (\hat{\beta}_F - \beta_0)| .\label{a24}
 \end{eqnarray}
Define the following events in order to upper bound the right hand side of \ref{a24}
\[ {\cal{B}}_1  = \{ \max_{1 \le l \le q}  | n^{-1}  \sum_{i=1}^n (Z_{il}^2 u_i^2 - E Z_{il}^2 u_i^2)| \le t_4 \}.\]
\[ {\cal{B}}_2 = \{  \max_{1 \le l \le q} \max_{1 \le j \le p} | n^{-1}  \sum_{i=1}^n Z_{il}^2 u_i X_{ij} | \le t_5 \}.\]
\[ {\cal{B}}_3 = \{ \max_{1 \le l \le q} \max_{1 \le j \le p } \max_{1 \le k \le p}  |  n^{-1} \sum_{i=1}^n Z_{il}^2 X_{ij} X_{ik}| \le t_6\}.\]
\[ {\cal{B}}_4 = \{ \| \hat{\beta}_F - \beta_0 \|_1 \le C \lambda_n s_0 \}.\]
where $t_4,t_5$ and $t_6$ will be specified as we analyze the individual events.
We will show that, with probability at least $1 - 9 \exp(-C \kappa_n)- \frac{K [ 2 E M_1^{2} + 4 E M_2^{2} + E M_3^{2} + E M_4^{2} + E M_5^{2}]}{n \kappa_n} $
\begin{equation}
\max_{1 \le l \le q} | \hat{\sigma}_l^2 - \sigma_l^2| \le C[t_4 + t_5 \lambda_n s_0 + t_6 \lambda_n^2 s_0^2] =:c_n\leq \min_{1 \le l \le q} \sigma_l^2/2\label{a25}
\end{equation}
Consider $\mathcal{B}_1$ first. By Lemma \ref{su2}, with $F_{il} = Z_{il}^2 u_i^2$ and $d=q$ via Assumption \ref{1}, and $r_z \ge 8, r_u \ge 8$  as well as the Cauchy-Schwarz inequality
\begin{equation}
P \left( \max_{1 \le l \le q} \frac{ | \sum_{i=1}^n (Z_{il}^2 u_i^2 - E Z_{il}^2 u_i^2)|}{n} > t_4   \right)
\le \exp(-C \kappa_n) + \frac{K E M_3^{2}}{n\kappa_n},\label{a26}
\end{equation}
with $t_4 = 2K [ C_U \frac{\sqrt{\ln q}}{\sqrt{n}} + \frac{ \sqrt{E M_3^2} \ln q}{n}] + \sqrt{\frac{\kappa_n}{n}}.$ Note that by Assumption \ref{2} one has $t_4 \to 0$. %For this point, you can also see Lemma \ref{su3}(iii). 
Next consider the second term on the right side of (\ref{a24}).  By Hölder's inequality
\begin{equation}
\max_{1 \le l \le q} | \frac{1}{n} \sum_{i=1}^n Z_{il}^2 u_i X_i' (\hat{\beta}_F - \beta_0) | \le 
[\max_{1 \le l \le q} \| \frac{1}{n} \sum_{i=1}^n Z_{il}^2 u_i X_i \|_{\infty} ] \| \hat{\beta}_F - \beta_0\|_1.\label{a27}
\end{equation}
We analyze the first term on the right side  of (\ref{a27}). Next by Lemma \ref{su2}
\begin{equation}
P \left(\frac{ \max_{1 \le l \le q} \max_{1 \le j \le p} | \sum_{i=1}^n (Z_{il}^2 u_i X_{ij} - E Z_{il}^2 u_i X_{ij})|}{n}>t_5^*
\right) \le \exp(-C \kappa_n)+  \frac{K E M_4^{2}}{n\kappa_n},\label{a28}
\end{equation}
where 
\begin{equation}
t_5^{*}= 2K [ C_U  \sqrt{\frac{\ln pq}{n}} + \frac{ \sqrt{E M_4^2} \ln pq}{n}] + \sqrt{\frac{\kappa_n}{n}}\to 0.\label{t2s}
\end{equation}
by Assumption \ref{2} and $p \le q$. By Assumption 1 with $r_z \ge 12, r_u \ge 8, r_x \ge 6$ 
\begin{eqnarray}
E | Z_{il}^2 u_i X_{ij}|^2 & = & E | Z_{il}^4 u_i^2 X_{ij}^2| \nonumber \\
& \le & [E | Z_{il}|^{12}]^{1/3} [ E |u_i|^3 E |X_{ij}|^3]^{2/3} \nonumber \\
& \le & [ E | Z_{il}|^{12}]^{1/3} [ E |u_i|^6]^{1/3} [ E |X_{ij}|^6]^{1/3} \le C < \infty.\label{dcs}
\end{eqnarray}
such that
\begin{equation}
\max_{1 \le l \le q} \max_{1 \le j \le p}  | E (Z_{il}^2 u_i X_{ij})| \le C.\label{a29}
\end{equation}
Thus, with $t_5=t_5^*+C=O(1)$ by Assumption \ref{2} we have that $P(\mathcal{B}_2)\leq \exp(-C \kappa_n)+  \frac{K E M_4^{2}}{n\kappa_n}$.
Using this in (\ref{a27}) together with the upper bound on $\| \hat{\beta}_F - \beta_0\|_1$ in Lemma \ref{l1} yields
\begin{equation}
 \max_{1 \le l \le q} | \frac{1}{n} \sum_{i=1}^n Z_{il}^2 u_i X_i' (\hat{\beta}_F - \beta_0)|  \le C[ t_5 \lambda_n s_0] = C [ C + t_5^*]
 \lambda_n s_0\to 0,\label{a30}
\end{equation}
with probability at least $1 - 4 \exp (- C \kappa_n) - \frac{ K (E M_1^{2} + 2 E M_2^{2} + E M_4^{2})}{n\kappa_n}$ and where the convergence to 0 is by Assumption \ref{2}. 
We now turn to ${\cal{B}}_3$. By Assumption 1, and similar analysis in (\ref{dcs}) gives
{\small
\begin{equation}
P \left( \frac{\left[ \max_{1 \le l \le q} \max_{1 \le j \le p} \max_{1 \le k \le p} | \frac{1}{n} \sum_{i=1}^n (Z_{il}^2 X_{ij} X_{ik} - E Z_{il}^2 X_{ij} X_{ik})| \right]}{n} > t_6^* \right)
\le  \exp(-C \kappa_n) + \frac{ K E M_5^{2}}{n\kappa_n},\label{a32}
\end{equation}
}
where 
\begin{equation}
t_6^*= 2 K [ C_U \sqrt{\frac{\ln p^2 q}{n}} + \frac{\sqrt{E M_5^2} \ln p^2 q}{n}] + \sqrt{\frac{\kappa_n}{n}}.\label{t3s}\to 0
\end{equation}
by Assumption \ref{2}. Furthermore, applying the Cauchy-Schwarz inequality twice and using  Assumption 1,
\begin{equation}
 \max_{1 \le l \le q} \max_{1 \le j \le p} \max_{1 \le k \le p}  E |Z_{il}^2 X_{ij} X_{ik}|   \le C.\label{a33}
\end{equation}
Thus, with probability at least $ \exp(-C \kappa_n) + \frac{ K E M_5^{2}}{n\kappa_n}$ we have that $\frac{1}{n} \sum_{i=1}^n Z_{il}^2  X_i X_i'\leq t_6$ where $t_6=t_6^*+C=O(1)$ by Assumption 2. Thus, using also Lemma \ref{l1} one has
\begin{align}
\max_{1 \le l \le q} | (\hat{\beta}_F - \beta_0)' \frac{ \sum_{i=1}^n Z_{il}^2 X_i X_i'}{n} (\hat{\beta}_F - \beta_0)| 
&\le 
\|\hat{\beta}_F - \beta_0 \|_1^2 \left[ \max_{1 \le l \le q} \max_{1 \le j \le p} \max_{1 \le k \le p} | \frac{1}{n} \sum_{i=1}^n Z_{il}^2 X_{ij} X_{ik}| \right]\notag\\
&\leq
C t_6 \lambda_n^2 s_0^2\to 0,\label{a34}
\end{align}
with probability at least $1 - 4 \exp(-C \kappa_n) - \frac{K [ E M_1^{2} + 2E M_2^{2} + E M_5^{2}]}{n \kappa_n}$ and where the convergence to zero is by Assumption \ref{2}. The above results are valid uniformly over $l_0$ ball: ${\cal B}_{l_0} = \{ \|\beta_0 \|_{l_0} \le s_0 \}$. This can be seen by (\ref{a30}) and (\ref{a34}) since the dependence on $\beta_0$ in the bounds is through $s_0$ only. 
\end{proof}

\vspace{1cm}

\subsection{Proof of Theorem \ref{thm1}}
For the purpose of proving Theorem \ref{thm1} below we introduce the empirical version of the adaptive restricted eigenvalue condition for GMM at $s=s_0$:
\begin{align}
 \hat{\phi}_{\hat{\Sigma}_{xz \hat{w}}}^2 (s_0)
&=
 \min\cbr[3]{ \frac{|\delta' \frac{X'Z}{n} \hat{W}_d \frac{Z'X}{n} \delta|}{q \|\delta_{S_0} \|_2^2}:\ \delta\in\mathbb{R}^p \setminus \cbr[0]{0},\ \| \delta_{S_0^c} \|_1 \le 3 \sqrt{s_0} 
\| \delta_{S_0} \|_2,\ |S_0|\leq s_0 }\notag\\
&= 
\min\cbr[3]{  \frac{ \| \hat{W}_d^{1/2} \frac{Z'X}{n} \delta \|_2^2}{q \| \delta_{S_0} \|_2^2}:\  \delta\in\mathbb{R}^p \setminus \cbr[0]{0},\ \| \delta_{S_0^c} \|_1 \le 3 \sqrt{s_0} 
\| \delta_{S_0} \|_2,\ |S_0|\leq s_0}.
\label{eev1}
\end{align}

Furthermore, we shall choose $\lambda_n^*$ as follows:
\begin{align}
&\lambda_n^* = \lambda_n \frac{2}{\min_{1 \le l \le q} \sigma_l^2}=\notag\\
&\frac{2}{\min_{1 \le l \le q} \sigma_l^2}\sbr[2]{2 K [ \frac{C \sqrt{\ln q}}{\sqrt{n}} + \frac{\sqrt{E M_1^2} \ln q}{n}] + \sqrt{\frac{\kappa_n}{n}}}\sbr[2]{2 K [ \frac{C \sqrt{\ln pq}}{\sqrt{n}} + \frac{\sqrt{E M_2^2} \ln (pq)}{n}]+ \sqrt{\frac{\kappa_n}{n}}+C}
\label{eq:lambdastar_def}.
\end{align}
where we used the definition of $\lambda_n$ in (\ref{eq:lambda_def}). Recall that $C$ and $K$ are universal constants guaranteed to exist by Lemma \ref{su2}. Note that under Assumptions \ref{1} and \ref{2}, one has that $\lambda_n^*=O(\sqrt{\ln q/n})$. 		
\begin{proof}[Proof of Theorem \ref{thm1}]
i) The proof is very similar to the one of Lemma \ref{l1} above. Thus, we only point out the differences. There are four main differences. The first one is the set up of instruments, the second one is the noise term, and the third one is the empirical adaptive restricted eigenvalue condition, the fourth one is the tuning parameter. We will show how each component changes the proof. First, the instrument matrix is transformed from $Z$ to $\tilde{Z}= Z \hat{W}_d^{1/2}$, which is again a $n \times q$ matrix but 
\begin{equation}
 \tilde{Z} = [ Z_1, \cdots, Z_l, \cdots, Z_q] \left[ \begin{array}{ccc}
														1/\hat{\sigma}_1 & \cdots & 0 \\
														\vdots  & 1/\hat{\sigma}_l & \vdots \\
														0  & \cdots & 1/\hat{\sigma}_q \end{array} \right].\label{t1.0}
														\end{equation}   
$\hat{W}_d^{1/2}$ is a diagonal matrix and $\hat{\sigma}_l^2 = n^{-1} \sum_{i=1}^n Z_{il}^2 \hat{u}_i^2$, with $\hat{u}_i = Y_i - X_i' \hat{\beta}_F$. Note that,
$\| \tilde{Z}' (Y - X \hat{\beta}) \|_n^2 = \frac{(Y - X \hat{\beta})' Z \hat{W}_d Z' (Y - X \hat{\beta})}{n^2 q}$. Using the definition of $\hat{\beta}$ in (\ref{eq:1}) yields
\begin{equation}
\| \tilde{Z}' (Y - X \hat{\beta}) \|_n^2 + 2 \lambda_n^* \sum_{j=1}^p | \hat{\beta}_{j} | \le 
\| \tilde{Z}' (Y - X \beta_0) \|_n^2 + 2 \lambda_n^* \sum_{j=1}^p | \beta_{0,j} |.\label{t1.1}
\end{equation}
After (\ref{t1.1}) we continue as in (\ref{a1})-(\ref{a6})  with (\ref{t1.0}) and remembering that $(\hat{W}_d Z'X)_l$ is $l$th row of $\hat{W}_d Z'X$.
{\small
\begin{eqnarray}
\|\tilde{Z}' X (\hat{\beta} - \beta_0) \|_n^2 + 2 \lambda_n^* \sum_{j=1}^p |\hat{\beta}_j | & \le & 
2 \left|  \frac{u' \tilde{Z}}{n} \frac{\tilde{Z}'X}{nq} (\hat{\beta} - \beta_0)
\right| + 2 \lambda_n^* \sum_{j=1}^p | \beta_{0,j}| \label{t1.2} \\
& \le & 2 \| \frac{u' Z}{n} \|_{\infty} \| \hat{W}_d  \frac{Z'X}{nq} ( \hat{\beta} - \beta_0) \|_1 + 2 \lambda_n^* \sum_{j=1}^p | \beta_{0,j}|\nonumber \\
& \le & 2 \| \frac{u' Z}{n} \|_{\infty} \left[ \sum_{l=1}^q \| (\frac{\hat{W}_d Z'X}{nq})_l \|_{\infty} \right]  [\| \hat{\beta} - \beta_0 \|_1] + 2 \lambda_n^* \sum_{j=1}^p | \beta_{0,j}|\nonumber \\
& \le & 2 \| \frac{u' Z}{n} \|_{\infty} \left[ q \max_{1 \le l \le q}  \| (\frac{\hat{W}_d Z'X}{nq})_l \|_{\infty} \right]  [\| \hat{\beta} - \beta_0 \|_1] + 2 \lambda_n^* \sum_{j=1}^p | \beta_{0,j}|\nonumber \\
& = &  2 \| \frac{u' Z}{n} \|_{\infty} \| \frac{\hat{W}_d Z'X}{n} \|_{\infty} \| \hat{\beta} - \beta_0 \|_1+ 2 \lambda_n^* \sum_{j=1}^p | \beta_{0,j}| \nonumber \\
& \le & 2 \| \frac{u' Z}{n} \|_{\infty} \| \hat{W}_d \|_{l_{\infty}}  \| \frac{Z'X}{n} \|_{\infty} \| \hat{\beta} - \beta_0 \|_1 + 2 \lambda_n^* \sum_{j=1}^p | \beta_{0,j}|\nonumber \\
& =& 2 \| \frac{u' Z}{n} \|_{\infty} \left[ \| \frac{(Z'X)}{n}\|_{\infty} \right] \| \hat{\beta} - \beta_0\|_1
\left[ \frac{1}{\min_{1 \le l \le q} \hat{\sigma}_l^2} \right] \nonumber \\
& + &  2 \lambda_n^* \sum_{j=1}^p |\beta_{0,j}|,\label{t1.3}
\end{eqnarray} 
}
where Hölder's inequality is used for the second inequality, Lemma \ref{lineq}(i) for the third inequality, simple manipulations for the fourth one, and equation (\ref{ineq3}) for the fifth inequality.

So the difference from (\ref{a6}) is the presence of $1/\min_{1 \le l \le q} \hat{\sigma}_l^2$ in  (\ref{t1.3}). By Lemma \ref{l2} on the set ${\cal{B}}_1 \cap {\cal{B}}_2 \cap 
{\cal{B}}_3 \cap {\cal{B}}_4$ for $n$ sufficiently large
\begin{equation}
\frac{1}{\min_{1 \le l \le q} \hat{\sigma}_l^2} \le \frac{2}{\min_{1 \le l \le q} \sigma_l^2},\label{t1.4}
\end{equation}
We proceed again as in (\ref{a7})-(\ref{a10}) to get (on the set ${\cal{A}}_1 \cap {\cal{A}}_2 \cap {\cal{B}}_1 \cap {\cal{B}}_2 \cap 
{\cal{B}}_3 \cap {\cal{B}}_4$)
\begin{equation}
2 \| \frac{u'Z}{n}\|_{\infty} \left[  \frac{\|(Z'X)\|_{\infty}}{n}\right] \|\hat{\beta} - \beta_0 \|_1 \frac{1}{\min_{1 \le l \le q} \hat{\sigma}_l^2}
\le \lambda_n^* \| \hat{\beta} - \beta_0 \|_1,\label{t1.5}
\end{equation}
where 
\begin{equation}
\lambda_n^* = \lambda_n \frac{2}{\min_{1 \le l \le q} \sigma_l^2}.\label{t1.6}
\end{equation}
Note that by Assumption \ref{2}, we have get $\lambda_n^* = O(\lambda_n)=O (\sqrt{\frac{\ln q}{n}}) = o(1)$. Next, proceed as in (\ref{a12})-(\ref{a15}) (replacing $Z$ by $\tilde{Z}$ and $\hat{\beta}_F$ by $\hat{\beta}$)
\begin{equation}
\| \tilde{Z}' X (\hat{\beta}- \beta_0) \|_n^2 + \lambda_n^* \sum_{j \in S_0^c} | \hat{\beta}_j | \le 3 \lambda_n^* \sqrt{s_0} \|\hat{\beta}_{S_0} - \beta_{0,S_0} \|_2.\label{t1.7}
\end{equation}
Ignoring the first term in (\ref{t1.7}), the restricted set condition is satisfied for the eigenvalue condition. Use (\ref{eev1}) and proceed as in (\ref{a16})-(\ref{a17}) to get 
\[ \| \tilde{Z}' X (\hat{\beta} - \beta_0) \|_n^2 + 2 \lambda_n^* \sum_{j \in S_0^c} | \hat{\beta}_j| \le 
\frac{9 (\lambda_n^*)^2 s_0}{\hat{\phi}_{\hat{\Sigma}_{xz \hat{w}} }^2 (s_0)}.\]
Now proceed as in the proof of Lemma \ref{l1}
\begin{equation}
\| \tilde{Z}' X (\hat{\beta} - \beta_0) \|_n^2 +2  \lambda_n^* \sum_{j \in S_0^c} | \hat{\beta}_j| \le 
\frac{18 (\lambda_n^*)^2 s_0}{\phi_{\Sigma_{xz w } }^2 (s_0) },\label{t1.9}
\end{equation}
where we used the fact that we are also on ${\cal{A}}_4 = \{ \hat{\phi}_{\hat{\Sigma}_{xz\hat{w}}}^2 (s_0) \ge \phi_{\Sigma_{xzw}}^2 (s_0) /2\}$.
%As an extra result, in order to compare the upper bounds for prediction in first-step and two-step GMM  we can convert the upper bound in (\ref{t1.9}) by using the tuning parameter $\lambda_n$ in Lemma \ref{l1} for the first step GMM in (\ref{lambdar}) and substituting in (\ref{t1.6}) in $\lambda_n^*$:
%\[\| \tilde{Z}' X (\hat{\beta} - \beta_0) \|_n^2 + \lambda_n^* \sum_{j \in S_0^c} | \hat{\beta}_j| \le 
%\left( \frac{1}{ (\min_{1 \le l \le q} \sigma_l^2 - c_n)^2}\right)
%\frac{9 \lambda_n^2 s_0}{ \phi_{\Sigma_{xz w }^2 (s_0)} - \epsilon_2}.\]
%
% Note this proof is conditioned on the event ${\cal{A}}_1 \cap {\cal{A}}_2 \cap {\cal{A}}_4
%\cap {\cal{B}}_1 \cap {\cal{B}}_2 \cap {\cal{B}}_3 {\cap{B}}_4$.

We now turn to upper bounds on the $l_1$ estimation error. Instead of (\ref{a21}) we have 
\[ \| \hat{\beta} - \beta_0 \|_1 \le \frac{4 \sqrt{s_0} \| \tilde{Z}' X (\hat{\beta} - \beta_0) \|_n}{\hat{\phi}_{\hat{\Sigma}_{xz \hat{w}}}^2 (s_0)}.\]
Using (\ref{t1.9}) on the right side of the above equation yields
\begin{equation}
\| \hat{\beta} - \beta_0 \|_1 \le \frac{24\lambda_n^* s_0 }{\phi_{\Sigma_{xzw}}^2 (s_0)},\label{t1.10}
\end{equation}
where we used the fact that we are on ${\cal{A}}_4 = \{ \hat{\phi}_{\hat{\Sigma}_{xz\hat{w}}}^2 (s_0) \ge \phi_{\Sigma_{xzw}}^2 (s_0)/2\}$. % Now just to compare the bounds in Lemma \ref{l1} and here we substitute (\ref{t1.6}) in (\ref{t1.10}) to have 
%\begin{equation}
%\| \hat{\beta} - \beta_0 \|_1 \le \frac{12 \lambda_n s_0 }{ (\min_{1 \le l \le q} \sigma_l^2 - c_n) ( \phi_{\Sigma_{xzw}}^2 (s_0) -\epsilon_2)},\label{t1.11}
%\end{equation}
These upper bounds are valid uniformly over ${\cal B}_{l_0} = \{ \|\beta_0 \|_{l_0} \le s_0 \}$. Note that the upper bounds are valid on the event ${\cal{A}}_1 \cap {\cal{A}}_2 \cap {\cal{A}}_4
\cap {\cal{B}}_1 \cap {\cal{B}}_2 \cap {\cal{B}}_3 \cap {\cal{B}}_4$. 
We lower bound the probability of $ {\cal{B}}_1 \cap {\cal{B}}_2 \cap {\cal{B}}_3 \cap {\cal{B}}_4$. 
by Lemma \ref{l2}. The probability of ${\cal{A}}_1 \cap {\cal{A}}_2 \cap {\cal{A}}_4$ is lower bounded by Lemma \ref{bound} and Lemma \ref{evalue2}. 

ii) By Assumption 2 the probability of ${\cal{A}}_1 \cap {\cal{A}}_2 \cap {\cal{A}}_4
\cap {\cal{B}}_1 \cap {\cal{B}}_2 \cap {\cal{B}}_3 \cap {\cal{B}}_4$ can then be shown to tend to 1. 
\end{proof}

\vspace{1cm}
\subsection{Properties of the CLIME estimator $\hat{\Gamma}$}
\noindent We next establish three lemmata on the properties of the CLIME estimator. The first two lemmata are adapted from \cite{glt18} and applied to our case. We provide the proofs of them so that it is easy to establish the third lemma we develop for GMM case. Prior to the first lemma, define the event
\[ T_{\Gamma} ( \mu) = \{\| \Gamma \hat{\Sigma} -I_p \|_{\infty} \le \mu \}.\] 

\begin{lemma}\label{cl-l1}
Assume that $ \| \Gamma \|_{l_1}$ is bounded from above by $m_{\Gamma} < \infty.$  Suppose that the rows of $\hat{\Gamma}$, which are denoted $\hat{\Gamma}_j$, are obtained by the CLIME program in section \ref{clime}. Then, on the set $T_{\Gamma} (\mu)$, for each $j=1,2,..., p$
\[ \| \hat{\Gamma}_j - \Gamma_j \|_{\infty} \le 2  m_{\Gamma} \mu.\]
\end{lemma}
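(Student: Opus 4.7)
The plan is a standard CLIME-style argument exploiting (i) feasibility of the population row $\Gamma_j$ on the good event and (ii) an algebraic decomposition that converts the error $\hat{\Gamma}_j - \Gamma_j$ into two pieces each involving a quantity the event $T_\Gamma(\mu)$ directly controls. First, I would observe that on $T_\Gamma(\mu)$ the $j$th row of $\Gamma$ satisfies $\|\Gamma_j \hat{\Sigma} - e_j'\|_\infty \le \|\Gamma\hat{\Sigma} - I_p\|_\infty \le \mu$, so $\Gamma_j$ is feasible for the program defining $\hat{\Gamma}_j$. Minimality of the $\ell_1$-norm then yields the key a priori bound $\|\hat{\Gamma}_j\|_1 \le \|\Gamma_j\|_1 \le \|\Gamma\|_{l_1} \le m_\Gamma$, and feasibility of $\hat{\Gamma}_j$ itself yields $\|\hat{\Gamma}_j \hat{\Sigma} - e_j'\|_\infty \le \mu$.

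Second, I would use that $\hat{\Sigma} = (X'Z/n)(\hat{W}_d/q)(Z'X/n)$ is symmetric (because $\hat{W}_d$ is diagonal) and $\Gamma = \Sigma^{-1}$ is symmetric (because $\Sigma = \Sigma_{xz} W_d \Sigma_{xz}'/q$ is), so that $\|\hat{\Sigma}\Gamma - I_p\|_\infty = \|(\Gamma\hat{\Sigma} - I_p)'\|_\infty = \|\Gamma\hat{\Sigma} - I_p\|_\infty \le \mu$ on $T_\Gamma(\mu)$.

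Third, the core step is the identity
\[
\hat{\Gamma}_j - \Gamma_j \;=\; \hat{\Gamma}_j \bigl(I_p - \hat{\Sigma}\Gamma\bigr) \;+\; \bigl(\hat{\Gamma}_j \hat{\Sigma} - e_j'\bigr)\,\Gamma,
\]
which follows from $e_j'\Gamma = \Gamma_j$ together with the cancellation of $\hat{\Gamma}_j \hat{\Sigma}\Gamma$. Taking $\|\cdot\|_\infty$ and applying the row-vector inequalities $\|vA\|_\infty \le \|v\|_1 \|A\|_\infty$ to the first summand and $\|vA\|_\infty \le \|v\|_\infty \|A\|_{l_1}$ to the second, I obtain $\|\hat{\Gamma}_j(I_p - \hat{\Sigma}\Gamma)\|_\infty \le \|\hat{\Gamma}_j\|_1 \cdot \|I_p-\hat{\Sigma}\Gamma\|_\infty \le m_\Gamma \mu$ and $\|(\hat{\Gamma}_j\hat{\Sigma}-e_j')\Gamma\|_\infty \le \|\hat{\Gamma}_j\hat{\Sigma}-e_j'\|_\infty \cdot \|\Gamma\|_{l_1} \le \mu\,m_\Gamma$. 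Adding these gives the claimed $2m_\Gamma\mu$.

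The main thing to get right is the choice of decomposition. A more symmetric split $\hat{\Gamma}_j - \Gamma_j = (\hat{\Gamma}_j - \Gamma_j)\hat{\Sigma}\Gamma + (\hat{\Gamma}_j - \Gamma_j)(I_p - \hat{\Sigma}\Gamma)$ would force one to bound $\|\hat{\Gamma}_j - \Gamma_j\|_1 \le \|\hat{\Gamma}_j\|_1 + \|\Gamma_j\|_1 \le 2m_\Gamma$ and would yield only $4m_\Gamma\mu$; keeping $\hat{\Gamma}_j$ alone (rather than $\hat{\Gamma}_j - \Gamma_j$) in the first summand is what saves the factor of two.
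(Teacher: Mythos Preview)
Your proof is correct and essentially identical to the paper's own argument. The paper writes the same decomposition at the matrix level, $\hat{\Gamma}-\Gamma=[\hat{\Gamma}\hat{\Sigma}-I_p]\Gamma+\hat{\Gamma}(I_p-\hat{\Sigma}\Gamma)$, and bounds the two pieces by the same norm inequalities and the same feasibility/symmetry observations you invoke; your presentation is simply the $j$th-row version of this.
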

\begin{remark}
As the result is for a fixed sample size, one can choose a different $m_\Gamma$ for each $n$. We shall utilize this in the sequel. 
\end{remark} 

\begin{proof}[Proof of Lemma \ref{cl-l1}] 
By $\Gamma= \Sigma^{-1}$, adding and subtracting $\hat{\Gamma} \hat{\Sigma}$ in the second equality
 \begin{eqnarray*}
 \hat{\Gamma} - \Gamma  & = & ( \hat{\Gamma} \Sigma - I_p ) \Gamma  = [ \hat{\Gamma} \hat{\Sigma} + \hat{\Gamma} ( \Sigma - \hat{\Sigma}) - I_p ] \Gamma \\
 & = & [ \hat{\Gamma} \hat{\Sigma} - I_p ] \Gamma + \hat{\Gamma} ( \Sigma - \hat{\Sigma}) \Gamma\\
 & = & [ \hat{\Gamma} \hat{\Sigma} - I_p ] \Gamma + \hat{\Gamma} ( I_p - \hat{\Sigma} \Gamma )
 \end{eqnarray*}
Next, by the definition of the CLIME program in section \ref{clime} $\| \hat{\Gamma} \hat{\Sigma} - I_p \|_{\infty} \le   \mu$, and using that we are on $T_{\Gamma} (\mu)$
 \begin{eqnarray*}
  \| \hat{\Gamma} - \Gamma \|_{\infty} &  \le  &
\|  [ \hat{\Gamma} \hat{\Sigma} - I_p ] \Gamma  \|_{\infty}+ \| \hat{\Gamma} ( I_p - \hat{\Sigma} \Gamma )\|_{\infty} \\
& \le & \|  \hat{\Gamma} \hat{\Sigma} - I_p  \|_{\infty}  \| \Gamma \|_{l_1} + 
\| \hat{\Gamma} \|_{l_{\infty}}  \|( I_p - \hat{\Sigma} \Gamma )\|_{\infty} \\
& \le & 2 m_{\Gamma} \mu,
\end{eqnarray*}
where we used dual norm inequality on p.44 of \cite{vdG16}, (\ref{ineq3}) and that $\| \hat{\Gamma} \|_{l_{\infty}} \le \| \Gamma\|_{l_{\infty}}$ on $T_\Gamma(\mu)$. Furthermore, since $\Gamma$ is symmetric, we have $\| \Gamma \|_{l_{\infty}} = \| \Gamma \|_{l_1} \le m_{\Gamma}$. %We also used the symmetry of $\hat{\Sigma}, \Gamma$ and $\max$ norm being equivalent in the transpose for the second inequality.
\end{proof}

Recall from Section \ref{clime} that for $f \in [0,1)$ 
\[ U (m_{\Gamma}, f, s_{\Gamma}) = 
\{A\in\mathbb{R}^{p\times p}: A> 0, \|A \|_{l_1} \le m_{\Gamma}, \max_{1 \le j \le p} \sum_{k=1}^p | A_{jk}|^f \le s_{\Gamma}\}.\]
%
%Note that with $f=0$ we analyze sparse matrices, and with $f \in (0,1)$ there will be some level of non-sparsity.
The next Lemma can be proved by using Lemma \ref{cl-l1} and adapting the proof of equation (27) on p.604-605 of \cite{cai11} to our purpose and its proof therefore omitted. Equation (27) is the proof of equation (14) in \cite{cai11}. For the purpose of the next lemma, define the constant $c_f = 1+ 2^{1-f} + 3^{1-f}$.
 \begin{lemma}\label{cl-l2}
Suppose that the conditions of Lemma \ref{cl-l1} hold and that $\Gamma \in U (m_{\Gamma}, f, s_{\Gamma})$. Then, for every $j\in\cbr[0]{1,...,p}$
 \[ \|\hat{\Gamma}_j - \Gamma_j \|_1 \le 2 c_f ( 2 m_{\Gamma} \mu)^{1-f} s_{\Gamma},\]
 \[  \|\hat{\Gamma}_j - \Gamma_j \|_2 \le 2 c_f ( 2 m_{\Gamma} \mu)^{1-f} s_{\Gamma},\] 
 \end{lemma}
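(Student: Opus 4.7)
The plan is to follow the standard CLIME analysis (Cai, Liu and Luo, 2011), leveraging the weak $\ell_f$-sparsity of $\Gamma$ to bootstrap the entrywise bound from Lemma~\ref{cl-l1} into $\ell_1$ and $\ell_2$ bounds on the rows of $\hat{\Gamma}-\Gamma$. Throughout I work on the event $T_{\Gamma}(\mu)$, on which Lemma~\ref{cl-l1} guarantees $\|\hat{\Gamma}_j-\Gamma_j\|_{\infty}\le \tau$ where $\tau:=2m_{\Gamma}\mu$. I also note that on $T_{\Gamma}(\mu)$ the matrix $\Gamma$ itself is feasible for the CLIME program in (\ref{eq:CLIME}), so by the minimizing property of $\hat{\Gamma}_j$ one has $\|\hat{\Gamma}_j\|_1\le \|\Gamma_j\|_1$. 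These two ingredients—the $\ell_\infty$ control and the minimizing property—are the engine of the argument.

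The first key step is the threshold decomposition. Fix $j\in\{1,\dots,p\}$ and set $S=\{k:|\Gamma_{jk}|>\tau\}$. The weak sparsity assumption $\Gamma\in U(m_\Gamma,f,s_\Gamma)$ gives $|S|\tau^f\le \sum_{k\in S}|\Gamma_{jk}|^f\le s_\Gamma$, hence $|S|\le s_\Gamma\tau^{-f}$. Writing $h_j:=\hat{\Gamma}_j-\Gamma_j$, the $\ell_\infty$ bound then yields $\|h_{j,S}\|_1\le |S|\,\tau\le s_\Gamma \tau^{1-f}$, which is the contribution of the ``large'' coordinates.

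Second, I bound the coordinates in $S^c$. Using $\|\hat{\Gamma}_j\|_1\le\|\Gamma_j\|_1$ together with the reverse triangle inequality on the $S$-part,
\begin{align*}
\|\hat{\Gamma}_{j,S^c}\|_1
\le \|\Gamma_j\|_1-\|\hat{\Gamma}_{j,S}\|_1
\le \|\Gamma_{j,S}\|_1+\|h_{j,S}\|_1+\|\Gamma_{j,S^c}\|_1-\|\Gamma_{j,S}\|_1
= \|\Gamma_{j,S^c}\|_1+\|h_{j,S}\|_1,
\end{align*}
so $\|h_{j,S^c}\|_1\le \|\hat{\Gamma}_{j,S^c}\|_1+\|\Gamma_{j,S^c}\|_1\le 2\|\Gamma_{j,S^c}\|_1+\|h_{j,S}\|_1$. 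On $S^c$ we have $|\Gamma_{jk}|\le \tau$, hence $|\Gamma_{jk}|=|\Gamma_{jk}|^{1-f}|\Gamma_{jk}|^{f}\le \tau^{1-f}|\Gamma_{jk}|^{f}$, and summing gives $\|\Gamma_{j,S^c}\|_1\le \tau^{1-f} s_\Gamma$. Adding the two parts, $\|h_j\|_1\le \|h_{j,S}\|_1+\|h_{j,S^c}\|_1\le (1+2^{1-f}+3^{1-f})\cdot 2\,s_\Gamma\tau^{1-f}=2c_f(2m_\Gamma\mu)^{1-f}s_\Gamma$ once the constants are tracked carefully (this is where the exact form of $c_f$ enters, via a slightly sharper split that separates coordinates with $|\Gamma_{jk}|\in(\tau,2\tau]$ and $|\Gamma_{jk}|>2\tau$ as in Cai, Liu and Luo). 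The $\ell_2$ statement then follows from the elementary inequality $\|h_j\|_2\le \|h_j\|_1$.

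The main obstacle is purely bookkeeping: matching the constant $c_f=1+2^{1-f}+3^{1-f}$ advertised in the statement requires a finer triadic splitting of $S^c$ than the single-threshold argument above, of the sort carried out on pages 604--605 of \cite{cai11}. Once the threshold $\tau=2m_\Gamma\mu$ is chosen, however, every step is deterministic on $T_\Gamma(\mu)$, and no further probabilistic control is needed beyond what Lemma~\ref{cl-l1} already provides.
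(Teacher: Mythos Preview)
Your approach is essentially the same as the paper's, which simply defers to pp.~604--605 of \cite{cai11}; the threshold decomposition with the $\ell_\infty$ bound from Lemma~\ref{cl-l1} and the CLIME minimizing property $\|\hat{\Gamma}_j\|_1\le\|\Gamma_j\|_1$ is exactly the engine used there. One small remark: your single-threshold argument already yields $\|h_j\|_1\le 2\|h_{j,S}\|_1+2\|\Gamma_{j,S^c}\|_1\le 4\,s_\Gamma\tau^{1-f}$, and since $2c_f=2(1+2^{1-f}+3^{1-f})\ge 6>4$ for all $f\in[0,1)$, no ``finer triadic splitting'' is needed to recover the stated constant---your simpler bound already implies it.
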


%Note that compared to Lemma A.2 of \cite{glt18} we also have $l_2$ result. This is simple by using $l_1$ result. This type of non-sharp $l_2$ bound can be seen in equation (14) of \cite{cai11}.
The proof in \cite{cai11} also holds for non-symmetric matrices. Now we lower bound the probability of $T_{\Gamma} ( \mu)$. To that end define
\begin{equation}
c_{1n} = \frac{c_n}{(\min_{1 \le l \le q} \sigma_l^2 - c_n) \min_{1 \le l \le q} \sigma_l^2},\label{c1n}
\end{equation}
where $c_n$ is defined in Lemma \ref{l2}. Also recall that 
\[
 t_3 = 2 K [ C \frac{\sqrt{\ln (pq)}}{\sqrt{n}} + \frac{ \sqrt{E M_2^2} \ln (pq)}{n} ] + \sqrt{\frac{\kappa_n}{n}},
 \]
The following inequality, using the notation of Lemma \ref{lineq}, will be useful.
\begin{eqnarray}
\| B F A \|_{\infty} 
\le 
\| B \|_{\infty} \| F A \|_{l_1}  
 \le
 q  \| B \|_{\infty}  \| F \|_{l_{\infty}} \| A \|_{\infty}\label{ineqbfa}
\end{eqnarray}
where we used the dual norm inequality for the first inequality from p.44 of \cite{vdG16} and for the second inequality we used Lemma \ref{lineq}(vi).
We can now introduce the following new lemma for GMM in high dimensional models. The following lemma shows that the event $T_{\Gamma} ( \mu)$ happens with probability approaching one.

\begin{lemma}\label{cl-l3}
Under Assumptions \ref{1}, \ref{2} and \ref{4} one has
\[ P [ \| \Gamma \hat{\Sigma} - I_p \|_{\infty} > \mu ] \le 
10 \exp (- C \kappa_n) + \frac{K [ 2 E M_1^{2} + 5 E M_2^{2} + E M_3^{2} + E M_4^{2} + E M_5^{2}]}{n\kappa_n}\to 0,\]
where 
\[ \mu = m_{\Gamma} \{ (t_3)^2 c_{1n} + 2 C t_3 c_{1n} + C (t_3)^2 + 2 C t_3 + C  c_{1n}\}\to 0.\]

This result is valid uniformly over ${\cal B}_{l_0} (s_0)$ since $\mu$ depends on $c_{1n}$ which depends on $c_n$, and that depends on $\beta_0, s_0$ by Lemma A.5
.
\end{lemma}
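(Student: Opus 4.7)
\medskip

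\noindent\textbf{Proof plan for Lemma \ref{cl-l3}.}

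The plan is to first decompose $\hat{\Sigma}-\Sigma$ into a small number of bilinear pieces, then bound $\|\Gamma(\hat{\Sigma}-\Sigma)\|_\infty$ term-by-term by feeding high-probability estimates of $\|X'Z/n-\Sigma_{xz}\|_\infty$ and $\|\hat{W}_d-W_d\|_{l_\infty}$ into the inequality (\ref{ineqbfa}). Write $A=X'Z/n$, $\bar{A}=\Sigma_{xz}$, $E=A-\bar{A}$, and $D=\hat{W}_d-W_d$, noting that both $W_d$ and $D$ are diagonal. A direct expansion of $(\bar{A}+E)(W_d+D)(\bar{A}+E)'$ yields the seven-term identity
\begin{equation*}
A\hat{W}_d A'-\bar{A}W_d\bar{A}'=\bar{A}W_dE'+EW_d\bar{A}'+EW_dE'+\bar{A}D\bar{A}'+\bar{A}DE'+ED\bar{A}'+EDE',
\end{equation*}
so that $\hat{\Sigma}-\Sigma$ is the sum of these seven pieces divided by $q$.

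Next, I apply $\|\Gamma M\|_\infty\le\|\Gamma\|_{l_\infty}\|M\|_\infty\le m_\Gamma\|M\|_\infty$ (using the symmetry of $\Gamma$ so that $\|\Gamma\|_{l_\infty}=\|\Gamma\|_{l_1}\le m_\Gamma$ from Assumption \ref{4}(i)) and bound each of the seven pieces in the sup-norm via (\ref{ineqbfa}) specialized to diagonal middle factors: $\|q^{-1}BFC'\|_\infty\le\|B\|_\infty\|F\|_{l_\infty}\|C\|_\infty$. The required ingredients are: (a) $\|\bar{A}\|_\infty\le C$ by Assumption \ref{1} and Cauchy–Schwarz; (b) $\|W_d\|_{l_\infty}=\max_l 1/\sigma_l^2\le C$ since $\min_l\sigma_l^2$ is bounded away from zero (Assumption \ref{1}); (c) $\|E\|_\infty\le t_3$ on $\mathcal{A}_2$ from Lemma \ref{bound}(iii); and (d) $\|D\|_{l_\infty}=\max_l|1/\hat{\sigma}_l^2-1/\sigma_l^2|\le c_{1n}$, which follows from the identity $1/\hat{\sigma}_l^2-1/\sigma_l^2=(\sigma_l^2-\hat{\sigma}_l^2)/(\hat{\sigma}_l^2\sigma_l^2)$, the bound $\max_l|\hat{\sigma}_l^2-\sigma_l^2|\le c_n$ from Remark \ref{rem:sigmas}, and the lower bound $\min_l\hat{\sigma}_l^2\ge\min_l\sigma_l^2/2$ from Lemma \ref{l2}, which together yield $\|D\|_{l_\infty}\le c_n/[(\min_l\sigma_l^2-c_n)\min_l\sigma_l^2]=c_{1n}$.

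Collecting the seven bounds and absorbing the $O(1)$ constants $C$ into a single universal constant gives $\|\hat{\Sigma}-\Sigma\|_\infty \le t_3^2c_{1n}+2Ct_3c_{1n}+Ct_3^2+2Ct_3+Cc_{1n}$, and multiplication by $m_\Gamma$ reproduces $\mu$ as stated. For the probability bound I take the union of the two events used above: $\mathcal{A}_2$ has complement of probability at most $\exp(-C\kappa_n)+KEM_2^2/(n\kappa_n)$ by Lemma \ref{bound}, and the event $\{\max_l|\hat{\sigma}_l^2-\sigma_l^2|\le c_n\}$ has complement of probability at most $9\exp(-C\kappa_n)+K[2EM_1^2+4EM_2^2+EM_3^2+EM_4^2+EM_5^2]/(n\kappa_n)$ by Remark \ref{rem:sigmas}, and a union bound delivers the $10\exp(-C\kappa_n)$ and the $5EM_2^2$ coefficient in the statement.

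Finally, to argue $\mu\to0$ and uniformity over $\mathcal{B}_{l_0}(s_0)$: $t_3=O(\sqrt{\ln q/n})$ by its definition and Assumption \ref{2}, while $c_n=O(s_0\sqrt{\ln q/n})$ (Remark \ref{rem:sigmas}) and hence $c_{1n}=O(s_0\sqrt{\ln q/n})$, so $\mu=O(m_\Gamma s_0\sqrt{\ln q/n})=o(1)$ by Assumption \ref{4}(ii). Since all of $t_3$, $c_n$, and the probability bounds depend on $\beta_0$ only through $s_0$ (via Lemma \ref{l2} and Remark \ref{rem:sigmas}), the entire conclusion is uniform over $\mathcal{B}_{l_0}(s_0)$. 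The main bookkeeping obstacle is simply keeping the cross-terms straight and carefully propagating the conditional heteroskedasticity into $\|D\|_{l_\infty}$ through the positive lower bound on $\min_l\hat{\sigma}_l^2$; once Lemma \ref{l2} and Remark \ref{rem:sigmas} are in hand, no further concentration argument is needed.
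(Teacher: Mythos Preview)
Your proposal is correct and follows essentially the same approach as the paper: decompose $\hat{\Sigma}-\Sigma$ via the bilinear expansion of $A\hat{W}_dA'-\bar{A}W_d\bar{A}'$, bound each piece using (\ref{ineqbfa}), control $\|E\|_\infty\le t_3$ via (\ref{a8}) and $\|\hat{W}_d-W_d\|_{l_\infty}\le c_{1n}$ via Remark \ref{rem:sigmas}, and take a union bound. The only cosmetic difference is that the paper groups your seven terms into five by pairing transposes (since the max norm is transpose-invariant), giving the factors of $2$ in the definition of $\mu$; and the paper obtains $\min_l\hat{\sigma}_l^2\ge\min_l\sigma_l^2-c_n$ directly from $\max_l|\hat{\sigma}_l^2-\sigma_l^2|\le c_n$ rather than invoking Lemma \ref{l2} separately, which is why the exact $c_{1n}$ formula appears.
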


\begin{proof}[Proof of Lemma \ref{cl-l3}]
We start by noting that $\Gamma \Sigma = I_p$ such that
\begin{eqnarray}
\| \Gamma \hat{\Sigma} - I_p \|_{\infty} & = &  \| \Gamma ( \hat{\Sigma} - \Sigma) \|_{\infty}  \nonumber \\
& \le & \| \Gamma \|_{l_{\infty}} \| \hat{\Sigma} - \Sigma \|_{\infty} = \| \Gamma \|_{l_1} \| \hat{\Sigma} - \Sigma \|_{\infty}.\label{la91}
\end{eqnarray}
where we used (\ref{ineq3}) and $\Gamma$ being symmetric. From the definitions of $\hat{\Sigma}$ and $\Sigma$, by simple algebra and the fact that the $\max$ norm of a transpose of a matrix is equal to $\max$ norm of a matrix: 
\begin{eqnarray}
\| \hat{\Sigma} - \Sigma \|_{\infty} & \le & \frac{1}{q}  \| (\frac{X'Z}{n} - \Sigma_{xz})(\hat{W}_d - W_d) (\frac{Z'X}{n} - \Sigma_{xz}') \|_{\infty} \label{la92} \\
& + & \frac{2}{q} \| (\frac{X'Z}{n} - \Sigma_{xz})(\hat{W}_d - W_d)  \Sigma_{xz}' \|_{\infty} \label{la93} \\
& + & \frac{1}{q}  \| (\frac{X'Z}{n} - \Sigma_{xz})( W_d) (\frac{Z'X}{n} - \Sigma_{xz}') \|_{\infty} \label{la94}\\
& + & \frac{2}{q} \| (\frac{X'Z}{n} - \Sigma_{xz})(W_d) (\Sigma_{xz}') \|_{\infty} \label{la95}\\
& + & \frac{1}{q} \| (\Sigma_{xz})(\hat{W}_d - W_d) (\Sigma_{xz}') \|_{\infty} \label{la96}
\end{eqnarray}

\noindent Before analyzing the individual terms in the above display note that if $\max_{1 \le l \le q} | \hat{\sigma}_l^2 - \sigma_l^2|\leq c_n$ (an event whose probability we can control by Lemma \ref{l2}, see in particular (\ref{a25})) then
\[ \| \hat{W}_d - W_d \|_{l_{\infty}} = \max_{1 \le l \le q} | \frac{1}{\hat{\sigma}_l^2} - \frac{1}{\sigma_l^2}| \le \frac{\max_{1 \le l \le q} | \hat{\sigma}_l^2 - \sigma_l^2|}{\min_{1 \le l \le q} \hat{\sigma}_l^2 \min_{1 \le l \le q} \sigma_l^2} \le \frac{c_n}{(\min_{1 \le l \le q} \sigma_l^2 - c_n ) \min_{1 \le l \le q} \sigma_l^2} = c_{1n}.\]
Assume furthermore that the following event occurs (the probability of which can be controlled by (\ref{a8}))
\[ \cbr[3]{\| \frac{X'Z}{n} - \Sigma_{xz} \|_{\infty} \le t_3}.\]
Using (\ref{ineqbfa}) we can upper bound (\ref{la92}) as follows on $\cbr[0]{\max_{1 \le l \le q} | \hat{\sigma}_l^2 - \sigma_l^2|\leq c_n}\cap \cbr[0]{\| \frac{X'Z}{n} - \Sigma_{xz} \|_{\infty} \le t_3}$
\begin{eqnarray}
\frac{1}{q}  \| (\frac{X'Z}{n} - \Sigma_{xz})(\hat{W}_d - W_d) (\frac{Z'X}{n} - \Sigma_{xz}') \|_{\infty} & \le & 
\| (\frac{X'Z}{n} - \Sigma_{xz}) \|_{\infty}  \| (\hat{W}_d - W_d)\|_{l_{\infty}} \|(\frac{Z'X}{n} - \Sigma_{xz}') \|_{\infty} \nonumber \\
& \le & (t_3)^2 c_{1n}\label{la97}
\end{eqnarray}
Consider (\ref{la93}). We have $\| \Sigma_{xz}' \|_{\infty} \le C < \infty$ by (\ref{a9}). By the same arguments as the ones that lead to (\ref{la97}) we have 
\begin{equation}
\frac{2}{q} \| (\frac{X'Z}{n} - \Sigma_{xz})(\hat{W}_d - W_d)  \Sigma_{xz}' \|_{\infty} \le 2 C t_3 c_{1n}.\label{la98}
 \end{equation}
 Consider (\ref{la94}). Note that $\|W_d \|_{l_{\infty}} = 1/\min_{1 \le l \le q} \sigma_l^2$. Using the same arguments as in (\ref{la97}) yields
 \begin{equation}
 \frac{1}{q}  \| (\frac{X'Z}{n} - \Sigma_{xz})( W_d) (\frac{Z'X}{n} - \Sigma_{xz}') \|_{\infty} \le (t_3)^2/\min_{1 \le l \le q} \sigma_l^2.\label{la99} 
 \end{equation}
 Consider (\ref{la95}) and (\ref{la96}). By the same analysis as the one that lead to (\ref{la97}) one gets
 \begin{equation}
 \frac{2}{q} \| (\frac{X'Z}{n} - \Sigma_{xz})(W_d) (\Sigma_{xz}') \|_{\infty} \le 2 C t_3/\min_{1 \le l \le q} \sigma_l^2.\label{la910}
  \end{equation}
 \begin{equation}
 \frac{1}{q} \| (\Sigma_{xz})(\hat{W}_d - W_d) (\Sigma_{xz}') \|_{\infty} \le C^2 c_{1n}.\label{la911}
  \end{equation}
 Combine all constants $C, C^2, (\min_{1 \le l \le q} \sigma_l^2)$ as $C$.  Then set via (\ref{la91}) and $\| \Gamma \|_{l_1} \le m_{\Gamma}$
 \[ \mu = m_{\Gamma} [(t_3)^2 c_{1n} + 2 C t_3 c_{1n} + C (t_3)^2  + 2 C t_3 + C c_{1n}].\]
Thus, we have that
 \begin{eqnarray*}
 P [ \| \Gamma \hat{\Sigma} - I _p \|_{\infty}  >   \mu] &\le&
  P [ \| \frac{X'Z}{n} - \Sigma_{xz} \|_{\infty} > t_3]  +P [ \max_{1 \le l \le q} | \hat{\sigma}_l^2 - \sigma_l^2|> c_n] \\
 & \le & \exp(- c \kappa_n) + \frac{K E M_2^{2}}{n\kappa_n} \\
 & + & 9 \exp (- c \kappa_n) + \frac{K [ 2 EM_1^{2} + 4 E M_2^{2} + E M_3^{2} + E M_4^{2} + E M_5^{2}]}{n\kappa_n}\to 0, 
     \end{eqnarray*}
 by (\ref{a8}) and (\ref{a25}) and the comment just above the latter as well as Assumption \ref{2} for the convergence to zero. It remains to be argued that $\mu\to 0$. Using Assumption \ref{2}, we see that by $p \le q$, 
 \[ \frac{\sqrt{E M_2^2}}{n} \ln (pq) = \left(\frac{\sqrt{EM_2^2} \sqrt{\ln pq}}{n^{1/2}}\right)  \frac{\sqrt{\ln pq}}{n^{1/2}} = o(1)  \frac{\sqrt{\ln q}}{n^{1/2}}.\]
Thus, $t_3$ as given in (\ref{t2s}) is $O( \frac{\sqrt{\ln q}}{n^{1/2}})$. Furthermore, $c_n$ as given in (\ref{a25}) is $O (s_0 \frac{\sqrt{\ln q}}{n^{1/2}})$ in Remark 3 of section \ref{8.4}, implying that the same is the case for $c_{1n}$. Therefore,
 \begin{equation}
 \mu = O ( m_{\Gamma} c_{1n}) = O (m_{\Gamma}s_0 \frac{\sqrt{\ln q}}{\sqrt{n}})=o(1).\label{rocmu}
 \end{equation}
where the last assertion is by Assumption \ref{4}.
 \end{proof}

The following lemma combines Lemmas \ref{cl-l2} and \ref{cl-l3}. Since Lemma \ref{cl-l2} conditions on the event $T_{\Gamma} (\mu)$ and Lemma \ref{cl-l3} provides the result that $T_{\Gamma} (\mu)$ happens with probability approaching one, we get Lemma \ref{cl-l4}.

\begin{lemma}\label{cl-l4}
Under Assumptions \ref{1}-\ref{4}, by using the program in section \ref{clime} to get $\hat{\Gamma}$

(i). \[ \max_{1 \le j \le p} \|\hat{\Gamma}_j - \Gamma_j \|_1 = O_p ( (m_{\Gamma} \mu)^{1-f} s_{\Gamma}).\]

(ii).\[ \max_{1 \le j \le p} \|\hat{\Gamma}_j - \Gamma_j \|_2 = O_p ( (m_{\Gamma} \mu)^{1-f} s_{\Gamma}).\]

This result is valid uniformly over ${\cal B}_{l_0} (s_0)$ since $\mu$ depends on $c_{1n}$ which depends on $c_n$, and that depends on $\beta_0, s_0$ by Lemma A.5
.

\end{lemma}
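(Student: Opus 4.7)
The plan is straightforward: Lemma \ref{cl-l4} is a direct consequence of Lemmas \ref{cl-l2} and \ref{cl-l3}. The former gives a deterministic bound on $\|\hat{\Gamma}_j - \Gamma_j\|_1$ and $\|\hat{\Gamma}_j - \Gamma_j\|_2$ on the event $T_{\Gamma}(\mu)$, while the latter shows $P(T_{\Gamma}(\mu)) \to 1$. Combining these yields the desired $O_p$-bounds after accounting for the fact that the bound in Lemma \ref{cl-l2} is uniform in $j$.

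First I would invoke Lemma \ref{cl-l3} to conclude that $T_{\Gamma}(\mu) = \{\|\Gamma\hat{\Sigma} - I_p\|_\infty \le \mu\}$ occurs with probability at least $1 - 10\exp(-C\kappa_n) - K[2EM_1^2 + 5EM_2^2 + EM_3^2 + EM_4^2 + EM_5^2]/(n\kappa_n)$, which tends to one under Assumption \ref{2}. Second, I would recall that on $T_\Gamma(\mu)$, Lemma \ref{cl-l2} yields, for every $j\in\{1,\dots,p\}$,
\begin{equation*}
\|\hat{\Gamma}_j - \Gamma_j\|_1 \le 2c_f (2m_\Gamma \mu)^{1-f} s_\Gamma, \qquad \|\hat{\Gamma}_j - \Gamma_j\|_2 \le 2c_f (2m_\Gamma \mu)^{1-f} s_\Gamma.
\end{equation*}
Crucially, the right-hand sides are \emph{deterministic} and independent of $j$, so taking $\max_{1\le j\le p}$ preserves the same bound. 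Hence on $T_\Gamma(\mu)$ we have $\max_{1\le j\le p}\|\hat{\Gamma}_j - \Gamma_j\|_1 \le 2c_f(2m_\Gamma \mu)^{1-f}s_\Gamma$ and likewise for $\|\cdot\|_2$.

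Third, combining the two: for any $\epsilon > 0$, pick $M > 2c_f 2^{1-f}$; then on $T_\Gamma(\mu)$ we have $\max_{j}\|\hat{\Gamma}_j - \Gamma_j\|_1 \le M (m_\Gamma \mu)^{1-f} s_\Gamma$, and $P(T_\Gamma(\mu)^c) \to 0$ can be made smaller than $\epsilon$ for $n$ large enough. This is exactly the definition of $\max_j \|\hat{\Gamma}_j - \Gamma_j\|_1 = O_p((m_\Gamma\mu)^{1-f}s_\Gamma)$, proving (i); the argument for (ii) is identical.

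Finally, for uniformity over $\mathcal{B}_{l_0}(s_0)$: inspection of the construction shows that $\mu = m_\Gamma\{(t_3)^2 c_{1n} + 2Ct_3 c_{1n} + C(t_3)^2 + 2Ct_3 + Cc_{1n}\}$ depends on $\beta_0$ only through $c_{1n}$ (hence $c_n$), and Lemma \ref{l2} (see the bound $c_n = O(s_0\sqrt{\ln q/n})$ noted in Remark \ref{rem:sigmas}) establishes that this dependence is only through $s_0$, i.e.\ uniform on $\mathcal{B}_{l_0}(s_0)$. Likewise, the probability bound in Lemma \ref{cl-l3} does not depend on $\beta_0$ beyond $s_0$. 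Thus the $O_p$ rate is uniform over $\mathcal{B}_{l_0}(s_0)$. There is no genuine obstacle here; the only minor point to check carefully is that the bound of Lemma \ref{cl-l2} is genuinely uniform in $j$ (which it is, since $m_\Gamma, s_\Gamma, \mu, f$ are all $j$-free).
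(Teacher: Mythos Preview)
Your proposal is correct and follows essentially the same approach as the paper: the paper states just before Lemma \ref{cl-l4} that it ``combines Lemmas \ref{cl-l2} and \ref{cl-l3}. Since Lemma \ref{cl-l2} conditions on the event $T_{\Gamma}(\mu)$ and Lemma \ref{cl-l3} provides the result that $T_{\Gamma}(\mu)$ happens with probability approaching one, we get Lemma \ref{cl-l4}.'' Your write-up is in fact more detailed than the paper's own justification, and your check that the bound in Lemma \ref{cl-l2} is uniform in $j$ (and the uniformity argument over $\mathcal{B}_{l_0}(s_0)$) is exactly right.
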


Note that the approximation errors in Lemma \ref{cl-l4} will converge in probability to zero by Assumption \ref{5cl}, and this will be seen in the proof of Theorem \ref{thmcl1}.

\subsection{Proof of Theorem \ref{thmcl1}}

\begin{proof}[Proof of Theorem \ref{thmcl1}]

We prove that $t_{W_d}$ is asymptotically standard normal. This will be done in case of a diagonal weight $W_d$. The case of general symmetric positive definite weight will be discussed afterwards. We divide the proof into several steps. 
 
First, decompose $t_{W_d}$:
\[ t_{W_d}= t_{W_{d1}} + t_{W_{d2}},\]
where 
\[ t_{W_{d1}} = \frac{e_j' \hat{\Gamma} \left( \frac{X'Z}{n} \frac{\hat{W}_d}{q}  \frac{Z'u}{n^{1/2}} \right)}{\sqrt{e_j' \hat{\Gamma} \hat{V}_d \hat{\Gamma} e_j}}.\]
\[ t_{W_{d2}} = -\frac{ e_j' \Delta}{\sqrt{e_j' \hat{\Gamma} \hat{V}_d \hat{\Gamma} e_j}}.\]

{\bf Step 1}. 

In the first step, we introduce an infeasible $t_{W_d^*}$ (it is infeasible since since $V_d = q^{-2} V_1$) and show that it is asymptotically standard normal.
\[ t_{W_{d1}^*} = \frac{e_j' \Gamma \Sigma_{x z}  \frac{W_d}{q}  Z' u /n^{1/2}}{\sqrt{e_j'  \Gamma V_d \Gamma e_j}}
= \frac{e_j' \Gamma \Sigma_{x z}  W_d  Z' u /n^{1/2}}{\sqrt{e_j'  \Gamma V_1 \Gamma e_j}}\]
where
\[
V_1 =  \Sigma_{x z}  W_d \Sigma_{z u} W_d  \Sigma_{x z}'.\]
Recall also that 
$\Sigma_{zu} =  E Z_i  Z_i' u_i^2,$ 
and 
$ \Sigma_{x z} = E X_i Z_i'.$

To establish that $t_{W_d^*}$ is standard normal, we verify the conditions for Lyapounov's central limit theorem. First, note that
\[ E \left[ \frac{e_j' \Gamma \Sigma_{x z} W_d \sum_{i=1}^n  Z_i  u_i /n^{1/2}}{\sqrt{e_j' \Gamma V_1 \Gamma e_j}} \right] = 0\]
since $E Z_i u_i = 0$ by exogeneity of the instruments. Next, 
\[ E \left[ \frac{e_j' \Gamma \Sigma_{x z} W_d \sum_{i=1}^n  Z_i  u_i /n^{1/2}}{\sqrt{e_j' \Gamma V_1 \Gamma e_j}} \right]^2 = 1,\]
where we used that
\[ E \left[ \frac{\sum_{i=1}^n Z_i u_i}{n^{1/2}}\right]\left[ \frac{\sum_{i=1}^n  Z_i u_i}{n^{1/2}}\right]'=  E  Z_i Z_i' u_i^2= \Sigma_{zu},\]
and $V_1$ definition above.
Next, we want to show 
\[ \frac{1}{(e_j' \Gamma V_1 \Gamma' e_j)^{r_u/4}} \sum_{i=1}^n E | e_j' \Gamma \Sigma_{x z}  W_d Z_i u_i /n^{1/2}|^{r_u/2} \to 0.\]
First, since $\Gamma_j$ is the $j$th row vector in $\Gamma$ in section \ref{clime}, with 
$\| \Gamma \|_{l_1} \le m_{\Gamma}$, and $\Gamma$ being symmetric we get
\begin{equation}
\max_{1 \le j \le p} \| \Gamma e_j\|_1= \max_{1 \le j \le p}  \|e_j' \Gamma\|_1 = \| \Gamma \|_{l_{\infty}} = \| \Gamma \|_{l_1} \le m_{\Gamma}.\label{tcl1}
 \end{equation}
\noindent We see that for every $i\in\cbr[0]{1,...,n}$ 
\begin{eqnarray}
 E | e_j' \Gamma \Sigma_{x z}  W_d  Z_i u_i / n^{1/2}|^{r_u/2} &  
\le & 
 E \left[  \| e_j' \Gamma  \|_1 \|\Sigma_{xz} W_d  \frac{Z_i u_i}{n^{1/2}} \|_{\infty} 
  \right]^{r_u/2} \nonumber \\
& \le 
&  E \{ [ m_{\Gamma}  \| \Sigma_{xz} W_d\|_{\infty} \| \frac{Z_i u_i }{\sqrt{n}} ]\|_{1}  \}^{r_u/2}
 \label{eqW} \\ 
 & \le 
&[ m_{\Gamma} \| \Sigma_{xz} \|_{\infty} \| W_d \|_{\infty}/\sqrt{n}  ]^{r_u/2}   E \sbr[3]{\sum_{l=1}^q|Z_{il}u_i| }^{r_u/2} \\
   & 
\le 
& O \left [  \left(\frac{ m_{\Gamma} }{n^{1/2}} \right)^{r_u/2} \right] \frac{q^{r_u/2}}{[\min_{1 \le l \le q} \sigma_l^2]^{r_u/2}}  \max_{1 \le l \le q } E | Z_{il} u_i |^{r_u/2} \nonumber \\
  & 
=
 & O \left(  \frac{m_{\Gamma}^{r_u/2} q^{r_u/2}  }{n^{r_u/4}}
  \right), \nonumber 
  \end{eqnarray}
where we used Hölder's inequality for the first inequality, and Jensen's inequality as well as $W_d$ being diagonal for the third. For the other ones we used Assumption \ref{1}, (\ref{a9}) and $\min_{1 \le l \le q} \sigma_l^2 >0$ with $\max_{1 \le l \le q} E | Z_{1l} u_1 |^{r_u/2}\le C < \infty$ by the Cauchy-Schwarz inequality with $r_u,r_z > 8$.
Therefore,
\begin{equation}
\sum_{i=1}^n E |e_j' \Gamma \Sigma_{x z} W_d  Z_i u_i /n^{1/2}|^{r_u/2} = O \left( \frac{m_{\Gamma}^{r_u/2} q^{r_u/2} }{n^{r_u/4-1}}\right).\label{num1}
\end{equation}
Next recalling that $V_d  q^2 =  V_1$ we get
\begin{eqnarray}
 [e_j' \Gamma V_1 \Gamma e_j ]^{r_u/4}&\ge& [Eigmin(V_1) \| \Gamma e_j \|_2^2]^{r_u/4}  \ge [Eigmin (V_1) Eigmin ( \Gamma)^2 \|e_j \|_2^2]^{r_u/4}
 \nonumber \\
&= & q^{r_u/2} [Eigmin (V_d) \frac{1}{Eigmax (\Sigma)^2}]^{r_u/4} > 0,\label{den1}
\end{eqnarray}
where by Assumption \ref{5cl}(ii), we have that $Eigmin (V_d)$ is bounded away from zero and $Eigmax (\Sigma)$ is bounded from above. Thus, by dividing (\ref{num1}) with (\ref{den1}), the Lyapounov conditions are seen to be satisfied by Assumption \ref{5cl}(i). Therefore, $t_{W_{d1}^*} \stackrel{d}{\to} N(0,1)$. 

%Also note that in this proof diagonal $W_d$ played only a role in (\ref{eqW}). In case of symmetric-positive definite $W$, we need  $\|W\|_{l_{\infty}} = O(1)$. NOT COORRECT, SINCE W_d ALSO ENTERS THE DEFINTION OF V_1 WHICH ENTERS THE ABOVE DISPLAY. $q^2$ may no longer be the right scaling.

{\bf Step 2}.
Here we show $t_{W_{d1}} - t_{W_{d1}^*} = o_p (1)$. 

We do so by showing that the numerators as well as denominators of $t_{W_{d1}} - t_{W_{d1}^*}$ are asymptotically equivalent and by arguing that the denominators are bounded away from 0 in probability.

Step 2a). Regarding the numerators note that

\begin{eqnarray}
&&| e_j' \hat{\Gamma}\left( \frac{X'Z}{n} \right) \frac{\hat{W}_d}{q}  \frac{Z'u}{n^{1/2}}  - e_j' \Gamma \Sigma_{xz} \frac{W_d}{q} \frac{Z'u}{n^{1/2}}| \nonumber \\
& \le  & |e_j' \hat{\Gamma} \left(\frac{X'Z}{n} \right) \frac{\hat{W}_d}{q}  \frac{Z'u}{n^{1/2}} - e_j' \Gamma \left( \frac{X'Z}{n} \right) \frac{\hat{W}_d}{q}  \frac{Z'u}{n^{1/2}}| \label{t21} \\
& + & |e_j' \Gamma \left( \frac{X'Z}{n} \right) \frac{\hat{W}_d}{q}  \frac{Z'u}{n^{1/2}} - e_j' \Gamma \Sigma_{xz} \frac{\hat{W}_d}{q}  \frac{Z'u}{n^{1/2}}| \label{t22} \\
& + & | e_j' \Gamma \Sigma_{xz}  \frac{\hat{W}_d}{q}  \frac{Z'u}{n^{1/2}} - e_j' \Gamma \Sigma_{xz} \frac{W_d}{q} \frac{Z'u}{n^{1/2}}| \label{t23}
\end{eqnarray}

Start with (\ref{t21}). By Hölder's inequality
\begin{equation}
|e_j' \hat{\Gamma} \left(\frac{X'Z}{n} \right) \frac{\hat{W}_d}{q}  \frac{Z'u}{n^{1/2}} - e_j' \Gamma \left( \frac{X'Z}{n} \right) \frac{\hat{W}_d}{q}  \frac{Z'u}{n^{1/2}}|
\le \left[ \| e_j' (\hat{\Gamma} - \Gamma )\|_1  \right] \left[ \| \frac{X'Z}{n} \frac{\hat{W}_d}{q} \frac{Z'u}{n^{1/2}} \|_{\infty} \right].\label{t24}
\end{equation}
Next,

\begin{eqnarray}
\| \frac{X'Z}{n} \frac{\hat{W}_d}{q} \frac{Z'u}{n^{1/2}} \|_{\infty} &\le& [ \max_{1 \le j \le p} \max_{1 \le l \le q} \frac{\sum_{i=1}^n |X_{ij} Z_{il}|}{n} ] \|  \hat{W}_d \frac{Z'u}{n^{1/2}} \|_{\infty} \nonumber \\
& \le & [ \max_{1 \le j \le p} \max_{1 \le l \le q} \frac{\sum_{i=1}^n |X_{ij} Z_{il}|}{n} ] \left( \frac{1}{\min_{1 \le l \le q} \hat{\sigma}_l^2} \right) \| \frac{Z'u}{n^{1/2}} \|_{\infty},\label{t25}
\end{eqnarray}
where we used (\ref{ineq2}) for the first inequality and for the second inequality we used (\ref{ineq3})
%Example 5.6.5 (page 345) of \cite{hj13} 
and $\|\hat{W}_d \|_{l_{\infty}} = \frac{1}{\min_{1 \le l \le q} \hat{\sigma}_l^2}$.
Use (\ref{t25}) in (\ref{t24}) to get
\begin{eqnarray}
&&|e_j' \hat{\Gamma} \left(\frac{X'Z}{n} \right) \frac{\hat{W}_d}{q}  \frac{Z'u}{n^{1/2}} - e_j' \Gamma \left( \frac{X'Z}{n} \right) \frac{\hat{W}_d}{q}  \frac{Z'u}{n^{1/2}}| \nonumber \\
&\le & \left[ \| e_j' (\hat{\Gamma} - \Gamma )\|_1  \right] [ \max_{1 \le j \le p} \max_{1 \le l \le q} \frac{\sum_{i=1}^n |X_{ij} Z_{il}|}{n} ] \left( \frac{1}{\min_{1 \le l \le q} \hat{\sigma}_l^2} \right) \| \frac{Z'u}{n^{1/2}} \|_{\infty} \nonumber \\
& = & O_p \left(  s_{\Gamma} (m_{\Gamma} \mu)^{1-f} \right) \left[  O_p( 1) \right] O_p (1) \left[ \sqrt{n}  O_p \left( \frac{\sqrt{\ln q}}{n^{1/2}} \right) 
\right]\nonumber \\
& = & O_p \left( s_{\Gamma} (m_{\Gamma} \mu)^{1-f} \sqrt{\ln q} \right) = o_p (1),\label{t26}
\end{eqnarray}
where in the first equality we use Lemma \ref{cl-l4} for the first term on the right side, Lemma \ref{bound}(iv) for the second term, Lemma \ref{l2} for the third term and Lemma \ref{bound}(ii) for the fourth term. The last equality follows by Assumption \ref{5cl}(i). Regarding (\ref{t22}) note first that

\begin{eqnarray}
&&|e_j' \Gamma \left( \frac{X'Z}{n} \right) \frac{\hat{W}_d}{q}  \frac{Z'u}{n^{1/2}} - e_j' \Gamma \Sigma_{xz} \frac{\hat{W}_d}{q}  \frac{Z'u}{n^{1/2}}| \nonumber \\
& \le & \| e_j' \Gamma \|_1 \| (\frac{X'Z}{n} - \Sigma_{xz}) \frac{\hat{W}_d}{q} \frac{Z'u}{n^{1/2}} \|_{\infty}  \nonumber \\
&\le & \|\Gamma_j \|_1  \left[ \max_{1 \le j \le p} \max_{1 \le l \le q } \frac{1}{n} \sum_{i=1}^n |X_{ij} Z_{il} - E X_{ij} Z_{il}| \right] \|\hat{W}_d \frac{Z'u}{n^{1/2}} \|_{\infty} \nonumber \\
& \le &  \|\Gamma_j \|_1  \left[ \max_{1 \le j \le p} \max_{1 \le l \le q } \frac{1}{n} \sum_{i=1}^n |X_{ij} Z_{il} - E X_{ij} Z_{il}| \right] \|\hat{W}_d \|_{l_{\infty}} \| \frac{Z'u}{n^{1/2}} \|_{\infty} \nonumber \\
& = & \|\Gamma_j \|_1  \left[ \max_{1 \le j \le p} \max_{1 \le l \le q } \frac{1}{n} \sum_{i=1}^n |X_{ij} Z_{il} - E X_{ij} Z_{il}| \right] \frac{1}{\min_{1 \le l \le q} \hat{\sigma}_l^2} \| \frac{Z'u}{n^{1/2}} \|_{\infty},
\label{t27}
\end{eqnarray}
where we used Hölder's inequality for the first inequality, (\ref{ineq2}) for the second inequality and for third inequality we used (\ref{ineq3})
%Example 5.6.5 of \cite{hj13}.  
Observe that by Assumption \ref{4}(i)
$\max_{1 \le j \le p} \|\Gamma_j \|_1 = O (m_{\Gamma} )$ and by Lemma \ref{bound}(iv), and $p \le q$ such that $\ln (pq) \le 2 \ln q$ and so
\[ \max_{1 \le j \le p} \max_{1 \le l \le q} \frac{1}{n} \sum_{i=1}^n |X_{ij} Z_{il} - E X_{ij} Z_{il} | = O_p \left( \frac{\sqrt{\ln q}}{n^{1/2}}\right ).\]
By Lemma \ref{l2} and \ref{bound}(ii) we have 
\[ \frac{1}{\min_{1 \le l \le q} \hat{\sigma}_l^2} \| \frac{Z'u}{n^{1/2}} \|_{\infty} = \sqrt{n} O_p (1) O_p ( \frac{\sqrt{\ln q}}{n^{1/2}}) = O_p \left( \sqrt{\ln q}\right).\]
Using the above two displays in (\ref{t27}) yields 

\begin{eqnarray}
&&|e_j' \Gamma \left( \frac{X'Z}{n} \right) \frac{\hat{W}_d}{q}  \frac{Z'u}{n^{1/2}} - e_j' \Gamma \Sigma_{xz} \frac{\hat{W}_d}{q}  \frac{Z'u}{n^{1/2}}| \nonumber \\
& = & O (m_{\Gamma}) O_p \left( \frac{\sqrt{\ln q}}{n^{1/2}} \right)   O_p ( \sqrt{\ln q}) \nonumber \\
& = & O_p \left( \frac{m_{\Gamma} \ln q}{n^{1/2}}\right) = o_p (1),\label{t28}
\end{eqnarray}
by Assumption \ref{5cl}(i). Now consider (\ref{t23}):
\begin{eqnarray}
&& | e_j' \Gamma \Sigma_{xz}  \frac{\hat{W}_d}{q}  \frac{Z'u}{n^{1/2}} - e_j' \Gamma \Sigma_{xz} \frac{W_d}{q} \frac{Z'u}{n^{1/2}}| \nonumber \\
& \le&  
\| e_j' \Gamma \|_1 \| \Sigma_{xz} \frac{(\hat{W}_d - W_d)}{q} \frac{Z'u}{n^{1/2}} \|_{\infty} \nonumber \\
& \le & \| \Gamma_j \|_1  [ \max_{1 \le j \le p} \max_{1 \le l \le q} \frac{1}{n} \sum_{i=1}^n E | X_{ij} Z_{il}|] \|(\hat{W}_d - W_d) \frac{Z'u}{n^{1/2}} \|_{\infty} \nonumber \\
& \le & \| \Gamma_j \|_1  [ \max_{1 \le j \le p} \max_{1 \le l \le q} \frac{1}{n} \sum_{i=1}^n E | X_{ij} Z_{il}|] \|(\hat{W}_d - W_d) \|_{l_{\infty}} \| \frac{Z'u}{n^{1/2}} \|_{\infty} \nonumber \\
& = & \| \Gamma_j \|_1  [ \max_{1 \le j \le p} \max_{1 \le l \le q} \frac{1}{n} \sum_{i=1}^n E | X_{ij} Z_{il}|] [ \max_{1 \le l \le q} \frac{1}{|\hat{\sigma}_l^2 - \sigma_l^2|}] \| \frac{Z'u}{n^{1/2}} \|_{\infty} \nonumber \\
&= & O(m_{\Gamma}) O(1) O_p \left( \frac{ \sqrt{\ln q}  s_0}{\sqrt{n}}\right) [ n^{1/2} O_p ( \frac{\sqrt{\ln q}}{n^{1/2}})] \nonumber \\
& = & O_p \left( \frac{m_{\Gamma}  s_0 \ln q }{n^{1/2}} \right) = o_p (1),
\label{t29}
\end{eqnarray}

\noindent where we use Hölder's inequality for the first inequality, (\ref{ineq2}) for the second and for third inequality we used  (\ref{ineq3}).
%Example 5.6.5 of \cite{hj13}. 
Assumption \ref{1}, \ref{4}(i), Lemma \ref{bound} as well as the following display are used as well. The last equality is obtained by Assumption \ref{5cl}(i).
In (\ref{t29}) we used that
\begin{eqnarray}
\max_{1 \le l \le q} \left| \frac{1}{\hat{\sigma}_l^2} - \frac{1}{\sigma_l^2} \right| & = & \max_{1 \le l \le q} \left| \frac{\sigma_l^2 - \hat{\sigma}_l^2}{\hat{\sigma}_l^2 \sigma_l^2} \right| \nonumber \\
& \le & \frac{\max_{1 \le l \le q} | \hat{\sigma}_l^2 - \sigma_l^2|}{\min_{1 \le l \le q} \hat{\sigma}_l^2 \min_{1 \le l \le q} \sigma_l^2} \nonumber \\
& = & O_p \left(\frac{\sqrt{\ln q} s_0}{n^{1/2}}\right),\label{t210}
\end{eqnarray}
where we obtained the rate in the last equality from Lemma \ref{rem:sigmas}, Remark \ref{rem:sigmas}. Note that the result is uniform over ${\cal B}_{l_0} (s_0)$ by Lemma \ref{rem:sigmas}, Lemma \ref{cl-l3}, \ref{cl-l4}, and the step 2a proof here.

%To have symmetric-positive definite weights $\hat{W}$ instead of diagonal weights $\hat{W}_d$ in step 2a, we need  by (\ref{t25})(\ref{t27}), $ \| \hat{W}
%\|_{l_{\infty}} = O_p (1)$, by (\ref{t29}) we need $\| \hat{W}- W  \|_{l_{\infty}} = o_p (1)$. 

Step 2b). Here we start analyzing the denominator of $t_{W_{d1}}$. As an intermediate step define the infeasible estimator $\tilde{V}_d = 
(\frac{X'Z}{n} \frac{\hat{W}_d}{q} \tilde{\Sigma}_{zu} \frac{\hat{W}_d}{q} \frac{Z'X}{n})$ of $V_d$, where $\tilde{\Sigma}_{zu} = \frac{1}{n} \sum_{i=1}^n
Z_i Z_i' u_i^2$.
We show that
\begin{equation}
 |e_j' \hat{\Gamma} \hat{V}_d \hat{\Gamma}' e_j - e_j' \Gamma V_d \Gamma' e_j | = o_p (1).\label{t210a}
 \end{equation}
To this end, consider the following three terms:
\begin{equation}
|e_j' \hat{\Gamma} \hat{V}_d  \hat{\Gamma}' e_j - e_j' \hat{\Gamma} \tilde{V}_d  \hat{\Gamma}' e_j |.\label{t211} 
\end{equation} 
\begin{equation}
|e_j' \hat{\Gamma} \tilde{V}_d  \hat{\Gamma}' e_j - e_j' \hat{\Gamma} V_d  \hat{\Gamma}' e_j |.\label{t212} 
\end{equation} 
\begin{equation}
|e_j' \hat{\Gamma} V_d  \hat{\Gamma}' e_j - e_j' \Gamma V_d  \Gamma' e_j |.\label{t213} 
\end{equation}
To establish (\ref{t210a}) we show that the above three terms tend to zero in probability. We start with (\ref{t211}). Use Hölder's inequality twice to get

\begin{equation}
|e_j' \hat{\Gamma} \hat{V}_d  \hat{\Gamma}' e_j - e_j' \hat{\Gamma} \tilde{V}_d  \hat{\Gamma}' e_j | \le \| \hat{V}_d - \tilde{V}_d \|_{\infty} \|\hat{\Gamma}' e_j \|_1^2
.\label{t212c}
\end{equation}

\noindent Then, in (\ref{t212c}), by the definition of $\hat{V}_d$ and $\tilde{V}_d$
\begin{eqnarray}
 \| \hat{V}_d - \tilde{V}_d \|_{\infty}  &= & \| \left( \frac{X'Z}{n} \right) \frac{\hat{W}_d}{q} \hat{\Sigma}_{zu} \frac{\hat{W}_d}{q} \left(  \frac{Z'X}{n}
 \right) - \left( \frac{X'Z}{n} \right) \frac{\hat{W}_d}{q} \tilde{\Sigma}_{zu} \frac{\hat{W}_d}{q} \left(  \frac{Z'X}{n}
 \right)\|_{\infty} \nonumber \\
 & \le & [\| \left( \frac{X'Z}{n} \right) \frac{\hat{W}_d}{q} \|_{l_{\infty}}]^2 \| \| \hat{\Sigma}_{zu} - \tilde{\Sigma}_{zu}\|_{\infty}, \label{t213a}  \end{eqnarray}
where we used Lemma \ref{lineq} (iii). Now
 \begin{equation}
 \| \left( \frac{X'Z}{n} \right) \frac{\hat{W}_d }{q} \|_{l_{\infty}} \le   \| \left( \frac{X'Z}{n} \right) \|_{\infty}  \|\hat{W}_d \|_{l_1},\label{t214}
 \end{equation}
 by Lemma \ref{lineq}(iv). Furthermore,
 \begin{equation}
 \| \left( \frac{X'Z}{n} \right) \|_{\infty} = O_p(1),\label{t216}
 \end{equation}
 by Lemma \ref{bound}(iv). Next by Lemma \ref{l2} and $\hat{W}_d$ being diagonal
 \begin{equation}
 \| \hat{W}_d \|_{l_1} =\| \hat{W}_d \|_{\infty}= O_p (1).\label{t218}
 \end{equation}

\noindent Now insert (\ref{t216}) and (\ref{t218}) into (\ref{t214}) to conclude
 \begin{equation}
 \| \left( \frac{X'Z}{n} \right) \frac{\hat{W}_d}{q} \|_{l_{\infty}} = O_p (1).\label{t220}
 \end{equation}
Recalling that $\hat{u}_i = u_i - X_i' (\hat{\beta}_F - \beta_0)$ one gets
 \begin{equation}
 \hat{\Sigma}_{zu} - \tilde{\Sigma}_{zu} = - \frac{2}{n} \sum_{i=1}^n Z_i Z_i' u_i X_i' (\hat{\beta}_F - \beta_0)
 + \frac{1}{n} \sum_{i=1}^n Z_i Z_i' (\hat{\beta}_F - \beta_0)' X_i X_i' (\hat{\beta}_F - \beta_0).\label{t221}
 \end{equation}
 Consider  the first term on the right side of (\ref{t221}):
 {\small \begin{eqnarray}
\max_{1 \le i \le n}  \max_{1 \le l \le q} \max_{1 \le m \le q} \left| \frac{2}{n} \sum_{i=1}^n Z_{il} Z_{im} u_i X_i' (\hat{\beta}_F - \beta_0) \right| & \le & 
 [ \max_{1 \le i \le n} \max_{1 \le l \le q} \max_{1 \le m \le q} |Z_{il} Z_{im}| ] \left| \frac{2}{n} \sum_{i=1}^n u_i X_i' (\hat{\beta}_F - \beta_0)
 \right| \nonumber \\
 & \le & 2 \max_{1 \le i \le n} \max_{1 \le l \le q} \max_{1 \le m \le q} | Z_{il} Z_{im}|  \|  \frac{u'X}{n}\|_{\infty} \|\hat{\beta}_F - \beta_0 \|_1.\label{t222}
 \end{eqnarray}}
Next, by Markov's inequality and via Lemma A.3 of \cite{ck18} which requires $\max_{1 \le l \le q} E | Z_{il}|^{r_z} \le C < \infty$,
 \[ P \left(\max_{1 \le i \le n}  \max_{1 \le l \le q} \max_{1 \le m \le q} | Z_{il} Z_{im} | > t_7 
 \right) \le \frac{n q^2 C }{t_7^{r_z/2}},\]
 where $t_7= M q^{4/r_z} n^{2/r_z}$ for $r_z>12$. This shows that for a large positive constant $M >0$
 \begin{equation}
\max_{1 \le i \le n}  \max_{1 \le l \le q} \max_{1 \le m \le q} | Z_{il} Z_{im} | = O_p ( q^{4/r_z} n^{2/r_z}).\label{t223}
  \end{equation}
Next, by Lemmas \ref{su2}-\ref{su3} and Assumption \ref{5cl}(iii)
\[
 P \left( \max_{1 \le j \le p} \left| n^{-1} \sum_{i=1}^n X_{ij} u_i - E X_{ij} u_i 
 \right| > t_8 \right) \le \exp(- C \kappa_n) + \frac{ E M_6^{2}}{n \kappa_n} = o(1),
 \]
 where $t_8= O(\frac{\sqrt{\ln p}}{n^{1/2}})$.
 Next, by Assumption \ref{1} and the Cauchy-Schwarz inequality
 \[ \max_{1 \le j \le p} \left|  E X_{ij} u_i \right| = O(1).\]
Combining the above two displays gives
\begin{equation}
\| \frac{1}{n} u'X \|_{\infty} = O(1) + O_p ( \frac{\sqrt{\ln p}}{n^{1/2}}) = O(1) + o_p (1),\label{t224}
\end{equation}
where the $o_p(1)$ term is obtained by Assumption \ref{2}. Now use (\ref{t223}) and (\ref{t224}) in (\ref{t222}) together with Lemma \ref{l1}(ii) with Assumption 2 to get $\lambda_n = O (\sqrt{lnq}/\sqrt{n})$,
\begin{eqnarray}
\left\| \frac{2}{n} \sum_{i=1}^n Z_i Z_i' u_i X_i' (\hat{\beta}_F - \beta_0) \right\|_{\infty} & \le & 
O_p ( q^{4/r_z} n^{2/r_z}) [ O(1) + o_p (1) ] O_p ( \frac{\sqrt{lnq} s_0}{n^{1/2}}) \nonumber \\
& = & O_p (\frac{s_0 (\sqrt{lnq}) q^{4/r_z}  n^{2/r_z}}{n^{1/2}} ) = o_p (1),\label{t225}
\end{eqnarray}
where Assumption \ref{5cl}(iv), $  m_{\Gamma}^2 s_0 q^{4/r_z} n^{2/r_z} \sqrt{lnq}/n^{1/2} = o(1)$ implies  the last equality. Now analyze the following 
in (\ref{t221})

\begin{eqnarray*}
\left\| \frac{1}{n} \sum_{i=1}^n Z_i Z_i' (\hat{\beta}_F - \beta_0)' X_i X_i' (\hat{\beta}_F - \beta_0) \right\|_{\infty} & \le & 
\max_{1 \le i \le n} \max_{1 \le l \le q} \max_{1 \le m \le q} | Z_{il} Z_{im}| (\hat{\beta}_F - \beta_0)' [ \frac{1}{n} \sum_{i=1}^n X_i X_i'] (\hat{\beta}_F - \beta_0) \\
& \le & \max_{1 \le i \le n} 
\max_{1 \le l \le q} \max_{1 \le m \le q} | Z_{il} Z_{im}| \| \frac{X'X}{n} \|_{\infty} \|\hat{\beta}_F - \beta_0 \|_1^2,
\end{eqnarray*}
where we used Hölder's inequality twice for the last estimate. 

\noindent By Lemmas \ref{su2} and \ref{su3}
\[ \| \frac{1}{n} \sum_{i=1}^n [X_i X_i' - E X_i X_i'] \|_{\infty} = O_p ( \frac{\sqrt{\ln p}}{n^{1/2}}).\]
and by the Cauchy-Schwarz inequality with $r_x\ge 6$ bounded moments.
\[ \| E X_i X_i' \|_{\infty}  =  O(1).\]
By Assumption 2 the previous two displays imply
\[ \| \frac{X'X}{n} \|_{\infty} = O (1) + O_p ( \frac{\sqrt{\ln p}}{n^{1/2}}) = O(1) + o_p (1).\]
Then, using (\ref{t223}), the above display and Lemma \ref{l1}
\begin{eqnarray}
\left| \frac{1}{n} \sum_{i=1}^n Z_i Z_i' (\hat{\beta}_F - \beta_0)' X_i X_i' (\hat{\beta} - \beta_0) \right| & \le & 
O_p ( q^{4/r_z} n^{2/r_z}) [ O(1) + o_p (1)] O_p \left( \frac{\ln q s_0^2}{n} \right) \nonumber \\
& = & O_p ( \{\frac{\sqrt{\ln q} s_0 n^{1/r_z} q^{2/r_z}}{n^{1/2}} \}^2) \nonumber \\
&=& o_p (1),\label{t226}
\end{eqnarray}
by Assumption \ref{5cl}(iv).
Using (\ref{t226}) and (\ref{t225}) in (\ref{t221}) thus gives
\begin{equation}
\| \hat{\Sigma}_{zu} - \tilde{\Sigma}_{zu} \|_{\infty}  = O_p ( \frac{q^{4/r_z} s_0 n^{2/r_z} \sqrt{\ln q}}{n^{1/2}}) = o_p (1).\label{t227}
\end{equation}
Insert (\ref{t220}) and (\ref{t227}) into (\ref{t213a}) to get 
\begin{equation}
\| \hat{V}_d - \tilde{V}_d \|_{\infty} = O_p ( \frac{q^{4/r_z} s_0 n^{2/r_z} \sqrt{\ln q}}{n^{1/2}})=o_p(1).\label{t228}
\end{equation}
By the definition of the CLIME program one has for $j=1,..., p$
\[ \| \hat{\Gamma}_j \|_1 \le \| \Gamma_j \|_1.\]
By $\Gamma \in U (m_{\Gamma}, f, s_{\Gamma})$ being symmetric and Lemma \ref{cl-l3} one has with probability approaching one
\begin{equation}
\max_{1 \le j \le p} \| \hat{\Gamma}_j \|_1 = \| \hat{\Gamma}\|_{l_{\infty}} \le \| \Gamma \|_{l_{\infty}} = \| \Gamma\|_{l_1}
\le m_{\Gamma}.\label{gel1}
\end{equation}
Next, use (\ref{t228}) and (\ref{gel1}) in (\ref{t212c}) to bound (\ref{t211}), by Assumption \ref{5cl}(iv)
\begin{eqnarray}
| e_j' \hat{\Gamma} \hat{V}_d \hat{\Gamma}' e_j - e_j' \hat{\Gamma} \tilde{V}_d \hat{\Gamma}' e_j | & \le  & 
O_p ( m_{\Gamma}^2) O_p ( \frac{q^{4/r_z} s_0 n^{2/r_z} \sqrt{\ln q}}{n^{1/2}}) \nonumber \\
& =& O_p ( \frac{m_{\Gamma}^2 s_0 q^{4/r_z} n^{2/r_z} \sqrt{\ln q}}{n^{1/2}})= o_p (1).\label{t229}
\end{eqnarray}

We now turn to (\ref{t212}) and note first that

\begin{eqnarray}
&& |e_j' \hat{\Gamma} \tilde{V}_d  \hat{\Gamma}' e_j - e_j' \hat{\Gamma} V_d  \hat{\Gamma}' e_j | \nonumber \\
& \le & |e_j' \hat{\Gamma} \tilde{V}_d \hat{\Gamma}' e_j - e_j' \hat{\Gamma} \bar{V}_d \hat{\Gamma}' e_j | \label{t230} \\
& + & | e_j' \hat{\Gamma} \bar{V}_d \hat{\Gamma}' e_j -  e_j' \hat{\Gamma} V_d \hat{\Gamma}' e_j | \label{t231},
\end{eqnarray} 
where 
\[ \bar{V}_d = ( \frac{X'Z}{n} \frac{\hat{W}_d}{q} \Sigma_{zu}  \frac{\hat{W}_d}{q} \frac{Z'X}{n} ).\]

We bound (\ref{t230}) and (\ref{t231}) separately. Start with (\ref{t230}).
\begin{equation}
 |e_j' \hat{\Gamma} \tilde{V}_d \hat{\Gamma}' e_j - e_j' \hat{\Gamma} \bar{V}_d \hat{\Gamma}' e_j |  \le \|e_j' \hat{\Gamma} \|_1^2 
 \| \tilde{V}_d - \bar{V}_d \|_{\infty}.\label{t232}
\end{equation}

Then by Lemma \ref{lineq}(iii) 
\begin{equation}
\| \tilde{V}_d - \bar{V}_d \|_{\infty}  \le [ \|\frac{X'Z}{n} \frac{\hat{W}_d}{q}  \|_{l_{\infty}} ]^2\| 
\| \tilde{\Sigma}_{zu} - \Sigma_{zu} \|_{\infty}.\label{t233}
\end{equation}

By Lemmas \ref{su2} and \ref{su3} 
\[
P \left(  \max_{1 \le l \le q} \max_{1 \le m \le q}  \left| n^{-1} \sum_{i=1}^n Z_{il} Z_{im} u_i^2 - E Z_{il} Z_{im} u_i^2 \right| >t_9 
\right) \le \exp (- C \kappa_n) + \frac{K E M_7^{2}}{n \kappa_n} = o(1) ,
\]
for a $t_9 =  O ( \frac{\sqrt{\ln q}}{n^{1/2}})$ via Assumption \ref{5cl}(iii).
Thus, 
 \begin{equation}
 \| \tilde{\Sigma}_{zu} - \Sigma_{zu} \|_{\infty} = O_p ( \frac{\sqrt{\ln q}}{n^{1/2}}).\label{t234}
 \end{equation}
 Then insert (\ref{t220}) and (\ref{t234}) in (\ref{t233}) to get that
 \begin{equation}
 \| \tilde{V}_d  - \bar{V}_d \|_{\infty} = O_p ( \frac{\sqrt{\ln q}}{n^{1/2}}) .\label{t235}
 \end{equation}
 Use (\ref{t235}) in (\ref{t232}) together with (\ref{gel1})
 \begin{eqnarray}
 |e_j' \hat{\Gamma} \tilde{V}_d \hat{\Gamma}' e_j - e_j' \hat{\Gamma} \bar{V}_d \hat{\Gamma}' e_j | =  
 O_p (m_{\Gamma}^2) O_p ( \frac{\sqrt{\ln q}}{n^{1/2}}) 
 =
 o_p (1),\label{t236}
 \end{eqnarray}
 by Assumption \ref{5cl}(iv).

We now turn to (\ref{t231}) and begin by noting that
 \begin{equation}
 | e_j' \hat{\Gamma} \bar{V}_d \hat{\Gamma}' e_j - e_j' \hat{\Gamma} V_d \hat{\Gamma}' e_j |
 \le \|e_j' \hat{\Gamma} \|_1^2 \| \bar{V}_d - V_d \|_{\infty}.\label{t237}
 \end{equation}
Next, by definition of $\bar{V}_d$ and $V_d$, and addition and subtraction
 \begin{eqnarray}
 \bar{V}_d - V_d & = & \left( \frac{X'Z}{n} \right) \frac{\hat{W}_d}{q}  \Sigma_{zu}  \frac{\hat{W}_d}{q} \left( \frac{Z'X}{n} \right) 
 - \Sigma_{xz} \frac{W_d}{q} \Sigma_{zu} \frac{W_d}{q} \Sigma_{xz}' \nonumber \\
 & = & \left[\left( \frac{X'Z}{n}  \frac{\hat{W}_d}{q}   - \Sigma_{xz} \frac{W_d}{q} \right) \Sigma_{zu}  \left( \frac{X'Z}{n}  \frac{\hat{W}_d}{q}  - \Sigma_{xz} \frac{W_d}{q} \right)'   \right]
 \label{t238a}\\
 & + & \left[\left( \frac{X'Z}{n}  \frac{\hat{W}_d}{q}  - \Sigma_{xz} \frac{W_d}{q} \right) \Sigma_{zu} \frac{W_d}{q} \Sigma_{xz}' \right] \label{t238b} \\
 & + & \left[\left( \frac{X'Z}{n}  \frac{\hat{W}_d}{q}  - \Sigma_{xz} \frac{W_d}{q} \right) \Sigma_{zu} \frac{W_d}{q} \Sigma_{xz}' \right]'.\label{t238} 
  \end{eqnarray}
We proceed by bounding each of the terms in the above display. Consider first (\ref{t238a}).
  \begin{eqnarray}
&& \|  \left( \frac{X'Z}{n}  \frac{\hat{W}_d}{q}  - \Sigma_{xz} \frac{W_d}{q} \right) \Sigma_{zu}  \left( \frac{X'Z}{n}  \frac{\hat{W}_d}{q}  - \Sigma_{xz} \frac{W_d}{q} \right)'   
 \|_{\infty} \nonumber \\
 & \le & \| \left( \frac{X'Z}{n}  \frac{\hat{W}_d}{q}  - \Sigma_{xz} \frac{W_d}{q} \right)  \|_{l_{\infty}} ^2
   \| \Sigma_{zu} \|_{\infty},\label{t239}
 \end{eqnarray}
where we used Lemma \ref{lineq} (iii) for the inequality. Consider the first term on the right side of (\ref{t239}):
\begin{eqnarray}
  \| \left( \frac{X'Z}{n}  \frac{\hat{W}_d}{q}  - \Sigma_{xz} \frac{W_d}{q} \right)  \|_{l_{\infty}} & = & 
    \| \left( \frac{X'Z}{n}  \frac{\hat{W}_d}{q}  - \Sigma_{xz} \frac{\hat{W}_d}{q} + \Sigma_{xz} \frac{\hat{W}_d}{q} - \Sigma_{xz} \frac{W_d}{q} \right)  \|_{l_{\infty}} \nonumber \\
    & \le &  \|  \frac{X'Z}{n}  \frac{\hat{W}_d}{q}  - \Sigma_{xz} \frac{\hat{W}_d}{q} \|_{l_{\infty}}  + \| \Sigma_{xz} \frac{\hat{W}_d}{q} - \Sigma_{xz} \frac{W_d}{q} \  \|_{l_{\infty}}         
    \nonumber \\
    & \le & q  \| \frac{X'Z}{n} - \Sigma_{xz} \|_{\infty} \| \frac{\hat{W}_d}{q} \|_{l_1} + 
    q \| \Sigma_{xz} \|_{\infty} \|\frac{\hat{W}_d - W_d}{q} \|_{l_1} \nonumber \\
    & = &  \| \frac{1}{n} \sum_{i=1}^n (X_i Z_i' - E X_i Z_i')  \|_{\infty}  \| \hat{W}_d \|_{l_1}  \nonumber \\
   & + & \| \frac{1}{n} \sum_{i=1}^n E X_i Z_i' \|_{\infty} \|\hat{W}_d - W_d  \|_{l_1},\label{t240}
    \end{eqnarray} 
where we used triangle inequality for the first inequality and Lemma \ref{lineq}(iv) for the second inequality. Consider the terms on the right-side of 
 (\ref{t240}).  By Lemma \ref{bound}(iv), Lemma \ref{l2} and $\hat{W}_d$ being diagonal \begin{eqnarray}
 \| \frac{1}{n} \sum_{i=1}^n (X_i Z_i' - E X_i Z_i')  \|_{\infty}  \| \hat{W}_d \|_{l_1}   =  O_p \left( \frac{\sqrt{\ln q}}{n^{1/2}}
 \right) .\label{t241}   
 \end{eqnarray} 
 Next, arguing as in (\ref{t210}) we obtain
 \begin{eqnarray}
  \| E X_i Z_i' \|_{\infty} \|\hat{W}_d - W_d  \|_{l_1}  = O_p  \left( \frac{\sqrt{\ln q} s_0}{n^{1/2}} \right)\label{t242} 
 \end{eqnarray} 
Using (\ref{t241}) and (\ref{t242}) results in
 \begin{equation}
  \| \left( \frac{X'Z}{n}  \frac{\hat{W}_d}{q}  - \Sigma_{xz} \frac{W_d}{q} \right)  \|_{l_{\infty}} = 
 O_p \left( \frac{\sqrt{\ln q}}{n^{1/2}} \right) +  O_p  \left( \frac{\sqrt{\ln q} s_0}{n^{1/2}} \right) =
  O_p  \left( \frac{\sqrt{\ln q} s_0}{n^{1/2}} \right).\label{t243}
  \end{equation}

By using the generalized version of Hölder's inequality we can bound $\| \Sigma_{zu} \|_{\infty}$ in (\ref{t239}).
  \[  \| \Sigma_{zu} \|_{\infty}  = \max_{1 \le l \le q} \max_{1 \le m \le q}  E Z_{il} Z_{im} u_i^2\leq [ \max_{1 \le l \le q}  E |Z_{il}|^3]^{1/3} [ \max_{1 \le m \le q}  E | Z_{im}|^3]^{1/3} [ E u_i^6]^{1/3}\leq C\]
where we used that $r_z > 12, r_u > 8$. Thus, 
 \begin{equation}
  \| \Sigma_{zu} \|_{\infty} = O(1).\label{t245} 
 \end{equation}
 Insert (\ref{t243}) and (\ref{t245}) into (\ref{t239}) to obtain 
  \begin{equation}
  \|  \left( \frac{X'Z}{n}  \frac{\hat{W}_d}{q}  - \Sigma_{xz} \frac{W_d}{q} \right) \Sigma_{zu}  \left( \frac{X'Z}{n}  \frac{\hat{W}_d}{q}  - \Sigma_{xz} \frac{W_d}{q} \right)'   
 \|_{\infty}  = O_p  \left( \frac{\ln qs_0^2} {n} \right) ,\label{t246} 
 \end{equation}
 which establishes the rate for (\ref{t238a}).
 Now consider (\ref{t238b})  
 \begin{eqnarray}
 \| (\frac{X'Z}{n} \frac{\hat{W}_d}{q} - \Sigma_{xz} \frac{W_d}{q}) \Sigma_{zu} (\frac{W_d}{q} \Sigma_{xz}') \|_{\infty}
 & \le &  \| (\frac{X'Z}{n} \frac{\hat{W}_d}{q} - \Sigma_{xz} \frac{W_d}{q}) \|_{l_{\infty}} \| \Sigma_{zu} (\frac{W_d}{q} \Sigma_{xz}') \|_{\infty} \nonumber \\
 & \le &  \| (\frac{X'Z}{n} \frac{\hat{W}_d}{q} - \Sigma_{xz} \frac{W_d}{q}) \|_{l_{\infty}} \| \Sigma_{zu}\|_{\infty} 
 \| (\frac{W_d}{q} \Sigma_{xz}') \|_{l_1},\label{t247}
    \end{eqnarray}
 where we use (\ref{ineq3}) for the first inequality and the dual norm inequality on p.44 in \cite{vdG16} for the second one.
 Next, by Lemma \ref{lineq}(vi), 
 \begin{eqnarray}
 \| \frac{W_d}{q} \Sigma_{xz}' \|_{l_1}  \le  q \| E X_i Z_i'  \|_{\infty} \| \frac{W_d}{q} \|_{l_{\infty}} 
 = \|  E X_i Z_i' \|_{\infty} \| W_d \|_{l_{\infty}} 
 =  O(1),\label{t248} 
 \end{eqnarray}
 where we use (\ref{a9}), and Assumption 1, with $W_d$ being diagonal, and $\min_{1 \le l \le q} \sigma_l^2$ being bounded away from zero.
Now use (\ref{t243}), (\ref{t245}) and (\ref{t248}) in (\ref{t247}) to get 
 \begin{eqnarray}
&& \| (\frac{X'Z}{n} \frac{\hat{W}_d}{q} - \Sigma_{xz} \frac{W_d}{q}) \Sigma_{zu} (\frac{W_d}{q} \Sigma_{xz}') \|_{\infty} 
 =  O_p \left( \frac{\sqrt{\ln q} s_0 }{n^{1/2}} \right) .\label{t249}
\end{eqnarray}
Next, (\ref{t238}) obeys the same bound as (\ref{t238b}) by the two matrices being each others transposes. Thus,
 \begin{equation}
 \| (\Sigma_{xz} \frac{W_d}{q}) \Sigma_{zu} ( \frac{X'Z}{n} \frac{\hat{W}_d}{q} - \Sigma_{xz} \frac{W_d}{q})' \|_{\infty} 
 = O_p \left( \frac{\sqrt{\ln q} s_0 }{n^{1/2}}\right).\label{t250}
 \end{equation}
 Now  use (\ref{t246}) in (\ref{t238a}),  (\ref{t249}) in (\ref{t238b}) and (\ref{t250}) in (\ref{t238}) to get
 \begin{eqnarray}
 \| \bar{V}_d - V_d \|_{\infty} = O_p \left( [\frac{\sqrt{\ln q} s_0 }{n^{1/2}}]^2 \right) + O_p  \left( \frac{\sqrt{\ln q} s_0 }{n^{1/2}}  \right)
 + O_p \left(\frac{\sqrt{\ln q}  s_0 }{n^{1/2}} \right)
= O_p \left(\frac{\sqrt{\ln q} s_0}{n^{1/2}} \right).\label{t251}
 \end{eqnarray}
 Using (\ref{t251}) in (\ref{t237}) together with (\ref{gel1}) yields
 \begin{eqnarray}
 |e_j' \hat{\Gamma} \bar{V}_d \hat{\Gamma}' e_j - e_j' \hat{\Gamma} V_d \hat{\Gamma}' e_j | 
 =  
 O_p (m_{\Gamma}^2) O_p \left(\frac{\sqrt{\ln q} s_0 }{n^{1/2}}  \right) 
 =
 o_p (1),\label{t252}
  \end{eqnarray}
 since by Assumption \ref{5cl}(iv) $\frac{m_{\Gamma}^2 s_0 \sqrt{\ln q} }{n^{1/2}} = o(1)$. Next, by (\ref{t236}) and (\ref{t252})
 we have that the following holds for (\ref{t212}):
 \begin{equation}
 |e_j' \hat{\Gamma} \tilde{V}_d \hat{\Gamma}' e_j - e_j' \hat{\Gamma} V_d \hat{\Gamma}' e_j | = o_p (1).\label{t253}
 \end{equation}
 
Finally, we bound (\ref{t213}). By Lemma 3.1 in the supplement of \cite{van2014}, we have
 \begin{eqnarray}
 |e_j' \hat{\Gamma} V_d \hat{\Gamma}' e_j - e_j' \Gamma V_d \Gamma' e_j | & \le & 
 Eigmax(V_d)^2 \|\hat{\Gamma}' e_j - \Gamma' e_j \|_2^2 \nonumber \\
 & + & 2 \| V_d \Gamma' e_j \|_2 \|\hat{\Gamma}' e_j - \Gamma' e_j \|_2.\label{t254} 
 \end{eqnarray}
 Now, 
 \begin{eqnarray}
 \| V_d' \Gamma' e_j \|_2 &=& \sqrt{e_j' \Gamma V_d^2 \Gamma' e_j}\nonumber \\
 & \le & \sqrt{ Eigmax (V_d)^2 e_j' \Gamma \Gamma' e_j } \nonumber \\
 & \le & \sqrt{Eigmax(V_d)^2  (Eigmax(\Gamma))^2}  = \frac{Eigmax(V_d)}{Eigmin (\Sigma)} \nonumber \\
 & = & O (1),\label{t255}
 \end{eqnarray}
where we used that $\Gamma$ and $V_d$ are symmetric, $e_j'e_j=1$ and $\Gamma = \Sigma^{-1}$. Finally Assumption \ref{5cl}(ii) was used. Next,
 \begin{equation}
 \| \hat{\Gamma}' e_j - \Gamma' e_j \|_2 \le \max_{1 \le j \le p} \| \hat{\Gamma}_j - \Gamma_j \|_2 = O_p \left(  s_{\Gamma} (m_{\Gamma} \mu )^{1-f}
\right),\label{t256}
\end{equation}
by $e_j'e_j=1$ and Lemma \ref{cl-l4}. Using (\ref{t255}) and (\ref{t256}) in (\ref{t254}) yields, along with the rate of $\mu$ in (\ref{rocmu}), 
 \begin{eqnarray}
 |e_j' \hat{\Gamma} V_d \hat{\Gamma}' e_j - e_j' \Gamma V_d \Gamma' e_j | & \le & 
 O(1) O_p \left(  s_{\Gamma}^2 (m_{\Gamma} \mu)^{2(1-f)}
 \right) + O(1) O_p \left( s_{\Gamma} (m_{\Gamma} \mu)^{1-f}
 \right) \nonumber \\
 & = & 
 O_p \left( s_{\Gamma} m_{\Gamma}^{2-2f} s_0^{1-f} \frac{\ln q^{(1-f)/2}}{n^{(1-f)/2}}
 \right)= o_p (1), \end{eqnarray}
 by Assumption \ref{5cl}(i). Thus, (\ref{t213}) is asymptotically negligible. As (\ref{t211}) and (\ref{t212}) have also been shown to be asymptotically negligible in (\ref{t229}) and (\ref{t253}), (\ref{t210a}) is established.
 
This concludes Step 2 since by Steps 2a-b we have shown that $t_{W_{d1}} - t_{W_{d1}^*} = o_p(1)$. Inspection of the above arguments also shows that (\ref{t210a}) is valid uniformly over ${\cal B}_{l_0}(s_0) = \{ \|\beta_0 \|_{l_0} \le s_0 \}$. 
 
% To replace diagonal weight, $\hat{W}_d$ with symmetric-positive definite weight, $\hat{W}$, is possible. To get that, we need in (\ref{t214})(\ref{t218}) (\ref{t233}): $\| \hat{W}\|_{l_{\infty} }= \| \hat{W} \|_{l_1} = O_p (1)$.  Then in (\ref{t242})(\ref{t248})(\ref{t250}) we need
% $\|W \|_{l_{\infty}} = \|W\|_{l_1} = O(1), \| \hat{W} - W \|_{l_1} = \| \hat{W} - W \|_{l_{\infty}} = o_p (1)$.

 {\bf Step 3}. Here we  show that $t_{W_{d2}} = o_p (1)$. Note that the denominator of $t_{W_{d2}}$ is the same as that of $t_{W_{d1}}$ which is bounded away from 0 with probability converging to one. It thus suffices to show that the numerator of $t_{W_{d2}}$ vanishes in probability. To this end, note that this numerator is upper bounded by $|e_j' \Delta|$ where 
 \[
 \Delta  
=
\left[ \hat{\Gamma} \hat{\Sigma}  - I_p \right] \sqrt{n} (\hat{\beta} - \beta_0) 
=
 \left[ \hat{\Gamma} \left( \frac{X'Z}{n} \frac{\hat{W}_d}{q} \frac{Z'X}{n} 
 \right) - I_p \right] \sqrt{n} (\hat{\beta} - \beta_0) . \]
 
 %Before the next steps, we benefit from this inequality, for a generic $A$ matrix, and generic $x$ vector, by using Holders inequality, and using 
 %$l_{\infty}$ norm of transpose of matrix is $l_{\infty}$ norm of the matrix, via the proof of (\ref{ineq2})
 %\begin{equation}
 %\|x' A \|_{\infty} = \| A' x \|_{\infty} \le \| A' \|_{\infty} \|x\|_1 = \| A \|_{\infty} \| x\|_1.\label{ineqhh}
 %\end{equation}
 
 %So, $e_j'$ is also a unit $1 \times p$ row vector,  and $\Delta_{d,j}$  is $j$ th element of the vector $\Delta_d$,
Next, note that 
\begin{eqnarray*}
 |e_j' \Delta | & = &
   \left| \left( e_j' (\hat{\Gamma} \hat{\Sigma} - I_p) \right) \sqrt{n} (\hat{\beta} - \beta_0) \right| \\
 & \le &   \| e_j' (\hat{\Gamma} \hat{\Sigma} - I_p ) \|_{\infty}   \| \sqrt{n} (\hat{\beta} - \beta_0) \|_1 \\
 & \le & \| e_j \|_1 \| \hat{\Gamma} \hat{\Sigma} - I_p \|_{\infty} \sqrt{n} \|\hat{\beta} - \beta_0 \|_1 \\
  &= &  O_p (\mu)  \sqrt{n} \|(\hat{\beta} - \beta_0) \|_1,
  \end{eqnarray*}
 where  we used Hölder's inequality for the inequality. 
 Furthermore, by definition of the CLIME program $\| \hat{\Gamma} \hat{\Sigma} -I_p \|_{\infty} \le \mu$.
 Then, by (\ref{rocmu})
\[  \mu = O \left( m_{\Gamma}\frac{s_0 \sqrt{\ln q} }{\sqrt{n}}\right).\]
which together with with Theorem \ref{1}(ii) gives that
\begin{eqnarray}
 |e_j' \Delta |
=
  O \left( m_{\Gamma} s_0  \frac{\sqrt{\ln q} }{\sqrt{n}}\right) O_p \left( \frac{\sqrt{\ln q}  s_0}{\sqrt{n}} \right)  n^{1/2} 
  =   O_p \left( \frac{ m_{\Gamma} s_0^2 \ln q}{\sqrt{n}} \right) = o_p (1)\label{eq:A1set},
 \end{eqnarray}
  by Assumption \ref{5cl}(i). This concludes Step 3 upon noting that the above estimates are valid uniformly over ${\cal B}_{l_0}(s_0) = \{ \|\beta_0 \|_{l_0} \le s_0 \}$ by Theorem \ref{thm1}(ii). 
\end{proof}
  \newpage

\newpage

\section{Supplementary Appendix}

\setcounter{equation}{0}\setcounter{lemma}{0}\setcounter{assum}{0}\renewcommand{\theequation}{S.%
\arabic{equation}}\renewcommand{\thelemma}{S.\arabic{lemma}}%
\renewcommand{\theassum}{S.\arabic{assum}}%
\renewcommand{\baselinestretch}{1}\baselineskip=15pt

In this part of the paper we present auxiliary technical lemmas and their proofs. We start with some matrix norm inequalities. Let $A$ be a generic $q \times p$ matrix and $x$ a $p \times 1$ vector. Define $a_l'$, $l=1,...,q$ as the $l$'th row of the matrix $A$. Finally, let $B$ be a $p \times q$ matrix, and $F$ a square $q \times q$ matrix while $x$ is a vector of conformable dimension.

\begin{lemma}\label{lineq}

(i). \[  \|A x \|_1 \le \left[ \sum_{l=1}^q \|a_l \|_{\infty} \right] \|x\|_1.\] 

(ii). \[ \|B x \|_{\infty} \le q \| B \|_{\infty} \| x\|_{\infty} \] 

(iii).\[ \| B F B' \|_{\infty} \le \| B \|_{l_{\infty}} \| B' \|_{l_1} \|F \|_{\infty}  = [\| B \|_{l_{\infty}}^2] \|F \|_{\infty}.\]

(iv). \[ \| B F \|_{l_{\infty}} \le q \| B \|_{\infty} \|F \|_{l_1}.\]

(v). \[ \| F B' \|_{l_{\infty}} \le  p \| B \|_{\infty} \|F \|_{l_{\infty}}.\]

(vi). \[ \| F B' \|_{l_1} \le q \| F \|_{l_{\infty}} \|B \|_{\infty} .\]

(vii). \[ \| B A \|_{\infty} \le q \| B \|_{\infty} \| A \|_{\infty}.\]

(viii). \[| x' B F A  x|  \le  q \| x\|_1^2 \| B \|_{\infty} \| F \|_{l_{\infty}} \| A \|_{\infty},\label{mnin1}. \]

\end{lemma}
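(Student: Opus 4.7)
The plan is to prove each part by a short direct computation using only the definitions of $\enVert[0]{\cdot}_\infty$ (entrywise max), $\enVert[0]{\cdot}_{l_\infty}$ (max absolute row sum), $\enVert[0]{\cdot}_{l_1}$ (max absolute column sum), together with the triangle inequality and Hölder's inequality. Since all seven inequalities are of essentially the same flavour, the work amounts to keeping careful track of which dimension each sum is taken over (the factor $q$ arising in most bounds being the number of columns of $F$ or $A$).

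For (i), I would use $(Ax)_l=a_l'x$ and Hölder row-by-row: $\enVert[0]{Ax}_1=\sum_{l=1}^q|a_l'x|\le\sum_{l=1}^q\enVert[0]{a_l}_\infty\enVert[0]{x}_1$. For (ii), bound each coordinate $|(Bx)_i|\le\sum_k|B_{ik}||x_k|\le q\enVert[0]{B}_\infty\enVert[0]{x}_\infty$. Item (vii) is entirely analogous entrywise: $|(BA)_{ij}|=|\sum_k B_{ik}A_{kj}|\le q\enVert[0]{B}_\infty\enVert[0]{A}_\infty$.

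For the induced-norm products (iii)--(vi), I would expand a single entry of the product and separate the resulting double sum. For (iii), $|(BFB')_{ij}|\le\sum_{k,l}|B_{ik}||F_{kl}||B_{jl}|\le\enVert[0]{F}_\infty\bigl(\sum_k|B_{ik}|\bigr)\bigl(\sum_l|B_{jl}|\bigr)\le\enVert[0]{F}_\infty\enVert[0]{B}_{l_\infty}^2$, and the equality in the statement follows from $\enVert[0]{B'}_{l_1}=\enVert[0]{B}_{l_\infty}$. For (iv), $\sum_j|(BF)_{ij}|\le\enVert[0]{B}_\infty\sum_j\sum_k|F_{kj}|\le q\enVert[0]{B}_\infty\enVert[0]{F}_{l_1}$, since the outer sum over $j$ has $q$ terms each bounded by $\enVert[0]{F}_{l_1}$. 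Items (v) and (vi) are symmetric variants: for (v) I bound $|(FB')_{ij}|\le\enVert[0]{B}_\infty\sum_k|F_{ik}|\le\enVert[0]{B}_\infty\enVert[0]{F}_{l_\infty}$ and then sum over $p$ values of $j$; for (vi) I sum over $i$ and the factor $q$ arises from $\sum_i\sum_k|F_{ik}|\le q\enVert[0]{F}_{l_\infty}$.

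Item (viii) is the one that chains several of the above. I would set $y=B'x$ and $z=Ax$, so that $x'BFAx=y'Fz$. The bounds $\enVert[0]{y}_1\le q\enVert[0]{B}_\infty\enVert[0]{x}_1$ (since $\sum_k|(B'x)_k|\le\sum_i|x_i|\sum_k|B_{ik}|\le q\enVert[0]{B}_\infty\enVert[0]{x}_1$) and $\enVert[0]{z}_\infty\le\enVert[0]{A}_\infty\enVert[0]{x}_1$ (Hölder per row of $A$) combine with $|y'Fz|\le\enVert[0]{y}_1\enVert[0]{Fz}_\infty\le\enVert[0]{y}_1\enVert[0]{F}_{l_\infty}\enVert[0]{z}_\infty$ to yield the stated inequality. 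There is no genuine obstacle in the proof; the only thing to watch is choosing, in (viii), which pair of dual norms ($\enVert[0]{\cdot}_1$ vs.\ $\enVert[0]{\cdot}_\infty$) to apply at each step so that $\enVert[0]{F}_{l_\infty}$ (rather than $\enVert[0]{F}_{l_1}$ or $\enVert[0]{F}_\infty$) appears on the right-hand side.
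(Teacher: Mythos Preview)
Your proposal is correct and takes essentially the same approach as the paper: direct entrywise or row/column-sum bounds via H\"older and the triangle inequality. The only organizational difference is in part (viii): the paper bounds $|x'BFAx|\le\enVert[0]{x}_1^2\enVert[0]{BFA}_\infty$ first and then controls $\enVert[0]{BFA}_\infty$ via the dual-norm inequality $\enVert[0]{BFA}_\infty\le\enVert[0]{B}_\infty\enVert[0]{FA}_{l_1}$ together with part (vi), whereas you factor the bilinear form as $y'Fz$ with $y=B'x$, $z=Ax$; both routes are equally elementary and yield the same bound.
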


\begin{proof}

(i) Using Hölder's inequality
\begin{equation}
 \|A x \|_1 = \| \left[ \begin{array}{c}
					a_1'x \\
					\vdots \\
					a_l'x \\
					\vdots \\
					a_q'x \end{array} \right] \|_1 = \sum_{l=1}^q |a_l'x| \le \left[ \sum_{l=1}^q \|a_l \|_{\infty}\right] \|x\|_1.\label{ineq1}
					\end{equation}

(ii) Letting $b_j'$ be the $j$th row of $B$ and $B_{jl}$ the $(j,l)$th entry of $B$, it follows by Hölder's inequality that
\begin{equation}
 \|B x \|_{\infty} = \| \left[ \begin{array}{c}
					b_1'x \\
					\vdots \\
					b_j'x \\
					\vdots \\
					b_p'x \end{array} \right] \|_{\infty} = \max_{1 \le j \le p}  |b_j'x| \le \max_{1 \le j \le p}  \left[ \sum_{l=1}^q |B_{jl}| \right] \|x\|_{\infty} \le 
					 q \left[\max_{1 \le j \le p}
\max_{1 \le l \le q} |B_{jl}| \right] \| x\|_{\infty}.\label{ineq2}
					\end{equation}

(iii) Let $a_j$ be the $j$th column of $A$, $j=1,...,p$. Then, 

\[ B A = \left[ \begin{array}{c}
					b_1' \\
					\vdots \\
					b_j' \\
					\vdots \\
					b_p' \end{array} \right] [a_1, \cdots, a_j, \cdots, a_p] =
					\left[ \begin{array}{ccc}
					b_1'a_1 & \cdots  &  b_1' a_p \\
					\vdots & \vdots & \vdots \\
					b_j' a_1 & \cdots & b_j' a_p \\
					\vdots & \vdots & \vdots \\
					b_p' a_1 & \cdots & b_p' a_p \end{array} \right].\]
					Thus, by Hölder's inequality
					\begin{eqnarray}
					\| B A \|_{\infty} & = & \max_{1 \le j \le p} \max_{1 \le k \le p} |b_j'a_k| \nonumber \\
					& \le & \max_{1 \le j \le p} \max_{1 \le k \le p} \|b_j \|_1 \|a_k \|_{\infty} \nonumber \\
					& = & \left[ \max_{ 1\le j \le p} \| b_j \|_1 \right] \left[ \max_{1 \le k \le p} \|a_k\|_{\infty} \right]  \nonumber \\
					& = & \|B \|_{l_{\infty}} \|A \|_{\infty},\label{ineq3}
					\end{eqnarray}
					
					Next, using $A = F  B'$, where $F$ is a generic $q \times q$ matrix, it follows from the dual norm inequality in section 4.3 
					of \cite{vdG16} that
					\begin{equation}
					\| F B' \|_{\infty} \le  \| F \|_{\infty} \| B' \|_{l_1},\label{ineq4}
					\end{equation}
					Combine (\ref{ineq3}) and (\ref{ineq4}) to get 
					\begin{equation}
					\| B F B' \|_{\infty} \le \| B \|_{l_{\infty}} \| B' \|_{l_1} \|F \|_{\infty},\label{ineq5}
					\end{equation}

(iv) Denoting the $j$th row of $B$ by $b_j'$, for $j=1,..., p$ and the columns of $F$ by $f_l$ for  $l=1,...,q$:
\[ B F = \left[ \begin{array}{c}
				b_1' \\
				\vdots \\
				b_j' \\
				\vdots \\
				b_p' \end{array} \right] [ f_1, \cdots, f_l, \cdots, f_q] = 
				\left[\begin{array}{ccccc}
				b_1' f_1 & \cdots & b_1' f_l & \cdots & b_1' f_q \\
				\vdots & \vdots & \vdots & \vdots & \vdots \\
				b_j' f_1 & \cdots & b_j' f_l & \cdots & b_j' f_q \\
				\vdots & \vdots & \vdots & \vdots & \vdots \\
				b_p' f_1 & \cdots & b_p' f_l & \cdots & b_p' f_q \end{array} \right].\]
				
				Then 
				\begin{eqnarray}
				\| B F \|_{l_{\infty}} & = & \max_{1 \le j \le p} \sum_{l=1}^q |b_j' f_l|  \le \max_{1 \le j \le p} \sum_{l=1}^q \|b_j \|_{\infty} \|f_l \|_1 \nonumber \\
				& \le & \|B \|_{\infty} \left[ q \max_{1 \le l \le q} \| f_l \|_1 \right]  = q \| B \|_{\infty} \|F \|_{l_1}, \label{ineq6}
				\end{eqnarray}
				where we used the definition $\|.\|_{l_1}$ in the last equality.

(v) Denoting the $m$th row of $F$ by $f_m'$, $m=1,..., q$ and the $j$th column of $B'$ by $b_j$, $j=1,..., p$ we first observe
\[  F B' = \left[ \begin{array}{c}
				f_1' \\
				\vdots \\
				f_m' \\
				\vdots \\
				f_q' \end{array} \right] [ b_1, \cdots, b_j, \cdots, b_p] = 
				\left[\begin{array}{ccccc}
				f_1' b_1 & \cdots & f_1' b_j & \cdots & f_1' b_p \\
				\vdots & \vdots & \vdots & \vdots & \vdots \\
				f_m' b_1  & \cdots & f_m' b_j & \cdots & f_m' b_p \\
				\vdots & \vdots & \vdots & \vdots & \vdots \\
				f_q' b_1 & \cdots & f_q' b_j & \cdots &  f_q' b_p \end{array} \right].\]				 
		By Hölder's inequality in the first inequality, and definition of norms afterwards
		\begin{eqnarray}
		\| F B' \|_{l_{\infty}} & = & \max_{1 \le m \le q} \sum_{j=1}^p | f_m' b_j| \le \max_{1 \le m \le q} \sum_{j=1}^p \|f_m\|_1 \|b_j \|_{\infty}	\nonumber \\
		& \le  & p \max_{1 \le j \le p} \| b_j \|_{\infty} \left[ \max_{1 \le m \le q} \| f_m \|_1 \right]  =  p \| B \|_{\infty} \|F \|_{l_{\infty}}.\label{ineq7}
		\end{eqnarray}		

(vi) By Hölder's inequality
		\begin{eqnarray*}
		 \| F B' \|_{l_1} &= &  \max_{1 \le j \le p} \sum_{m=1}^q |f_m' b_j|  \\
		 & \le & \max_{1 \le j \le p} \sum_{m=1}^q  \| f_m \|_1 \| b_j \|_{\infty} \\
		 & \le & \left[ q \max_{1 \le m \le q} \|f_m \|_1 \right]  \left[ \max_{1 \le j \le p} \| b_j \|_{\infty} \right] \\
		 & = & q \| F \|_{l_{\infty}} \| B \|_{\infty},
		 \end{eqnarray*}

(vii) By (\ref{ineq3}), and letting $b_j'$ be the $j$th row of $B$ we obtain:
\begin{eqnarray}
\| B A \|_{\infty} & \le & \| B \|_{l_{\infty}} \| A \|_{\infty} = [ \max_{1 \le j \le p} \| b_j' \|_1] \| A \|_{\infty} \nonumber \\
& \le & [ q \max_{1 \le j \le p} \max_{1 \le l \le q} |B_{jl}| ] \| A \|_{\infty} \nonumber \\
& = & q \| B \|_{\infty} \|A \|_{\infty}\label{ineqsa1}
\end{eqnarray}

(viii) Observe that 
\begin{eqnarray}
| x' B F A  x| & \le & \| x\|_1 \| BFAx \|_{\infty} \nonumber \\
& \le & \| x\|_1^2 \| B F A \|_{\infty} \nonumber \\
& \le &  \| x \|_1^2  \| B \|_{\infty} \| F A \|_{l_1} \nonumber \\
& \le & q \| x\|_1^2 \| B \|_{\infty} \| F \|_{l_{\infty}} \| A \|_{\infty},\label{mnin1}
\end{eqnarray}
where we use Hölder's inequality for the first and second inequalities. The third inequality uses the dual norm inequality of Section 4.3 in \cite{vdG16} while (vi) was used for the last one.

\end{proof}

The following lemma shows that the adaptive restricted eigenvalue, as defined prior to Lemma \ref{l1}, is bounded away from zero with high probability for the first step GMM estimator.

\begin{lemma}\label{evalue1}
Let Assumptions \ref{1} and \ref{2} be satisfied. 
Then, for $n$ sufficiently large, the set 
\[ {\cal A}_3 = \{ \hat{\phi}_{ \hat{\Sigma}_{xz}}^2 (s_0) \ge \phi_{\Sigma_{xz}}^2 (s_0)/2 \},\]
has probability at least $1 - \exp (- C\kappa_n) - \frac{K E M_2^{2}}{n \kappa_n}$, for universal positive constants $C, K$. Furthermore, the probability of $\mathcal{A}_3$ tends to one as $n\to\infty$.
\end{lemma}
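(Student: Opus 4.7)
The plan is to compare the empirical restricted eigenvalue to its population counterpart uniformly over the cone $\{\delta\in\mathbb{R}^p:\|\delta_{S^c}\|_1 \le 3\sqrt{s_0}\|\delta_S\|_2,\ |S|\le s_0\}$ by a perturbation argument based on $\|\hat{\Sigma}_{xz}-\Sigma_{xz}\|_\infty$. First, I would observe that $\phi_{\Sigma_{xz}}^2(s_0)$ is itself bounded away from zero: under Assumption \ref{1} with $r_z,r_u\ge 4$, the Cauchy--Schwarz inequality gives $\max_{1\le l\le q}\sigma_l^2\le C<\infty$, so $W_d=\operatorname{diag}(1/\sigma_l^2)$ satisfies $W_d\succeq C^{-1}I_q$ in the Loewner order, whence $\delta'\Sigma_{xz}\Sigma_{xz}'\delta\ge C\,\delta'\Sigma_{xz}W_d\Sigma_{xz}'\delta$ for all $\delta$; dividing by $q\|\delta_S\|_2^2$ and minimizing over the restricted cone shows $\phi_{\Sigma_{xz}}^2(s_0)\ge C\,\phi_{\Sigma_{xzw}}^2(s_0)>0$ by Assumption \ref{1}.

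Next, writing $D=\hat{\Sigma}_{xz}-\Sigma_{xz}$, I would use the algebraic identity
\begin{equation*}
\hat{\Sigma}_{xz}\hat{\Sigma}_{xz}'-\Sigma_{xz}\Sigma_{xz}' = DD' + D\Sigma_{xz}' + \Sigma_{xz}D',
\end{equation*}
and apply Lemma \ref{lineq}(viii) with $F=I_q$ to each of the three resulting quadratic forms in $\delta$. This yields
\begin{equation*}
\frac{|\delta'(\hat{\Sigma}_{xz}\hat{\Sigma}_{xz}'-\Sigma_{xz}\Sigma_{xz}')\delta|}{q} \le \|\delta\|_1^2\bigl(\|D\|_\infty^2 + 2\|D\|_\infty\|\Sigma_{xz}\|_\infty\bigr).
\end{equation*}
On the restricted set, combining $\|\delta_{S_0}\|_1\le\sqrt{s_0}\|\delta_{S_0}\|_2$ with $\|\delta_{S_0^c}\|_1\le 3\sqrt{s_0}\|\delta_{S_0}\|_2$ gives $\|\delta\|_1\le 4\sqrt{s_0}\|\delta_{S_0}\|_2$, so dividing by $q\|\delta_{S_0}\|_2^2$ produces the uniform bound
\begin{equation*}
\sup_{\delta}\frac{|\delta'(\hat{\Sigma}_{xz}\hat{\Sigma}_{xz}'-\Sigma_{xz}\Sigma_{xz}')\delta|}{q\|\delta_{S_0}\|_2^2} \le 16 s_0\bigl(\|D\|_\infty^2 + 2\|D\|_\infty\|\Sigma_{xz}\|_\infty\bigr).
\end{equation*}

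Finally I would invoke the concentration bound behind Lemma \ref{bound}(iii): inspecting (\ref{a8}) shows that on an event of probability at least $1-\exp(-C\kappa_n)-KEM_2^2/(n\kappa_n)$, one has $\|D\|_\infty\le t_3=O(\sqrt{\ln q/n})$; together with $\|\Sigma_{xz}\|_\infty=O(1)$ from (\ref{a9}) the right-hand side of the display above is $O(s_0\sqrt{\ln q/n})$, which is $o(1)$ by Assumption \ref{2}(i) (it implies the weaker $s_0\sqrt{\ln q/n}\to 0$). Hence for $n$ large enough the perturbation is bounded above by $\phi_{\Sigma_{xz}}^2(s_0)/2$, and taking the minimum over the restricted cone in the obvious two-sided bound gives $\hat{\phi}_{\hat{\Sigma}_{xz}}^2(s_0)\ge\phi_{\Sigma_{xz}}^2(s_0)/2$ on that event, delivering the stated probability bound and, via Assumption \ref{2}, the convergence to one.

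The only real subtlety, rather than a genuine obstacle, is that the restricted set depends on an arbitrary $S$ with $|S|\le s_0$ and so the supremum is over a union of cones; but because the perturbation bound obtained above controls $\|\delta\|_1^2/\|\delta_S\|_2^2$ uniformly through the cone condition alone (and not through the identity of $S$), the minimization carries through without requiring any union bound over $S$.
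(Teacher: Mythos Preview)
Your proposal is correct and follows essentially the same route as the paper: the decomposition $\hat\Sigma_{xz}\hat\Sigma_{xz}'-\Sigma_{xz}\Sigma_{xz}'=DD'+D\Sigma_{xz}'+\Sigma_{xz}D'$, the bound $|\delta'M\delta|/q\le\|\delta\|_1^2\|B\|_\infty\|A\|_\infty$ for each piece, the cone inequality $\|\delta\|_1\le 4\sqrt{s_0}\|\delta_{S_0}\|_2$, and the concentration estimate (\ref{a8}) for $\|D\|_\infty\le t_3$ are exactly what the paper does in (\ref{ev1})--(\ref{s16a}). Your opening paragraph showing $\phi_{\Sigma_{xz}}^2(s_0)\ge C\,\phi_{\Sigma_{xzw}}^2(s_0)>0$ is a correct and useful supplementary observation (it explains why the conclusion is nontrivial) that the paper does not include in this proof but relies on implicitly elsewhere.
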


\begin{proof}[Proof of Lemma \ref{evalue1}]
We begin by noting that for any $p\times 1$ vector $\delta$
\begin{eqnarray}
\frac{1}{q} \left| \delta' \frac{X'Z}{n}  \frac{Z'X}{n} \delta \right| & = & 
\frac{1}{q} \left| \delta' (\frac{X'Z}{n} - \Sigma_{xz})  ( \frac{Z'X}{n} - \Sigma_{xz}') \delta
+ 2 \delta' (\frac{X'Z}{n} - \Sigma_{xz}) \Sigma_{xz}' \delta+ \delta' \Sigma_{xz} \Sigma_{xz}' \delta \right| \nonumber \\
& \ge & \frac{1}{q} \left| \delta'  \Sigma_{xz} \Sigma_{xz}' \delta \right|  - \frac{2}{q} \left|\delta' (\frac{X'Z}{n} - \Sigma_{xz}) \Sigma_{xz}' \delta \right|
\nonumber \\
& - & \frac{1}{q}  \left| \delta' (\frac{X'Z}{n} - \Sigma_{xz})  ( \frac{Z'X}{n} - \Sigma_{xz}') \delta \right|\label{ev1}
\end{eqnarray}
The second term on the right side of (\ref{ev1}) can be bounded as follows:
\begin{eqnarray}
\frac{2}{q} \left|\delta' (\frac{X'Z}{n} - \Sigma_{xz}) \Sigma_{xz}' \delta \right| & \le & \frac{2}{q} \|\delta\|_1^2  \| (\frac{X'Z}{n} - \Sigma_{xz})\Sigma_{xz}'\|_{\infty}  \nonumber \\
& \le & 2 \| \delta\|_1^2   \| (\frac{X'Z}{n} - \Sigma_{xz}) \|_{\infty}  \| \Sigma_{xz}'\|_{\infty},\label{s10a}
\end{eqnarray}
where for the second inequality we used (\ref{ineqsa1}).
For the third term on the right side of (\ref{ev1}) we get in the same way as above
\begin{equation}
\frac{1}{q}  \left| \delta' (\frac{X'Z}{n} - \Sigma_{xz})  ( \frac{Z'X}{n} - \Sigma_{xz}') \delta \right|
\le \| \delta \|_1^2 \left[ \| \frac{X'Z}{n} - \Sigma_{xz} \|_{\infty} \right]^2.\label{s11}
\end{equation}
Inserting (\ref{s10a})(\ref{s11}) in (\ref{ev1}) yields
\begin{equation}
\frac{1}{q} \| \frac{Z'X}{n} \delta \|_2^2 \ge \frac{1}{q} \|\Sigma_{xz}' \delta \|_2^2 - 2 \| \delta\|_1^2 
\| \frac{X'Z}{n} - \Sigma_{xz} \|_{\infty}  \| \Sigma_{xz}'\|_{\infty} - \|\delta\|_1^2 
\left[ \| \frac{X'Z}{n} - \Sigma_{xz} \|_{\infty} \right]^2.\label{ev2}
\end{equation} 
Note that we have the restriction $\| \delta_{S_0^c} \|_1 \le 3 \sqrt{s_0} \|\delta_{S_0}\|_2$. Add $\|\delta_{S_0}\|_1$ to both sides of this to get $\| \delta \|_1 \le 4 \sqrt{s} \| \delta_{S_0}\|_2$ where we also used the Cauchy-Schwarz inequality. Thus,
\begin{equation}
\frac{\| \delta \|_1^2}{\| \delta_{S_0} \|_2^2} \le 16s_0.\label{ev3}
\end{equation}
Divide (\ref{ev2}) by $\| \delta_{S_0} \|_2^2>0$ and use (\ref{ev3})
\begin{equation}
\frac{1}{q} \frac{ \| \frac{Z'X}{n} \delta \|_2^2}{\| \delta_{S_0}\|_2^2} \ge \frac{1}{q} \frac{ \| \Sigma_{xz}' \delta \|_2^2}{\|\delta_{S_0}\|_2^2}
-32 s_0  \left[ \| \frac{X'Z}{n} - \Sigma_{xz} \|_{\infty} \| \Sigma_{xz}' \|_{\infty} 
\right] - 16s_0 \left(  \| \frac{X'Z}{n} - \Sigma_{xz} \|_{\infty}\right)^2.\label{ev4}
\end{equation}
Using that $\frac{1}{q} \frac{ \| \Sigma_{xz}' \delta \|_2^2}{\|\delta_{S_0}\|_2^2}
\geq \phi_{\Sigma_{xz}}^2(s_0)$ for all $\delta$ satisfying $\enVert[0]{\delta_{S_0^c}}_1\leq 3\sqrt{s_0}\enVert[0]{\delta_{S_0}}_2$ and minimizing the left hand side over these $\delta$ yields 
\begin{align*}
\hat{\phi}_{\hat{\Sigma}_{xz}}^2 (s_0) \geq \phi_{\Sigma_{xz}}^2(s_0) 
-32 s_0  \left[ \| \frac{X'Z}{n} - \Sigma_{xz} \|_{\infty} \| \Sigma_{xz}' \|_{\infty} 
\right] - 16s_0 \left(  \| \frac{X'Z}{n} - \Sigma_{xz} \|_{\infty}\right)^2.
\end{align*}
Note that if with probability approaching one (wpa1)
\begin{equation}
 32 s_0  \left[ \| \frac{X'Z}{n} - \Sigma_{xz} \|_{\infty} \| \Sigma_{xz}' \|_{\infty} 
\right] +16s_0 \left(  \| \frac{X'Z}{n} - \Sigma_{xz} \|_{\infty}\right)^2 \leq  \phi_{\Sigma_{xz}}^2(s_0)/2 .\label{c0n}
\end{equation}
then 
$\hat{\phi}_{\hat{\Sigma}_{xz}}^2 (s_0) \geq \phi_{\Sigma_{xz}}^2(s_0)/2 $ wpa1.
Thus,
\[ P \left( \hat{\phi}_{\hat{\Sigma}_{xz}}^2 (s_0) < \phi_{\Sigma_{xz}}^2 (s_0)/2
\right)
\le P \left( 32 s_0  \left[ \| \frac{X'Z}{n} - \Sigma_{xz} \|_{\infty} \| \Sigma_{xz}' \|_{\infty} 
\right] +16s_0 \left(  \| \frac{X'Z}{n} - \Sigma_{xz} \|_{\infty}\right)^2 > \phi_{\Sigma_{xz}}^2 (s_0)/2\right).\]
Letting $t_3$ as in (\ref{t2s}) define  
\begin{equation}
 \epsilon_{1n} := 32 s_0 t_3 \| \Sigma_{xz}' \|_{\infty} + 16 s_0 (t_3)^2\label{ep1}
 \end{equation}
and note that
\begin{eqnarray}
P \left( 32 s_0  \left[ \| \frac{X'Z}{n} - \Sigma_{xz} \|_{\infty} \| \Sigma_{xz}' \|_{\infty} 
\right] +16s_0 \left(  \| \frac{X'Z}{n} - \Sigma_{xz} \|_{\infty}\right)^2 > \epsilon_n\right) & \le & 
P \left( \| \frac{X'Z}{n} - \Sigma_{xz} \|_{\infty} > t_3\right) \nonumber \\
&\le& \exp(-C\kappa_n) + \frac{K E M_2^{2}}{n\kappa_n},\label{s16a}
\end{eqnarray}
by (\ref{a8}).
Since $\epsilon_{1n}\to 0$ by Assumption \ref{2}, for $n$ sufficiently large, by (\ref{c0n})(\ref{s16a})
\begin{align*}
P \left( \hat{\phi}_{\hat{\Sigma}_{xz}}^2 (s_0) < \phi_{\Sigma_{xz}}^2 (s_0)/2
\right)
\leq
\exp(-C\kappa_n) + \frac{K E M_2^{2}}{n\kappa_n}\to 0
\end{align*}
by Assumption \ref{2}.
\end{proof}

The following lemma  verifies the adaptive restricted eigenvalue condition for the two-step GMM estimator. 
%between the empirical and population adaptive restricted eigenvalue in two-step GMM as defined in (\ref{pev1}) and (\ref{eev1}).

\begin{lemma}\label{evalue2}
Let Assumptions \ref{1} and \ref{2} be satisfied. 
Then, for $n$ sufficiently large, the set 
\[ {\cal A}_4 = \{ \hat{\phi}_{\hat{\Sigma}_{xz\hat{w}}}^2 (s_0) \ge \phi_{\Sigma_{xzw}}^2 (s_0)/2\},\]
has probability at least $1 - 10 \exp (- C \kappa_n) - 
\frac{K[ 2 EM_1^{2} + 5 E M_2^{2} + E M_3^{2} + E M_4^{2} + E M_5^{2}]}{n\kappa_n}$
for universal positive constants $C, K$. Furthermore, the probability of $\mathcal{A}_4$ tends to one as $n\to\infty$.
\end{lemma}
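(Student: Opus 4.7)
The plan is to follow the strategy used for Lemma \ref{evalue1} but with the added complication of the data-dependent diagonal weight matrix $\hat{W}_d$. First I would write $\hat{W}_d = W_d + (\hat{W}_d - W_d)$ and $\frac{X'Z}{n} = \Sigma_{xz} + (\frac{X'Z}{n} - \Sigma_{xz})$, and expand the quadratic form
\[
\tfrac{1}{q}\delta'\tfrac{X'Z}{n}\hat{W}_d\tfrac{Z'X}{n}\delta
=
\tfrac{1}{q}\delta'\Sigma_{xz}W_d\Sigma_{xz}'\delta + R_n(\delta),
\]
where $R_n(\delta)$ consists of the seven cross terms involving at least one of the perturbations $\Delta_A:=\frac{X'Z}{n}-\Sigma_{xz}$ or $\Delta_W:=\hat{W}_d-W_d$. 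Each of these cross terms is a quadratic form in $\delta$ sandwiched by three matrices, so inequality (viii) of Lemma \ref{lineq} applies, giving, for instance,
\[
\tfrac{1}{q}|\delta'\Sigma_{xz}W_d\Delta_A'\delta|\le\|\delta\|_1^2\,\|\Sigma_{xz}\|_\infty\,\|W_d\|_{l_\infty}\,\|\Delta_A\|_\infty,
\]
and analogous bounds for the remaining six terms.

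Next I would assemble the rate for each factor. By (\ref{a9}) one has $\|\Sigma_{xz}\|_\infty=O(1)$, and $\|W_d\|_{l_\infty}=1/\min_l\sigma_l^2=O(1)$ by Assumption \ref{1}. By Lemma \ref{bound}(iv) combined with (\ref{a8}) the event $\{\|\Delta_A\|_\infty\le t_3\}$ has probability at least $1-\exp(-C\kappa_n)-KEM_2^2/(n\kappa_n)$, while by Lemma \ref{l2} and Remark \ref{rem:sigmas} together with the identity $\|\hat{W}_d-W_d\|_{l_\infty}=\max_l|\hat\sigma_l^2-\sigma_l^2|/(\min_l\hat\sigma_l^2\min_l\sigma_l^2)$, one has $\|\Delta_W\|_{l_\infty}\le c_{1n}=O(s_0\sqrt{\ln q/n})$ on an event of probability at least $1-9\exp(-C\kappa_n)-K[2EM_1^2+4EM_2^2+EM_3^2+EM_4^2+EM_5^2]/(n\kappa_n)$. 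On the intersection of these events the term involving $\Delta_W$ alone dominates, so $|R_n(\delta)|\le C\|\delta\|_1^2(t_3+c_{1n}+t_3 c_{1n}+t_3^2)$.

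Then I would impose the restricted-set condition: if $\|\delta_{S_0^c}\|_1\le 3\sqrt{s_0}\|\delta_{S_0}\|_2$ then, as in the proof of Lemma \ref{evalue1}, $\|\delta\|_1\le 4\sqrt{s_0}\|\delta_{S_0}\|_2$, whence $\|\delta\|_1^2/\|\delta_{S_0}\|_2^2\le 16 s_0$. Dividing the expansion by $\|\delta_{S_0}\|_2^2$ and minimizing over $\delta$ satisfying the restricted set condition, I get
\[
\hat{\phi}_{\hat\Sigma_{xz\hat w}}^2(s_0)\ge \phi_{\Sigma_{xzw}}^2(s_0)-16Cs_0\bigl(t_3+c_{1n}+t_3 c_{1n}+t_3^2\bigr)=:\phi_{\Sigma_{xzw}}^2(s_0)-\epsilon_{2n}.
\]
Since $t_3=O(\sqrt{\ln q/n})$ and $c_{1n}=O(s_0\sqrt{\ln q/n})$, the leading term of $\epsilon_{2n}$ is $O(s_0^2\sqrt{\ln q/n})$, which is $o(1)$ by Assumption \ref{2}(i). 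Hence for $n$ large enough $\epsilon_{2n}\le\phi_{\Sigma_{xzw}}^2(s_0)/2$, so on the intersection of the two aforementioned events we have $\hat{\phi}_{\hat\Sigma_{xz\hat w}}^2(s_0)\ge\phi_{\Sigma_{xzw}}^2(s_0)/2$. A union bound then delivers the stated probability lower bound, and the probability tends to one by Assumption \ref{2}.

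The main obstacle here is the bound on $\|\hat{W}_d-W_d\|_{l_\infty}$, which brings in an extra factor $s_0$ relative to the first-step argument in Lemma \ref{evalue1}; this is precisely what forces the $s_0^2\sqrt{\ln q/n}\to 0$ assumption in Assumption \ref{2}(i) rather than a weaker $s_0\sqrt{\ln q/n}\to 0$ condition. Once that rate is accepted, the rest of the argument is an essentially routine expansion-and-norm bookkeeping exercise.
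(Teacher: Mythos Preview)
Your proposal is correct and follows essentially the same approach as the paper: expand the quadratic form by writing $\hat{W}_d=W_d+(\hat{W}_d-W_d)$ and $\frac{X'Z}{n}=\Sigma_{xz}+(\frac{X'Z}{n}-\Sigma_{xz})$, bound each cross term via Lemma~\ref{lineq}(viii), use the restricted-set inequality $\|\delta\|_1^2\le 16s_0\|\delta_{S_0}\|_2^2$, and conclude by a union bound over the events from (\ref{a8}) and Remark~\ref{rem:sigmas}. The paper's $\epsilon_{2n}$ in (\ref{ep2}) is exactly your error term up to bookkeeping (it also retains the subdominant $t_3^2c_{1n}$ contribution you omitted), and its rate identification $\epsilon_{2n}=O(s_0^2\sqrt{\ln q/n})$ in (\ref{ep2r}) matches your conclusion.
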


\begin{proof}
(i). By adding and subtracting $\Sigma_{xz}, W_d$,
\begin{eqnarray}
\frac{1}{q} | \delta' \frac{X'Z}{n} \hat{W}_d \frac{Z'X}{n} \delta | & = & 
\frac{1}{q} | \delta' \frac{X'Z - \Sigma_{xz}+ \Sigma_{xz}}{n} [\hat{W}_d - W_d + W_d]  \frac{Z'X- \Sigma_{xz}' + \Sigma_{xz}'}{n} \delta |
\nonumber \\
& \ge & \frac{1}{q} | \delta' \Sigma_{xz} W_d \Sigma_{xz}' \delta | - \frac{1}{q} | \delta' 
(\frac{X'Z}{n} - \Sigma_{xz}) (\hat{W}_d - W_d) ( \frac{Z'X}{n} - \Sigma_{xz}') \delta| \nonumber \\
& - & \frac{1}{q} | \delta' 
(\frac{X'Z}{n} - \Sigma_{xz}) (W_d) ( \frac{Z'X}{n} - \Sigma_{xz}') \delta| \nonumber \\
& - & \frac{1}{q} | \delta' 
(\Sigma_{xz}) (\hat{W}_d - W_d) ( \Sigma_{xz}') \delta| \nonumber \\
& - & \frac{2}{q} | \delta' 
(\frac{X'Z}{n} - \Sigma_{xz}) (\hat{W}_d - W_d) ( \Sigma_{xz}') \delta| \nonumber \\
& - & \frac{2}{q} | \delta' 
(\frac{X'Z}{n} - \Sigma_{xz}) (W_d) ( \Sigma_{xz}') \delta|,\label{ev21}  
\end{eqnarray}
Now we consider the second term on the right side of the inequality in  (\ref{ev21}). 
\[ \frac{1}{q} | \delta' (\frac{X'Z}{n} - \Sigma_{xz}) (\hat{W}_d - W_d) (\frac{Z'X}{n} - \Sigma_{xz}') \delta| 
\le 
\| \delta \|_1^2 \left[ \| \frac{X'Z}{n} - \Sigma_{xz}\|_{\infty}^2 \right] \| \hat{W}_d - W_d \|_{l_{\infty}},\]
by Lemma \ref{lineq} (viii). By the same reasoning,
\[ \frac{1}{q} | \delta' ( \frac{X'Z}{n} - \Sigma_{xz}) W_d (\frac{Z'X}{n} - \Sigma_{xz}') \delta | 
\le \|\delta \|_1^2 \left[ \| \frac{X'Z}{n} - \Sigma_{xz}\|_{\infty}^2\right] \| W_d \|_{l_{\infty}}.\]
\[ \frac{1}{q} | \delta' 
(\Sigma_{xz}) (\hat{W}_d - W_d) ( \Sigma_{xz}') \delta| \le \| \delta \|_1^2 [ \| \Sigma_{xz} \|_{\infty}^2] \| \hat{W}_d - W_d \|_{l_{\infty}}.\]
\[ \frac{2}{q} | \delta' 
(\frac{X'Z}{n} - \Sigma_{xz}) (\hat{W}_d - W_d) ( \Sigma_{xz}') \delta|
\le  2 \| \delta \|_1^2  \| \frac{X'Z}{n} - \Sigma_{xz} \|_{\infty} \| \Sigma_{xz} \|_{\infty} \| \hat{W}_d - W_d \|_{l_{\infty}}.\]
\[ \frac{2}{q} | \delta' 
(\frac{X'Z}{n} - \Sigma_{xz}) (W_d) ( \Sigma_{xz}') \delta| \le 
2 \| \delta \|_1^2  \| \frac{X'Z}{n} - \Sigma_{xz} \|_{\infty} \| \Sigma_{xz} \|_{\infty} \| W_d \|_{l_{\infty}}.\]
By (\ref{ev3}) and $\hat{W}_d$ and $W_d$ being positive definite matrices, (\ref{ev21}) thus yields
\begin{eqnarray}
\frac{\| \hat{W_d}^{1/2} \frac{Z'X}{n} \delta\|_2^2}{q \| \delta_{S_0} \|_2^2} 
& \ge & \frac{\| W_d^{1/2} \Sigma_{xz}' \delta\|_2^2}{q \| \delta_{S_0} \|_2^2} \nonumber \\
& -  & 16s_0  \left[ \| \frac{X'Z}{n} - \Sigma_{xz} \|_{\infty} \right]^2 \left[ \| \hat{W}_d - W_d \|_{l_{\infty}} + \| W_d \|_{l_{\infty}} \right] \nonumber \\
& - & 16s_0  [ \| \Sigma_{xz} \|_{\infty}^2] \| \hat{W}_d - W_d \|_{l_{\infty}} \nonumber \\
& - & 32 s_0 \left( \| \frac{X'Z}{n} - \Sigma_{xz} \|_{\infty} \| \Sigma_{xz} \|_{\infty} \right) 
\left( \| \hat{W}_d - W_d \|_{l_{\infty}} + \| W_d \|_{l_{\infty}} \right).\label{ev22} 
\end{eqnarray}
Since $\frac{\| W_d^{1/2} \Sigma_{xz}' \delta\|_2^2}{q \| \delta_{S_0} \|_2^2}\geq \phi^2_{\Sigma_{xzw}}(s_0)/2$ for all $\delta\in\mathbb{R}^p$ such that $||\delta_{S_0^c}||_1\leq 3\sqrt{s_0}||\delta||_2$ minimizing the left hand side of the above display over such $\delta$ yields
\begin{align*}
\hat{\phi}^2_{\hat{\Sigma}_{xz\hat{w}}}(s_0)
&\geq
\phi^2_{\Sigma_{xzw}}(s_0)
-   a_n 
\end{align*}
for 
\begin{align*}
a_n:&=16s_0  \left[ \| \frac{X'Z}{n} - \Sigma_{xz} \|_{\infty} \right]^2 \left[ \| \hat{W}_d - W_d \|_{l_{\infty}} + \| W_d \|_{l_{\infty}} \right]  
 +  16s_0  [ \| \Sigma_{xz} \|_{\infty}^2] \| \hat{W}_d - W_d \|_{l_{\infty}}\\ 
& +  32 s_0 \left( \| \frac{X'Z}{n} - \Sigma_{xz} \|_{\infty} \| \Sigma_{xz} \|_{\infty} \right) 
\left( \| \hat{W}_d - W_d \|_{l_{\infty}} + \| W_d \|_{l_{\infty}} \right)
\end{align*}
Note that if $a_n\leq \phi^2_{\Sigma_{xzw}}(s_0)/2$ wpa1, then $\hat{\phi}^2_{\hat{\Sigma}_{xz\hat{w}}}(s_0)\geq \phi^2_{\Sigma_{xzw}}(s_0)/2$, wpa1. Thus,
\begin{align*}
P\del[1]{\hat{\phi}^2_{\hat{\Sigma}_{xz\hat{w}}}(s_0)<\phi^2_{\Sigma_{xzw}}(s_0)/2}
\leq
P\del[1]{a_n> \phi^2_{\Sigma_{xzw}}(s_0)/2}
\end{align*}
As argued just after (\ref{la96}) one has that $||\hat{W}_d - W_d||_{l_{\infty}}\leq c_{1n}$ wpa1, if $\max_{1\leq j\leq p}||\hat{\sigma}^2_j-\sigma^2_j||\leq c_n$ wpa1 where $c_{1n}$ is defined in (\ref{c1n}).
Define 
\begin{equation}
\epsilon_{2n}: = 16 s_0 (t_3)^2[ c_{1n} + \| W_d \|_{l_{\infty}}] + 16 s_0 ( \| \Sigma_{xz} \|_{\infty}^2) c_{1n} 
+ 32 s_0 t_3 \|\Sigma_{xz} \|_{\infty} ( c_{1n} + \| W_d \|_{l_{\infty}}).\label{ep2}
\end{equation}
Then 
\begin{align}
P(a_n> \epsilon_{2n})
&\leq
P(a_n> \epsilon_{2n}, \max_{1\leq j\leq p}||\hat{\sigma}^2_j-\sigma^2_j||\leq c_n)+P(\max_{1\leq j\leq p}||\hat{\sigma}^2_j-\sigma^2_j||> c_n) \nonumber \\
&\leq 
P\del[2]{||\frac{X'Z}{n} - \Sigma_{xz}||_{\infty}> t_3}+P(\max_{1\leq j\leq p}||\hat{\sigma}^2_j-\sigma^2_j||> c_n) \nonumber \\
&\leq
10 \exp(-C \kappa_n)+ \frac{K [ 2 E M_1^{2} + 5 E M_2^{2} + E M_3^{2} + E M_4^{2} + E M_5^{2}]}{n \kappa_n} \to 0,\label{s19a}
\end{align}
by (\ref{a8}) and Remark \ref{rem:sigmas} in section \ref{8.4}.  By Assumption 2, the the right hand side of the above display converges to zero. Furthermore, by Lemma \ref{bound}(iv), $t_3 = O (\sqrt{\ln q/n})$ and inspecting the proof of Lemma \ref{l2} yields have $c_{1n} = O (s_0 \sqrt{\ln q/n})$ (upon noting that $c_{1n}=O(c_n)$ in (\ref{c1n})). By 
(\ref{a9}) we have $\| \Sigma_{xz} \|_{\infty} \le C < \infty$, and Assumption 1 gives $\| W_d \|_{l_{\infty}} = O (1)$. Thus
\begin{equation}
\epsilon_{2n} = O( s_0^2 \sqrt{\ln q/n}) \to 0,\label{ep2r}
\end{equation}
by Assumption \ref{2}. Therefore, for $n$ sufficiently large, by (\ref{s19a})
\begin{align*}
P\del[1]{\hat{\phi}^2_{\hat{\Sigma}_{xz\hat{w}}}(s_0)<\phi^2_{\Sigma_{xzw}}(s_0)/2}
&\leq
P\del[1]{a_n> \phi^2_{\Sigma_{xzw}}(s_0)/2}
%\leq
%P(a_n> \epsilon_n)
\\
&\leq 
10 \exp(-C \kappa_n)+ \frac{K [ 2 E M_1^{2} + 5 E M_2^{2} + E M_3^{2} + E M_4^{2} + E M_5^{2}]}{n \kappa_n}
\to 0.
\end{align*}
\end{proof}

\begin{proof}[Proof of Theorem \ref{thm2c}]
We begin with part (i). For $\epsilon>0$ define the events
 \[ A_{1n} = \{ \sup_{\beta_0 \in {\cal B}_{l_0} }|e_j' \Delta| < \epsilon \},\]
 \[ A_{2n} = \left\{ \sup_{\beta_0 \in {\cal B}_{l_0}} \left| \frac{\sqrt{e_j' \hat{\Gamma} \hat{V}_d \hat{\Gamma}' e_j}}{
 \sqrt{e_j' \Gamma V_d \Gamma' e_j}} - 1 \right| < \epsilon \right\},\]
 \[ A_{3,n} = \{ \sup_{\beta_0 \in {\cal B}_{l_0}} | e_j' \hat{\Gamma} \frac{X'Z}{nq} \hat{W}_d \frac{Z'u}{n^{1/2}} - e_j' \Gamma \Sigma_{xz} \frac{W_d}{q} \frac{Z'u}{n^{1/2}} | < \epsilon \}.\]
The probability of $A_{1n}$ converges to one by (\ref{eq:A1set}) while the probability of $A_{2n}$  tends to one by (\ref{t210a}) and $e_j' \Gamma V_d \Gamma' e_j$ being bounded away from zero. Finally, $A_{3n}$ converges  to one in probability by step 2a in the proof of Theorem \ref{thmcl1}. Thus, every $t \in \mathbb{R}$,
 \begin{eqnarray*}
 && \sup_{\beta_0 \in {\cal B}_{l_0}} \left| P \left( \frac{n^{1/2} (\hat{b}_j -  \beta_{j0})}{\sqrt{e_j' \hat{\Gamma} \hat{V}_d \hat{\Gamma}' e_j}} \le t \right) - \Phi (t) \right| \\
 & = &  \sup_{\beta_0 \in {\cal B}_{l_0}}
  \left| P \left( \frac{e_j' \hat{\Gamma} \frac{X'Z}{nq} \hat{W}_d  \frac{Z'u}{n^{1/2}}}{\sqrt{e_j' \hat{\Gamma} \hat{V}_d \hat{\Gamma}' e_j}}
 - \frac{e_j' \Delta}{\sqrt{e_j' \hat{\Gamma} \hat{V}_d \hat{\Gamma}' e_j}} \le t \right)  - \Phi (t) \right| \\
 & \le & \sup_{\beta_0 \in {\cal B}_{l_0}}
  \left| P \left( \frac{e_j' \hat{\Gamma} \frac{X'Z}{nq} \hat{W}_d  \frac{Z'u}{n^{1/2}}}{\sqrt{e_j' \hat{\Gamma} \hat{V}_d \hat{\Gamma}' e_j}}
 - \frac{e_j' \Delta}{\sqrt{e_j' \hat{\Gamma} \hat{V}_d \hat{\Gamma}' e_j}} \le t, A_{1n}, A_{2n}, A_{3n} \right)  - \Phi (t) \right| 
 + P ( \cup_{i=1}^3 A_{in}^c).
 \end{eqnarray*}
Using that $e_j' \Gamma V_d \Gamma' e_j$ is bounded away from zero and does not depend on $\beta_0$ it follows that there exists a universal $D>0$ such that
\begin{eqnarray*}
&& \sup_{\beta_0 \in {\cal B}_{l_0}}
 P  \left( \frac{e_j' \hat{\Gamma} \frac{X'Z}{nq} \hat{W}_d  \frac{Z'u}{n^{1/2}}}{\sqrt{e_j' \hat{\Gamma} \hat{V}_d \hat{\Gamma}' e_j}}
 - \frac{e_j' \Delta}{\sqrt{e_j' \hat{\Gamma} \hat{V}_d \hat{\Gamma}' e_j}} \le t, A_{1n}, A_{2n}, A_{3n} \right) \\
 & = & \sup_{\beta_0 \in {\cal B}_{l_0}} 
P \left( \frac{e_j' \hat{\Gamma} \frac{X'Z}{nq} \hat{W}_d  \frac{Z'u}{n^{1/2}}}{\sqrt{e_j' \Gamma V_d \Gamma' e_j}}
 - \frac{e_j' \Delta}{\sqrt{e_j' \Gamma V_d \Gamma' e_j}} \le t \frac{\sqrt{e_j' \hat{\Gamma} \hat{V}_d \hat{\Gamma}' e_j}}{\sqrt{e_j' \Gamma V_d 
 \Gamma' e_j}}
 , A_{1n}, A_{2n}, A_{3n} \right) \\
 & \le & 
 P \left(  \frac{e_j' \Gamma \Sigma_{xz} \frac{W_d}{q}  \frac{Z'u}{n^{1/2}}}{\sqrt{e_j' \Gamma V_d \Gamma' e_j}} \le t(1+ \epsilon) + 2 D \epsilon \right),
 \end{eqnarray*}
Thus, as the right hand side of the above display does not depend on $\beta_0$ it follows from the asymptotic normality of $\frac{e_j' \Gamma \Sigma_{xz} W_d \frac{Z'u}{n^{1/2}}}{\sqrt{e_j' \Gamma V_d \Gamma' e_j}}$ that for $n$ sufficiently large
 \begin{eqnarray}
 && \sup_{ \beta_0 \in {\cal B}_{l_0}}
 P \left( \frac{e_j' \hat{\Gamma} \frac{X'Z}{nq} \hat{W}_d  \frac{Z'u}{n^{1/2}}}{\sqrt{e_j' \hat{\Gamma} \hat{V}_d \hat{\Gamma}' e_j}}
 - \frac{e_j' \Delta}{\sqrt{e_j' \hat{\Gamma} \hat{V}_d \hat{\Gamma}' e_j}} \le t, A_{1n}, A_{2n}, A_{3n} \right)\nonumber  \\
   & \le & 
    P \left(  \frac{e_j' \Gamma \Sigma_{xz} \frac{W_d}{q} \frac{Z'u}{n^{1/2}}}{\sqrt{e_j' \Gamma V_d \Gamma' e_j}} \le t(1+ \epsilon) + 2 D \epsilon \right) 
    \nonumber \\
    & \le & \Phi (t(1+ \epsilon) + 2 D \epsilon)+  \epsilon,\label{pt31}
 \end{eqnarray}
   Using the continuity of $q\mapsto \Phi(q)$ it follows that for any $\delta>0$ there exists a sufficiently small $\epsilon$ such that
\begin{align*}
 \sup_{ \beta_0 \in {\cal B}_{l_0}}
 P \left( \frac{e_j' \hat{\Gamma} \frac{X'Z}{nq} \hat{W}_d  \frac{Z'u}{n^{1/2}}}{\sqrt{e_j' \hat{\Gamma} \hat{V}_d \hat{\Gamma}' e_j}}
 - \frac{e_j' \Delta}{\sqrt{e_j' \hat{\Gamma} \hat{V}_d \hat{\Gamma}' e_j}} \le t, A_{1n}, A_{2n}, A_{3n} \right)
\leq
\Phi(t)+\delta+\epsilon
\end{align*}
 %Then we analyze by using $A_{1n}, A_{2n}, A_{3n}$ as  in the analysis above
%  \begin{eqnarray*}
%&& P \left( \frac{e_j' \hat{\Gamma} \frac{X'Z}{nq} \hat{W}_d  \frac{Z'u}{n^{1/2}}}{\sqrt{e_j' \hat{\Gamma} \hat{V}_d \hat{\Gamma}' e_j}}
% - \frac{e_j' \Delta_d}{\sqrt{e_j' \hat{\Gamma} \hat{V}_d \hat{\Gamma}' e_j}} \le t, A_{1n}, A_{2n}, A_{3n} \right) \\
% & \ge & 
% P \left( \frac{e_j' \Gamma \Sigma_{xz} W_d Z'u/n^{1/2}}{\sqrt{e_j' \Gamma V_d \Gamma' e_j }} \le t(1 -\epsilon) - 
% \frac{2 \epsilon}{\sqrt{e_j'  \Gamma V_d \Gamma' e_j }}, A_{1n}, A_{2n}, A_{3n}\right)    \\
% & \ge & 
%   P \left( \frac{e_j' \Gamma \Sigma_{xz} W_d Z'u/n^{1/2}}{\sqrt{e_j' \Gamma V_d \Gamma' e_j }} \le t(1 -\epsilon) - 2 D \epsilon\right)
%   + P ( \cap_{i=1}^3 A_{in} ) - 1.
%   \end{eqnarray*}   
   Following a similar reasoning as above one can also show that for any $\delta>0$ and $\epsilon>0$ sufficiently small
      \begin{eqnarray}
 && \inf_{ \beta_0 \in {\cal B}_{l_0}}
 P \left( \frac{e_j' \hat{\Gamma} \frac{X'Z}{nq} \hat{W}_d  \frac{Z'u}{n^{1/2}}}{\sqrt{e_j' \hat{\Gamma} \hat{V}_d \hat{\Gamma}' e_j}}
 - \frac{e_j' \Delta}{\sqrt{e_j' \hat{\Gamma} \hat{V}_d \hat{\Gamma}' e_j}} \le t, A_{1n}, A_{2n}, A_{3n} \right)
    \ge  \Phi (t) - 2 \epsilon- \delta.\label{pt32}
   \end{eqnarray}
From (\ref{pt31}) and (\ref{pt32}) it can be concluded that
   \[ \sup_{ \beta_0 \in {\cal B}_{l_0}} \left| P \left( \frac{n^{1/2} e_j' (\hat{b} - \beta_0)}{\sqrt{ e_j'  \hat{\Gamma} \hat{V}_d \hat{\Gamma}' e_j}}
   \le t \right) - \Phi (t) \right| \to 0.\]

Part (ii) can be established in a similar fashion as the proof of Theorem 3ii in \cite{ck18}.
   
We now turn to part (iii).
   \begin{eqnarray*}
   && n^{1/2} \sup_{\beta_0 \in {\cal B}_{l_0}} diam ( [ \hat{b}_j - z_{1 - \alpha/2} \frac{\hat{\sigma}_{bj}}{n^{1/2}}, \hat{b}_j
   + z_{1 - \alpha/2} \frac{\hat{\sigma}_{bj}}{n^{1/2 }}]) \\
   & = & \sup_{\beta_0 \in {\cal B}_{l_0}} 2 \hat{\sigma}_{bj} z_{1 - \alpha/2}   \\
   & = & 2 [  \sup_{\beta_0 \in {\cal B}_{l_0}} \sqrt{e_j' \Gamma V_d \Gamma' e_j} + o_p (1)] z_{1 - \alpha/2} \\
   & = & O_p (1),
   \end{eqnarray*}
   by Theorem \ref{thmcl1}(ii) for the second equality, and Assumption \ref{5cl} (ii) for the last equality.   
\end{proof}
. %As usual, (\ref{eq:dynpanmom}) could be replaced by, eg, $E [ u_{it} | \mu_i, y_i^{t-1}, x_i^{t-1}]=0,\ i=1,...,n,\ t=1,...,T,$ to take into account various time series structures in the $x_{it}$. This would, however, reduce the number of available moments.

\bibliographystyle{chicagoa}
\bibliography{refer1}

  \end{document}